\PassOptionsToPackage{cmyk}{xcolor}
\documentclass[11pt,twoside,a4paper,reqno]{amsart}
\usepackage{euscript}
\usepackage{a4wide}
\usepackage[T2A]{fontenc}
\usepackage[utf8]{inputenc}
\usepackage[english]{babel}
\usepackage{amsfonts}
\usepackage{amssymb, amsthm}
\usepackage{amsmath}
\usepackage{graphicx}
\usepackage{geometry}
\usepackage[inline]{enumitem}
\usepackage{changepage}

\usepackage[normalem]{ulem}  
\newcommand{\df}{\mathcal S}
\newcommand{\dfa}{\mathcal \df_{\alpha}}
\newcommand{\ea}{\mathcal \eps_{\alpha}}
\newcommand{\I}{\mathcal{I}}
\newcommand{\osc}{\mathcal O}
\numberwithin{equation}{section}
\RequirePackage{tikz}
\usetikzlibrary{shapes.geometric}
\usetikzlibrary {patterns}
\usepackage{pgfplots}
\pgfplotsset{compat=1.18}

\definecolor{myred}{rgb}{0.8500, 0.3250, 0.0980}

\definecolor{myblue}{rgb}{0, 0.4470, 0.7410}

\makeatletter
\newcommand{\leqnomode}{\tagsleft@true\let\veqno\@@leqno}
\setlength\@mathmargin{20pt}
\makeatother

\definecolor{webgreen}{rgb}{0,.5,0}
\definecolor{webbrown}{rgb}{.6,0,0}
\definecolor{RoyalBlue}{cmyk}{1, 0.50, 0, 0}

\usepackage[colorlinks=true, breaklinks=true, urlcolor=webbrown, linkcolor=RoyalBlue, citecolor=webgreen,backref=page]{hyperref}
\title{Krein systems with oscillating potentials}
\author{Pavel Gubkin}

\newtheorem{theo}{Theorem}[section]

\newtheorem{Thm}[theo]{Theorem}

\newtheorem{Lem}[theo]{Lemma}
\newtheorem{Prop}[theo]{Proposition}
\newtheorem{Cor}[theo]{Corollary}

\newtheorem{Rema}[theo]{Remark}

\renewcommand{\le}{\leqslant}
\renewcommand{\ge}{\geqslant}

\newcommand{\var}{\mathbf{D}}
\newcommand{\dm}{\mathop{d\mathrm{m}}\nolimits}
\newcommand{\mm}{\mathop{\mathrm{m}}\nolimits}

\def\norm[#1]{\left\| #1 \right\|}
\newcommand\smatrix[4]{\left( \begin{smallmatrix} #1 & #2 \\ #3 &  #4\end{smallmatrix} \right)}

\def\ls{\lesssim}
\def\gs{\gtrsim}
\newcommand{\Szego}{Szeg\H{o} }
\newcommand{\GrBell}{Gr\"{o}nwall–Bellman }
\newcommand{\bm}{\mathbf{m}}

\newcommand{\supp}{\mathop{\mathrm{supp}}\nolimits}

\newcommand{\trace}{\mathop{\mathrm{trace}}\nolimits}

\def\ol{\overline}
\renewcommand{\phi}{\varphi}
\newcommand{\eps}{\varepsilon}

\newcommand{\R}{\mathbb R}

\newcommand{\K}{\mathcal K}

\newcommand{\D}{\mathbb D}
\newcommand{\T}{\mathbb T}

\newcommand{\F}{\mathcal F}

\newcommand{\Cm}{\mathbb C}

\newcommand{\loc}{\rm{loc}}

\renewcommand{\Re}{\mathop{\rm Re}}
\renewcommand{\Im}{\mathop{\rm Im}}

\address{
	\begin{flushleft}
		Pavel Gubkin: gubkinpavel@pdmi.ras.ru, gubkin.pv@yandex.ru \\\vspace{0.1cm}
		St. Petersburg Department of Steklov Mathematical Institute\\
		Russian Academy of Sciences\\
		Fontanka 27, 191023 St. Petersburg, Russia\\\vspace{0.1cm}
		St. Petersburg State University \\
		Universitetskaya nab. 7-9, St. Petersburg, 199034, Russia
	\end{flushleft}
}

\thanks{The work is supported by the Russian Science Foundation grant RScF 23-11-00171, \url{https://rscf.ru/project/23-11-00171/},  and in part by the M\"obius Contest Foundation for Young Scientists.}

\subjclass{34L40}

\keywords{Krein system, Dirac operator, Entropy function, Oscillation}

\begin{document}
	
	\newgeometry{left=30mm,right=30mm, top = 20mm}
	\begin{abstract}
		We prove that mean decay of the coefficient of Krein system is equivalent to the mean decay of the Fourier transform of its \Szego function. 
	\end{abstract}
	
	\maketitle

	\section{Introduction}
	Let $a\in L^1_{\loc}(\R_+)$ be a complex-valued function on $\R_+ = [0,\infty)$. The Krein system with the coefficient $a$ is the following system of differential equations:
	\begin{align}\label{Krein system}
		\begin{cases}
			\frac{\partial}{\partial r}P(r,\lambda) = i\lambda P(r,\lambda) - \ol{a(r)}P_*(r,\lambda),&\quad \,\,P(0,\lambda) = 1,\\
			\frac{\partial}{\partial r}P_*(r,\lambda) = - a(r) P(r, \lambda),&\quad P_*(0,\lambda) = 1.
		\end{cases}
	\end{align}
	It was first introduced by M. Krein in \cite{Krein} and played an important role in the studies of the spectral theory of differential operators. Krein systems are often used for transferring ideas from the theory of orthogonal polynomials on the unit circle to the spectral theory of self-adjoint operators with simple spectrum. Many of the important results on the orthogonal polynomials have their counterparts in the language of Krein systems.  
	For instance, continuous versions of the Bernstein-\Szego approximations, Baxter's theorem, \Szego and strong \Szego theorems from the theory of orthogonal polynomials can be found in the survey \cite{Denisov2006} by S. Denisov among the key facts of the theory of Krein systems and spectral theory of Dirac operators, also see \cite{Denisov2003} for the continuous version of the Rakhmanov's theorem and \cite{Gubkin2021} for the ``continuous''  M\'{a}t\'{e}-Nevai-Totik theorem. 
	In the present paper we focus on another classical theorem describing probability measures with exponentially small recurrence coefficients -- the Nevai-Totik theorem \cite{Nevai1989} from 1989. The spectral version of Nevai-Totik theorem in the discrete situation (for Jacobi matrices) has been proved by D. Damanik and B. Simon in \cite{Damanik2006}. 
	The continuous setting remained open until recently. In \cite{Gub2024} we described the class of Dirac operators with exponentially decaying entropy in terms of corresponding spectral measures.  The main result of the present paper, see Theorem~\ref{thm: main equivalences of convergences} below, can be regarded as a continuous version of the Nevai-Totik theorem in the superexponentially decaying situation. To formulate it, we need to recall the definitions of some basic objects in the spectral theory of Krein systems. We will use \cite{Denisov2006} as a main reference.
	
	\medskip
	
	For any Krein system \eqref{Krein system} there exists a unique Borel measure $\sigma$ on the real line $\R$ such that $\int_{\R}(1 + x^2)^{-1}d\sigma(x) < \infty$
	and the mapping
	\begin{align}
		\label{isometry property of spectral measure}
		\mathcal{O}\colon f\mapsto \frac{1}{\sqrt{2\pi}}\int_0^{\infty}f(r)P(r,\lambda)\,dr
	\end{align}
	is a densely defined isometry between the spaces $L^2(\R_+)$ and $L^2(\R, \sigma)$. This measure is called the spectral measure of \eqref{Krein system}. If $a\in L^2(\R_+)$ then $\sigma$ belongs to the \Szego class on~$\R$. The latter means $\int_\R\frac{|\log w(x)|}{1 + x^2}\,dx < \infty$, where $w$ is the density of $\sigma$ with respect to the Lebesgue measure on $\R$. In this case the function
	\begin{align*}
		\Pi(\lambda) = \exp\left[ -\frac{1}{2\pi i}\int\limits_{-\infty}^{\infty}\left(\frac{1}{s - \lambda} - \frac{s}{s^2 + 1}\right)\log w\, ds\right],\quad\lambda\in\Cm_+
	\end{align*}
	is outer in $\Cm_+=\{\lambda\colon \Im\lambda > 0\}$, satisfies $\Pi(i) > 0$ and  $|\Pi(x)|^{-2} =w(x)$ for Lebesgue almost all $x\in \R$, see Section 4 in \cite{Garnett}. The function $\Pi$ is called  the inverse \Szego function of system~\eqref{Krein system}.
	
	Given a function $a$, one can consider  Krein systems with the coefficients $a_r\colon x\mapsto a(x + r)$ for every $r\ge 0$. Denote the corresponding  spectral measures by $\sigma_r$ and let $w_r$ be their densities with respect to the Lebesgue measure on $\R$. The entropy function of $a$ is defined~by
	\begin{gather}
		\label{def: measure entropy}
		\mathcal{K}_a(r) = \log \left(\frac{1}{\pi}\int_\R\frac{d\sigma_r(x)}{x^2 + 1}\right) - \frac{1}{\pi}\int_\R\frac{\log w_r(x)}{x^2 + 1}\,d x.
	\end{gather}
	If $\sigma$ belongs to the \Szego class then so does $\sigma_r$ for every $r\ge 0$, see \cite{Bessonov2021}. This means that $\K_a$ is well-defined (the integrals in \eqref{def: measure entropy} converge) at least for $a\in L^2(\R_+)$. It is known, see Lemma 2.3 in \cite{Bessonov2021} that $\K_a(r)\to 0$ as $r\to\infty$.
	\medskip

	\noindent\textbf{Notation.}
	We will use the notation $\ls$ and $\gs$ meaning that the corresponding inequality $\le$ or $\ge$ holds with some multiplicative constant. 
	We will use the symbol $\approx$ when both $\ls$ and $\gs$ hold. Given a function $f$ on $\R_+$ and $\alpha > 1$, we will write $f(r) = \eps_{\alpha}(r)$ if for some $c > 0$ we have $|f(r)|\ls e^{-c r^{\alpha}}$. The equality $f(r) = \eps_1(r)$ will be used when $f$ is superexponentially decaying, i.e., when for every $\delta > 0$ we have $|f(r)|\ls e^{-\delta r}$. 
	\medskip
	
	\noindent\textbf{Oscillating potentials.} For $\alpha\ge 1$ consider the following subspace $\osc_\alpha$ of $L^2(\R_+)$:
	\begin{gather*}
		\osc_{\alpha} = \left\{f\in L^2(\R_+)\colon \int_0^{\infty}f(x)\,dx\text{ converges and } \int_r^{\infty}f(x)\,dx = \eps_{\alpha}(r)\right\}.
	\end{gather*}
	The assertion $\int_r^{\infty}f(x)\,dx = \eps_{\alpha}(r)$ evidently holds when $f$ has compact support or when $f(r) = \eps_{\alpha}(r)$. It also holds for a wider class of rapidly oscillating functions of relatively weak decay, see Figure \ref{fig: oscillating potential}.
	\input{oscillating_potential_image}
	
	\medskip

	\noindent\textbf{Functions with the decaying Fourier transform.} Let us introduce the class 
	\begin{gather}
		\label{def: decaying Fourier}
		\dfa = \left\{f\in L^2(\R)\colon \supp\left(\F f\right)\subset \R_+ \text{ and } \int_r^{\infty}|(\F f)(\xi)|^2d\xi = \eps_{\alpha}(r)\right\}
	\end{gather} of the $L^2$ functions with the decaying Fourier transform. Here $\F$ stands for the isometric on $L^2(\R)$ Fourier transform initially defined on simple functions by
	\begin{gather*}
		(\mathcal{F}f)(\xi) = \frac{1}{\sqrt{2\pi}}\int_{\R}e^{-it\xi}f(t)\,dt.
	\end{gather*}
	As we will show in Lemma \ref{Lem: salpha description}, the class $\df_{\alpha}$ consists of entire functions.
	The following theorem is the main result of the present paper.
	\bigskip
	
	\begin{Thm}\label{thm: main equivalences of convergences}
		Consider Krein system \eqref{Krein system} with the coefficient $a \in L^2(\R_+)$. For every $\alpha \ge 1$, the  following assertions are equivalent:
		\begin{center}
			\begin{enumerate*}[itemjoin={\quad\quad\quad\quad\;\,}, label=\textsc{(}\textbf{\textsc{\Alph*}}\textsc{)}]
				\item\label{cond: main thm 1} $a\in\osc_{\alpha}$\textup{;}
				\item\label{cond: main thm 2} $\sigma$ is a.\,c. and  $\frac{\Pi - \Pi(i)}{x - i}\in \df_{\alpha}$\textup{;}
				\item\label{cond: main thm C} $\K_a(r)=\eps_\alpha(r)$\textup{.}
			\end{enumerate*}
		\end{center}
		Moreover, if $\alpha > 1$ and $a\not\equiv 0$ in $L^2(\R_+)$ then the above assertions are also equivalent to 
		\begin{enumerate}[label=\textsc{(}\textbf{\textsc{\Alph*}}\textsc{)}]
			\setcounter{enumi}{3}
			\item for some $z_0\in \Cm_+$ we have  $P(r, z_0) = \eps_{\alpha}(r)$.\label{cond: main thm 3}
		\end{enumerate}
	\end{Thm}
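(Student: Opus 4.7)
The plan is to establish a cycle $(A)\Rightarrow (C)\Rightarrow (B)\Rightarrow (A)$ for the first three conditions, and then to treat $(A)\Leftrightarrow (D)$ separately in the case $\alpha>1$, $a\not\equiv 0$. The central bridge between~(B) and~(C) is the identity
\[
e^{\K_a(r)} - 1 = \left\|\tfrac{\Pi_r(i)}{\Pi_r(x)} - 1\right\|^2_{L^2(\nu)},\qquad d\nu = \tfrac{dx}{\pi(x^2+1)},
\]
valid whenever $\sigma_r$ is absolutely continuous; this is proved by splitting $|\Pi_r(i)/\Pi_r|^2 = |f+1|^2$ with $f := \Pi_r(i)/\Pi_r - 1$ and using that $F := f+1$ is outer in $\Cm_+$ with $F(i)=1$, so that $\int f\,d\nu = F(i)-1 = 0$ by the Poisson formula. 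Hence $\K_a(r)$ being small is equivalent to $L^2(\nu)$-closeness of $\Pi_r(i)/\Pi_r$ to $1$.

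For $(C)\Rightarrow (B)$, the convergence $\K_a(r)\to 0$ forces absolute continuity of $\sigma$ (see \cite{Bessonov2021}), so $\Pi_r$ is well defined for all $r$. The identity above, combined with the Krein-system transport of $\Pi$ to $\Pi_r$ --- which on the Fourier side amounts to a translation by $r$ in the outer component of $\Pi$ --- converts the $\eps_\alpha$-decay of $\K_a(r)$ into the $\eps_\alpha$-decay of $\|\F g\|_{L^2(r,\infty)}$, where $g = (\Pi-\Pi(i))/(x-i)$; this is exactly the condition $g\in\df_\alpha$. The computation uses Plancherel together with the fact that $g$ is the boundary value of an $H^2(\Cm_+)$ function, so that $\supp(\F g)\subset\R_+$ is automatic.

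For $(A)\Rightarrow (C)$, the naive tail version of the continuous Krein--\Szego theorem, $\K_a(r)\lesssim\|a\|^2_{L^2(r,\infty)}$, is too weak since an oscillating $a\in\osc_\alpha$ may have arbitrarily slow $L^2$-tails. Instead I would expand $\K_a(r)$ about the shifted basepoint and extract the coherent contribution $|A(r)|^2$ via integration by parts against the antiderivative $A(r) = \int_r^\infty a(s)\,ds = \eps_\alpha(r)$, absorbing the remaining oscillatory terms by further integrations by parts against $A$. For $(B)\Rightarrow (A)$, the inverse spectral map (a Krein--Gelfand--Levitan-type integral equation) expresses $a$ as a derivative of a quantity built linearly from $\F^{-1}g$, so that $\int_r^\infty a$ inherits the $\eps_\alpha$ Fourier-tail decay of $g$, giving $a\in\osc_\alpha$. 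For $(A)\Leftrightarrow (D)$ with $\alpha>1$, the variation-of-parameters formula
\[
P(r,z_0) = e^{iz_0 r} - \int_0^r e^{iz_0(r-s)}\overline{a(s)}\,P_*(s,z_0)\,ds,
\]
combined with a Gronwall bound on $P_*$ and integration by parts against $A$, yields $P(r,z_0) = \eps_\alpha(r)$ for a specific $z_0\in\Cm_+$ chosen so that the leading boundary terms cancel against $e^{iz_0 r}$; such a $z_0$ exists only if $a\not\equiv 0$, and the restriction $\alpha>1$ is needed because for $\alpha=1$ the best decay available for $P(r,z_0)$ is merely exponential, never super-exponential.

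The main obstacle is the quantitative inversion step in $(B)\Rightarrow (A)$: one has to pass from $L^2$-Fourier-tail decay of $g$ to tail-\emph{integral} decay of $a$ (not to pointwise decay of $a$), preserving the oscillation/Hilbert-transform structure characteristic of $\osc_\alpha$. This requires a careful commutation of the Krein integral equation with the antiderivative operator and is the technical heart of the proof; the sharpening of the \Szego identity needed in $(A)\Rightarrow (C)$ is a secondary but non-trivial point.
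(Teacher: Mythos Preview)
Your entropy identity $e^{\K_a(r)} - 1 = \|\Pi_r(i)/\Pi_r - 1\|^2_{L^2(\nu)}$ is correct and a nice observation, but the subsequent steps contain genuine gaps.

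The most serious problem is the claim that passing from $\Pi$ to $\Pi_r$ ``on the Fourier side amounts to a translation by $r$.'' This is false: the relation between $\Pi$ and $\Pi_r$ is multiplicative (through $P_*(r,\cdot)$), not a shift of $\F g$, so there is no direct route from decay of $\K_a(r)$ to decay of $\|\F g\|_{L^2(r,\infty)}$ for $g = (\Pi-\Pi(i))/(x-i)$. The paper does not use your identity for $(C)\Leftrightarrow(B)$ at all; it routes $(C)$ through $(A)$ via the local determinantal entropy $E_a$ and the variation $D_a$ (Theorems~\ref{thm: bessonov, denisov, entropy}, \ref{thm: entropy and variation}, \ref{thm: osc var}), and links $(A)$ to $(B)$ separately, relying on \cite{Gub2024} for the $\alpha=1$ case.

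For $(B)\Rightarrow(A)$ with $\alpha>1$, your appeal to a Gelfand--Levitan inversion is too vague to count as a proof sketch; no mechanism is given for why the oscillation structure of $\osc_\alpha$ (tail-integral decay, not pointwise decay) emerges. The paper's route is entirely different: $(B)$ forces $\Pi$ entire of finite order (Lemma~\ref{Lem: salpha description}), hence $\Pi$ has a zero $\bar z_0$ with $z_0\in\Cm_+$ unless $a\equiv 0$ (Lemma~\ref{Prop Szego function has a zero}); then the truncated Fourier integral of $\Pi/(z-\bar z_0)$ is used as a trial function in the Christoffel minimization $\bm_r(z_0)$ of \eqref{definition of the minimization function for Krein system}, yielding $\int_r^\infty |P(s,z_0)|^2\,ds = \eps_\alpha(r)$ (Theorem~\ref{thm: from szego function}). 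This gives $(D)$, and then $(D)\Rightarrow(A)$ via Proposition~\ref{prop: from small p}. The Christoffel-function trial-function argument is the technical heart and is absent from your outline.

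Finally, in your $(A)\Rightarrow(D)$ the point $z_0$ is not ``chosen so that boundary terms cancel against $e^{iz_0r}$'': it must be the reflection of a \emph{zero} of the entire extension of $\Pi$. That such a zero exists is itself a separate argument (finite order, so if $\Pi$ were zero-free it would be $e^{\text{polynomial}}$, and boundedness in $\Cm_+$ then forces $\Pi$ constant and $a\equiv 0$). With $\Pi(\bar z_0)=0$ in hand, the reflection formula \eqref{Krein system reflection formula} and the convergence $P_*(r,\bar z_0)\to\Pi(\bar z_0)=0$ at rate $\eps_\alpha(r)$ (Proposition~\ref{prop: conv of p star and p}) give $P(r,z_0)=\eps_\alpha(r)$ directly; no variation-of-parameters cancellation is needed.
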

	\noindent    Let us give some additional remarks: we can change the point $i$ in assertion \ref{cond: main thm 2} to an arbitrary $z_0\in\Cm_+$, namely, in Proposition \ref{prop: salpha does not depend on z_0} we show that \ref{cond: main thm 2} is equivalent to  
	
	\begin{minipage}{7in}
		\begin{equation}\leqnomode
			\tag{\textbf{\textsc{B'}}} \label{cond 2'}
			\hspace{-3cm}\sigma \text{ is a.\,c. and  for some } z_0\in \Cm_+ \text{ we have } \frac{\Pi - \Pi(z_0)}{x - z_0}\in \df_{\alpha}; 
		\end{equation}
	\end{minipage}\\
	when $\alpha = 1$, the implication \ref{cond: main thm 3} $\Longrightarrow$ \ref{cond: main thm 1} still holds, see Proposition \ref{prop: from small p}, however the converse may fail; points $z_0$ satisfying assertion \ref{cond: main thm 3} are exactly complex conjugate of resonances of the corresponding Dirac operator, see Section \ref{section: entropy function}.

	It is widely known that the oscillation may compensate the growth of the potential and lead to the properties typical to the properties of decreasing potentials, see \cite{Matveev1972}, \cite{Skriganov1973}, \cite{Sasaki2007} and  Appendix $2$ to XI.$8$ in \cite{ReedSimon3}. The novelty of Theorem \ref{thm: main equivalences of convergences} is implication \ref{cond: main thm 2} $\Longrightarrow$ \ref{cond: main thm 1} which allows to estimate the mean decay of the potential in terms of its spectral data; in comparison with the results from \cite{Gub2024}, Theorem \ref{thm: main equivalences of convergences} has a more explicit condition for the coefficient $a$. Description for the class of compactly supported $L^2$ potentials in terms of \Szego functions was established in the paper \cite{Korotyaev2021} by E. Korotyaev, similar result for the Schr\"{o}dinger operator is proved in \cite{Baranov2015DeBF} by A. Baranov, Y. Belov, and A. Poltoratski. Spectral properties of superexponentially decaying potentials were studied in \cite{Froese1997AsymptoticDO}, \cite{Hitrik1999BoundsOS}.
	\medskip
	
	\subsection{Structure of the paper.} In Section \ref{section: krein systems} we give the necessary background on the theory of Krein systems. Section \ref{section: proof of the main theorem} is devoted to the proof of Theorem \ref{thm: main equivalences of convergences}, in Section \ref{section: second theorem} various estimates of the entropy function are established. In the next section we discuss the Nevai-Totik theorem from the theory of orthogonal polynomials and its relation to Theorem~ \ref{thm: main equivalences of convergences}. 
	
	\subsection{Acknowledgements}
	I would like to thank Roman Bessonov for helpful discussions and comments on the manuscript.
	
	\section{Orthogonal polynomials on the unit circle}\label{section: othogonal polynomiasl}
	
	\subsection{Basics of the theory} 
	Let us introduce all the necessary concepts from the theory of orthogonal polynomials on the unit circle to formulate the Nevai-Totik theorem. We refer to the book \cite{Simon} by B. Simon for the general background on the theory.
	\medskip
	
	Let $\D=\{\omega\colon |\omega| < 1\}$ be a unit disk in the complex plane and $\T = \partial \D$ be the unit circle. Consider a probability measure $\mu$ on $\T$ which support is not a finite set, such measures are called nontrivial.
	The functions $\{z^n\}_{n\ge 0}$ are linearly independent in  $L^2(\T, \mu)$ and by the Gram-Schmidt orthogonalization procedure, we can construct the sequence $\{\Phi_n\}_{n\ge 0}$ of monic polynomials orthogonal in $L^2(\T, \mu)$. There are complex numbers $\alpha_n\in \D$ such that for $z\in \Cm$ we have
	\begin{gather}
		\label{eq: rec relation 1}
		\Phi_{n + 1}(z) = z\Phi_n(z) - \ol{\alpha_n}  \Phi_n^*(z),
		\\
		\label{eq: rec relation 2}
		\Phi_{n + 1}^*(z) = \Phi_n^*(z) - \alpha_n z \Phi_n(z),
	\end{gather}
	where $\Phi_n^*(z) = z^n\ol{\Phi_n(1/\ol{z})}$. These numbers are called the recurrence coefficients corresponding to $\mu$. The \Szego theorem states that $\sum_{n\ge 0}|\alpha_n|^2 < \infty$ if and only if $\mu$ belongs to the \Szego class on $\T$, i.e., $\displaystyle\log \mu' \in L^1(\T)$, where $\mu'$ is the density of $\mu$ with respect to the normalized Lebesgue measure $\mm$ on $\T$. In this situation there exists an outer function $\Pi$ in $\D$ such that $\Pi(0) > 0$ and ${|\Pi(\zeta)|^{-2}=\mu'(\zeta)}$ for almost every $\zeta\in \T$. The function $\Pi$ is called the inverse \Szego function of $\mu$. Theorem 2.3.5 in \cite{Simon} states that, for all $z\in \D$, $\Pi$ satisfies the limit relation
	\begin{gather}
		\label{eq: limit of phi_n^* in D}
		\lim_{n\to\infty} \Phi_n^*(z) = \Pi(z) / \Pi(0).
	\end{gather}
	
	\subsection{Nevai-Totik theorem}  
	If $\mu = \mu'\dm$ is an a.\,c.\ measure from the \Szego class on the unit circle let $r_{\Pi}$ denote the radius of convergence of Taylor series of $\Pi$ with center at $0$. Otherwise set $r_{\Pi} = 1$. Nevai-Totik theorem, see Theorem 1 in the original paper by P. Nevai and V. Totik \cite{Nevai1989} or  Chapter 7 in \cite{Simon}, states 
	\begin{gather*}
		r_{\Pi}^{-1} = \limsup_{n\to\infty} |\alpha_n|^{1/n}.
	\end{gather*}
	
	When $r_{\Pi} = +\infty$, i.e., when $\mu$ is a.\,c. from the \Szego class and $\Pi$ is entire, Nevai-Totik theorem gives  $\limsup_{n\to\infty} |\alpha_n|^{1/n} = 0$. In the next theorem we show that the order of $\Pi$ can also be calculated in terms of the sequence $\alpha_n$. Theorem \ref{thm: main equivalences of convergences} can be considered as a version of this theorem for Krein systems.
	\begin{Thm} \label{thm: MNT with order}
		The following assertions are equivalent
		\begin{enumerate}
			\item the series $\sum_{n\ge 0}\alpha_n z^n$ defines an entire function of order $\rho$;
			\item $\mu$ is a.\,c.\, measure from the \Szego class and $\Pi$ has an entire extension of order $\rho$.
		\end{enumerate}
	\end{Thm}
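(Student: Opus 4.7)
The starting point is the classical coefficient formula for the order of an entire function $f(z) = \sum_{n \ge 0} c_n z^n$:
\begin{equation*}
\rho(f) = \limsup_{n \to \infty} \frac{n \log n}{\log(1/|c_n|)}.
\end{equation*}
Hence the theorem reduces to a coefficient comparison between $\{|\alpha_n|\}_{n\ge 0}$ and the Taylor coefficients $\{|\pi_n|\}_{n\ge 0}$ of the normalized inverse \Szego function $\Pi(z)/\Pi(0) = \sum_{n \ge 0} \pi_n z^n$.

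The structural part is short. By the original Nevai-Totik theorem, the series $\sum_n \alpha_n z^n$ has infinite radius of convergence if and only if $\Pi$ admits an entire extension; moreover, under either hypothesis $|\alpha_n|$ decays faster than any geometric rate, so $\sum_n |\alpha_n| < \infty$, and Baxter's theorem then forces $\mu$ to be absolutely continuous with $\mu' \in A(\mathbb{T})$ strictly positive. Thus the a.c./\Szego-class requirement in (2) is automatic, and it only remains to show that the orders of $A(z) := \sum_n \alpha_n z^n$ and of $\Pi$ coincide.

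The core step is to establish a quantitative refinement of Nevai-Totik of the form
\begin{equation*}
\alpha_n = -\overline{\pi_{n+1}} + E_n,
\end{equation*}
with a remainder $E_n$ whose decay is faster than $|\pi_{n+1}|$ in the sense of the order formula. I would derive this by telescoping the \Szego recursion \eqref{eq: rec relation 1}--\eqref{eq: rec relation 2}, starting from \eqref{eq: limit of phi_n^* in D} which, once upgraded (via Vitali's theorem together with the entireness of $\Pi$) to uniform convergence on every compact in $\mathbb{C}$, yields
\begin{equation*}
\Pi(z)/\Pi(0) - \Phi_N^*(z) = -\sum_{k \ge N} \alpha_k\, z\, \Phi_k(z).
\end{equation*}
Combining this identity with the reversal $\Phi_k(z) = z^k \overline{\Phi_k^*(1/\bar z)}$ and extracting the $(n{+}1)$-st Taylor coefficient produces a contour-integral representation of $\alpha_n$ in terms of $\Pi$; optimizing the contour radius $R$ against the growth bound $\log M(R,\Pi) = R^{\rho + o(1)}$ delivers $\rho(A) \le \rho(\Pi)$. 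Inverting the same recursion to express $\pi_n$ in terms of the tail $\{\alpha_k\}_{k \ge n-1}$ yields the opposite inequality $\rho(\Pi) \le \rho(A)$.

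The main obstacle is precisely this quantitative refinement of the classical Nevai-Totik comparison: the standard argument applies in the exponential regime, where $|\alpha_n|$ and $|\pi_n|$ decay at a common geometric rate, whereas here both sequences decay faster than any geometric rate and the relevant comparison must be tracked on a logarithmic scale. Once the sharp two-sided coefficient bounds are in place, the equality of orders is immediate from the coefficient formula, and the theorem follows.
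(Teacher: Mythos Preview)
Your plan is viable in outline and matches the paper in the easier direction, but the harder direction is only sketched and contains a genuine gap.

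\medskip

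\textbf{The direction $\rho_\Pi \le \rho_A$.} Your telescoping identity $\Pi/\Pi(0) - \Phi_N^* = -\sum_{k\ge N}\alpha_k\,z\Phi_k$ combined with the uniform bound $\sup_{|\zeta|=1}|\Phi_k(\zeta)|\le \prod_j(1+|\alpha_j|)<\infty$ and the maximum principle gives $|\Pi(z)|\lesssim \sum_k|\alpha_k||z|^{k+1}$, from which $\rho_\Pi\le\rho_A$ is immediate. This is exactly the paper's argument.

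\medskip

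\textbf{The direction $\rho_A \le \rho_\Pi$.} Here your proposal stalls. Extracting the $(n+1)$-st Taylor coefficient from the telescoped identity does \emph{not} give a contour-integral formula for $\alpha_n$ in terms of $\Pi$ alone: since $\Phi_k(z)=z^k\overline{\Phi_k^*(1/\bar z)}$, the coefficient of $z^{n+1}$ on the right is $-\alpha_n-\sum_{k>n}\alpha_k\,\overline{c_{k,k-n}}$ where $c_{k,j}$ are Taylor coefficients of $\Phi_k^*$. So you obtain $\alpha_n = -\pi_{n+1}-\sum_{k>n}\alpha_k\,\overline{c_{k,k-n}}$, which is circular as stated. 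This \emph{can} be made to work by a bootstrap---using the a priori super-geometric decay of $\alpha_k$ from Nevai--Totik together with $|c_{k,j}|\le C$ one gets $|\alpha_n|\le |\pi_{n+1}|+C\sum_{k>n}|\alpha_k|$, and iterating yields $\sum_{k\ge n}|\alpha_k|\le \sum_{j\ge 0}(C+1)^j|\pi_{n+1+j}|$, which has the right order-scale decay---but none of this is in the proposal, and your phrase ``optimizing the contour radius'' does not describe this mechanism.

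\medskip

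\textbf{What the paper does instead.} For $\rho_A\le\rho_\Pi$ the paper bypasses the coefficient recursion entirely and uses the Christoffel function $\lambda_n(0)=\inf\{\|P\|^2_{L^2(\mu)}/|P(0)|^2:P\in\mathcal P_n\}$. One has $\lambda_n(0)=\prod_{k<n}(1-|\alpha_k|^2)$ and $\lambda_\infty(0)=|\Pi(0)|^{-2}$. Plugging the $n$-th Taylor polynomial $h_n$ of $\Pi$ as a test polynomial and using $d\mu=|\Pi|^{-2}d\mathrm m$ gives, after the key observation that $\int_{\mathbb T}\Re\bigl((h_n-\Pi)/\Pi\bigr)\,d\mathrm m=0$, the bound $\lambda_n(0)-\lambda_\infty(0)\lesssim\|h_n-\Pi\|^2_{L^\infty(\mathbb T)}\lesssim e^{-2n\log n/(\rho+\varepsilon)}$. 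Since the left side is $\gtrsim|\alpha_n|^2$, the conclusion follows at once. This variational argument is shorter, avoids circularity, and is the template for the paper's main Krein-system theorem via the continuous Christoffel function $\mathbf m_r$.

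\medskip

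In summary: your $\rho_\Pi\le\rho_A$ half is correct and identical to the paper's; your $\rho_A\le\rho_\Pi$ half can be repaired with a tail-sum bootstrap but as written does not go through, and the paper's Christoffel-function approach is both cleaner and more in line with the methods used later.
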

	\begin{proof}
		The proof is based on the relation between the order of the entire function and the asymptotic behaviour of its Taylor coefficients. Namely, let $f = \sum_{n\ge 0}f_nz^n$ be an entire function then, see Lecture 1 in \cite{Levin},  its order $\rho(f)$ can be calculated by the formula 
		\begin{gather}\label{eq: order of the entire function}
			\rho(f) = \limsup_{n\to\infty}\frac{n\ln n}{-\ln |f_n|}.
		\end{gather}
		By the Nevai-Totik theorem we already know that $\sum_{n\ge 0}\alpha_n z^n$ and $\Pi$ are entire simultaneously. Hence further we can assume that both $\sum_{n\ge 0}\alpha_n z^n$ and $\Pi=\sum_{n\ge 0}c_n z^n$ are entire and that $\mu$ is a.\,c.\,from the \Szego class. Let us show that the orders $\rho_{\alpha}$ and $\rho_{\Pi}$ of these functions are equal.
		
		First, we prove $\rho_{\Pi}\ge \rho_{\alpha}$. In the light of \eqref{eq: order of the entire function}, we need to show
		\begin{gather}\label{eq: ineq order of alpha}
			\rho = \rho_{\Pi}= \limsup_{n\to\infty} \frac{n\ln n}{-\ln |c_n|} \ge \limsup_{n\to\infty} \frac{n\ln n}{-\ln |\alpha_n|} = \rho_{\alpha}.
		\end{gather}
		If $\rho_{\Pi} = +\infty$ this inequality is trivial and below we work with the case of finite $\rho_{\Pi}$.
		Let $\mathcal{P}_n$ be the set of polynomials of degree not greater than $n$. Consider the minimization Christoffel function 
		\begin{gather}
			\label{eq: OPUC christoffel function}
			\lambda_n (z) = \lambda_n (\mu, z) = \inf\left\{\frac{\norm[P]^2_{L^2(\T, \mu)}}{|P(z)|^2}\colon P\in \mathcal{P}_n,\; P(z)\neq 0 \right\}, \quad z\in \Cm.
		\end{gather}
		For the Christoffel function we have, see Chapter 2.2 in \cite{Simon},
		\begin{gather}
			\label{lambda n formula}
			\lambda_n(0) = \prod\limits_{k = 0}^{n-1}(1 - |\alpha_k|^2),
			\qquad
			\lambda_{\infty}(0) = \inf_n\lambda_n(0)  = |\Pi(0)|^{-2}.
		\end{gather}
		Let $h_n = \sum_{k\ge 0}^{n} c_k z^k$ be the $n$-th Taylor polynomial of $\Pi$. We have $h_n\in \mathcal{P}_n$, $h_n(0) = \Pi(0)$ and $d\mu(\zeta) = |\Pi(\zeta)|^{-2}\dm$ hence
		\begin{align*}
			\lambda_n(0) &\le \frac{\norm[h_n]^2_{L^2(\T, \mu)}}{|h_n(0)|^2}
			= \frac{1}{|h_n(0)|^2}\int_{\T} \left|h_n(\zeta)\right|^2d\mu(\zeta)
			\\
			&= \frac{1}{|\Pi(0)|^2}\int_{\T} \left|h_n(\zeta)\Pi^{-1}(\zeta)\right|^2\dm(\zeta)  = \frac{1}{|\Pi(0)|^2}\int_{\T} \left|1 + \frac{h_n(\zeta) - \Pi(\zeta)}{\Pi(\zeta)}\right|^2\dm(\zeta) 
			\\
			&=\frac{1}{|\Pi(0)|^2}\int_{\T} 1 + 2\Re\left(\frac{h_n(\zeta) - \Pi(\zeta)}{\Pi(\zeta)}\right) + \left|\frac{h_n(\zeta) - \Pi(\zeta)}{\Pi(\zeta)}\right|^2\dm(\zeta).
		\end{align*}
		The function $\displaystyle \frac{h_n - \Pi}{\Pi}$ is analytic in $\D$, therefore the second term vanishes after the integration.  This implies
		\begin{align}\label{eq: lambda tmp}
			\lambda_n(0)\le \frac{1}{|\Pi(0)|^2}\int_{\T} 1 &+  \left|\frac{h_n(\zeta) - \Pi(\zeta)}{\Pi(\zeta)}\right|^2\dm(\zeta) = \lambda_{\infty}(0)  + \int_{\T}  \left|\frac{h_n(\zeta) - \Pi(\zeta)}{\Pi(\zeta)}\right|^2\dm(\zeta).
		\end{align}
		For $\zeta\in \T$ we can write the uniform bound $\left|\Pi(\zeta) - h_n(\zeta)\right| \le \sum_{m > n}|c_m|$. Formula \eqref{eq: order of the entire function} for $\Pi$ implies that for every $\eps > 0$ and large $n$ the inequality $\rho + \eps \ge n\ln n / (-\ln |c_n|)$ holds. This is equivalent to $c_n \le \exp\left({-\frac{n\ln n }{\rho + \eps}}\right)$. Therefore we have  
		\begin{gather*}
			\left|\Pi(\zeta) - h_n(\zeta)\right| \le \sum_{m > n}|c_m| \le \sum_{m > n} e^{-\frac{m\ln m }{\rho + \eps}} \ls e^{-\frac{n\ln n }{\rho + \eps}}.
		\end{gather*}
		Moreover, $\Pi$ is separated from $0$ on $\T$, otherwise the assertion $\mu(\T) < \infty$ would fail. This means that the integral in \eqref{eq: lambda tmp} is $O\left(e^{-\frac{2n\ln n }{\rho + \eps}}\right)$ as $n\to\infty$. Then the relations in \eqref{lambda n formula} give
		\begin{gather*}
			\prod\limits_{k = 0}^{n-1}(1 - |\alpha_k|^2) \le \prod\limits_{k = 0}^{\infty}(1 - |\alpha_k|^2) + O\left(e^{-\frac{2n\ln n }{\rho + \eps}}\right),\quad n\to\infty.
		\end{gather*}
		Therefore $|\alpha_n| =O\left(e^{-\frac{n\ln n }{\rho + \eps}}\right) $ as $n\to\infty$ and \eqref{eq: ineq order of alpha} follows. This proves the inequality $\rho_{\Pi}\ge \rho_{\alpha}$
		\medskip
		
		The proof of the $\rho_{\alpha}\ge \rho_{\Pi}$ is simpler and uses the same argument as in the proof of Theorem 1.1 from \cite{Simon2006}.
		For $\zeta \in \T$ we have 
		$|\Phi_n^*(\zeta)| = |\zeta^n\ol{\Phi_n(1/\ol{\zeta})}| = |\Phi_n(\zeta)|$ hence \eqref{eq: rec relation 1} implies
		\begin{gather*}
			|\Phi_{n + 1}(\zeta)| \le |\zeta\Phi_n(\zeta)| + |\ol{\alpha_n}  \Phi_n^*(\zeta)| = (1 + |\alpha_n|)|\Phi_n(\zeta)|.
		\end{gather*}
		Inductively we deduce  
		\begin{gather*}
			|\Phi_n^*(\zeta)| = |\Phi_n(\zeta)| \le \prod_{k = 0}^{n-1} (1 + |\alpha_k|) < \infty, \quad \zeta\in \T.
		\end{gather*}
		Therefore $z^{-n}\Phi_n$ is bounded on $\T$ uniformly in $n$. All $\Phi_n$ are monic hence $z^{-n}\Phi_n(z) = 1 + o(1)$ as $|z|\to\infty$. Now maximum modulus principle implies boundedness of $z^{-n}\Phi_n$ in the domain $\Cm\setminus\D$. Therefore, by \eqref{eq: rec relation 2} we get
		\begin{gather*}
			\sum_{n = 0}^{\infty}|\Phi_{n + 1}^*(z) - \Phi_n^*(z)| = \sum_{n = 0}^{\infty}|z\alpha_n\Phi_n(z)| \ls \sum_{n = 0}^{\infty}|z^{n + 1}\alpha_n|.
		\end{gather*}
		In particular, this means that $\Phi_n^*$ converge on the compact subsets of $\Cm$ and \eqref{eq: limit of phi_n^* in D} holds for every $z\in\Cm$.
		Moreover, this gives the estimate $|\Pi(z)|\ls \sum_{n = 0}^{\infty}|z^{n + 1}\alpha_n|$. The inequality $\rho_{\Pi}\le \rho_{\alpha}$ follows and the proof is concluded.
	\end{proof}
	
	\section{Krein systems}\label{section: krein systems}
	Consider the Krein system \eqref{Krein system}, let $\sigma$ be its spectral measure and $\Pi$ be the corresponding inverse \Szego function. A simple calculation shows that for $r\ge 0$ and $z\in \Cm$ we have
	\begin{gather}
		\label{Krein system reflection formula}
		P(r, z) = e^{i z r}\ol{P_*(r,\ol{ z})},\qquad P_*(r, z) = e^{i z r}\ol{P(r,\ol{ z})}.
	\end{gather}
	Furthermore, for every $\lambda,\mu\in\Cm$, the functions $P$, $P_*$ satisfy the Christoffel-Darboux formula
	\begin{gather}
		\label{first CD formula}
		P(r,\lambda)\ol{P(r,\mu)} - P_*(r,\lambda)\ol{P_*(r,\mu)} = i(\lambda - \ol{\mu})\int_0^r P(s,\lambda)\ol{P(s,\mu)}\,ds,
	\end{gather}
	which is also proved by a straightforward calculation, see Lemma 3.6 in \cite{Denisov2006}. If we let $\mu = \lambda$ then this becomes
	\begin{gather}
		\label{second CD formula}
		|P_*(r,\lambda)|^2 - |P(r,\lambda)|^2 = 2\Im \lambda \int_0^r |P(s,\lambda)|^2\, ds.
	\end{gather}
	Krein theorem, see Section 8 in \cite{Denisov2006} or Section 3 in \cite{Teplyaev2005}, states that $\sigma$ belongs to the \Szego class on the real line if and only if for every $\lambda_0\in \Cm_+$ we have $P(\cdot, \lambda_0)\in L^2(\R_+)$. In this situation there exists a constant $\gamma \in [0, 2\pi)$ and a sequence of positive numbers $r_n\to\infty$ such that the limit relation
	\begin{gather}
		\label{eq: tmp convergence in Krein theorem}
		\lim_{n\to\infty}P_*(r_n,\lambda) = e^{i\gamma}\Pi(\lambda) = \Pi_{\gamma}(\lambda)
	\end{gather}
	holds for every $\lambda\in \Cm_+$. Convergence $\lim P_*(r, \lambda) = \Pi_{\gamma}(\lambda)$ as $r\to\infty$ takes place when $a\in L^2(\R_+)$, see Lemma \ref{Lem: convergence in Krein theorem} below.
	Equations \eqref{second CD formula} and \eqref{eq: tmp convergence in Krein theorem} together imply
	\begin{gather}
		\label{expression for the szego function}
		|\Pi(\lambda)|^2 = 2\Im \lambda \int_0^{\infty} |P(s,\lambda)|^2\, ds,\quad \lambda\in\Cm_+. 
	\end{gather}
	Theorem 6.2 in \cite{Denisov2006} states that $|P_*(r,x)|^{-2}\,dx\to d\sigma(x)$ in the weak - $\ast$ sense. As a corollary of this convergence we get the following important lemma.
	\begin{Lem}\label{Lem: convergence implies a.c.}
		If $|P_*(r, x)|\to |\Pi(x)|$ uniformly on compact subsets of $\R$ then $\sigma$ is absolutely continuous. 
	\end{Lem}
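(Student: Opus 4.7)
The plan is to compare two limits of the measures $\mu_r := |P_*(r,\cdot)|^{-2}\,dx$. Theorem 6.2 of \cite{Denisov2006} provides $\mu_r\to\sigma$ in the weak-$*$ sense, while the hypothesis gives uniform convergence on compacts of the densities to $|\Pi|^{-2}=w$, where $w$ is the density of the absolutely continuous part of $\sigma$. Matching these two limits should yield $d\sigma=w\,dx$, proving absolute continuity.

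As a uniform limit of continuous functions, $|\Pi|$ is continuous on $\R$. Let $Z=\{x\in\R:|\Pi(x)|=0\}$, which is closed. Since $w=|\Pi|^{-2}$ is locally integrable (being the density of the Radon measure $\sigma$), $Z$ has Lebesgue measure zero. On any compact $K\subset\R\setminus Z$, $|\Pi|$ is bounded below by a positive constant, so the hypothesis upgrades to uniform convergence $|P_*(r,\cdot)|^{-2}\to w$ on $K$. For any test function $\phi\in C_c(\R\setminus Z)$ supported in such a $K$, weak-$*$ convergence and uniform convergence both compute $\lim_r\int\phi|P_*(r,\cdot)|^{-2}\,dx$, giving
\[
\int_\R\phi\,d\sigma=\int_\R\phi(x)w(x)\,dx.
\]
Hence $\sigma$ and $w\,dx$ agree on the open set $\R\setminus Z$, and the singular part $\sigma_s$ is concentrated on $Z$.

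The main obstacle is showing $\sigma(Z)=0$: the densities $|P_*(r,\cdot)|^{-2}$ develop tall spikes near points of $Z$ where $|P_*(r,\cdot)|$ approaches zero, and a priori such spikes could contribute singular mass under weak-$*$ convergence. To rule this out, I would approximate $Z$ from outside by open sets $U_n\downarrow Z$ with Lebesgue measure $|U_n|\to 0$; Portmanteau's inequality yields
\[
\sigma(Z)\le\sigma(U_n)\le\liminf_{r\to\infty}\int_{U_n}|P_*(r,x)|^{-2}\,dx.
\]
The remaining task is to show that this $\liminf$ is bounded above by $\int_{U_n}w\,dx$, which tends to zero as $n\to\infty$. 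This requires a careful analysis using uniform convergence on compact subsets of $U_n\setminus Z$ together with local integrability of $w=|\Pi|^{-2}$: the essential point is that tall narrow spikes of $|P_*(r,\cdot)|^{-2}$ localized near the measure-zero set $Z$ cannot, in the limit, exceed the smooth contribution $\int w\,dx$. This is the delicate technical step; once it is in hand, $\sigma(Z)=0$ and absolute continuity of $\sigma$ follow.
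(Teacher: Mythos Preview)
Your argument on $\R\setminus Z$ is correct: matching the weak-$*$ limit from Theorem~6.2 of \cite{Denisov2006} with the uniform convergence of the densities gives $d\sigma=w\,dx$ there. The paper treats the lemma as an immediate corollary of that weak-$*$ convergence and does not spell out more than this.

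The genuine problem is the step you flag as ``delicate'' and then leave open. The inequality you need,
\[
\liminf_{r\to\infty}\int_{U_n}|P_*(r,x)|^{-2}\,dx\ \le\ \int_{U_n}w(x)\,dx,
\]
is \emph{false} under the bare hypotheses of uniform convergence plus local integrability of $w=|\Pi|^{-2}$. Here is a model obstruction: on $[0,1]$ take $f(x)=x^{1/4}$, so $f^{-2}=x^{-1/2}\in L^1$, and let $f_r(x)=f(x)$ for $x\ge r^{-2}$ and $f_r(x)=r^{-1}$ for $0\le x<r^{-2}$. Then $\|f_r-f\|_\infty\le r^{-1/2}\to 0$, yet $\int_0^{r^{-2}}f_r^{-2}\,dx=r^{2}\cdot r^{-2}=1$ for every $r$, while $\int_0^{r^{-2}}f^{-2}\,dx=2r^{-1}\to 0$. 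So the weak-$*$ limit of $f_r^{-2}\,dx$ picks up a unit point mass at $0$. No amount of Portmanteau or Fatou will close this gap: uniform convergence of $|P_*|\to|\Pi|$ simply does not control $\int |P_*|^{-2}$ near the zero set of $|\Pi|$.

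What rescues the situation in the paper's only use of this lemma (the proof of Proposition~\ref{prop: from small p}) is that $\Pi$ has just been shown to be \emph{entire}. An entire $\Pi$ cannot vanish at any $x_0\in\R$: if it did, then $|\Pi(x)|^{-2}\sim c|x-x_0|^{-2k}$ near $x_0$ with $k\ge 1$, contradicting the local integrability of $w=|\Pi|^{-2}$ that comes from $\int_\R(1+x^2)^{-1}\,d\sigma<\infty$. Hence $Z=\emptyset$, and your first paragraph already finishes the proof. You should either add this observation (that in the intended application $Z=\emptyset$), or restate the lemma with the extra hypothesis that $|\Pi|$ has no real zeros; as written, your outline does not establish the lemma in the generality you claim.
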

	\subsection{Extremal problem and Christoffel functions}
	
	Let $PW_{[0, r]}$ denote the Paley-Wiener space of entire functions $f$ with the spectrum in $[0,r]$, in other words, the space of functions of the form $f = \F^{-1}\phi$ with $\phi\in L^2([0,r])$.
	Lemma 8.1 in \cite{Denisov2006} states that $PW_{[0,r]}\subset L^2(\R, \sigma)$.
	For $r >0$ and $z\in \Cm$, define
	\begin{gather}\label{definition of the minimization function for Krein system}
		\bm_r(z) = \inf\left\{\frac{1}{2\pi|f(z)|}\norm[f]^2_{ L^2(\R, \sigma)} \colon  f\in PW_{[0,r]},\; f(z) \neq 0\right\}.
	\end{gather}
	The function $\bm_r$ is the analog of the Christoffel function $\lambda_n$ from the theory of orthogonal polynomials, recall \eqref{eq: OPUC christoffel function}. Lemma 8.2 in \cite{Denisov2006} says
	\begin{gather}
		\label{m_r function formula}
		\bm_r(z) = \left(\int_0^r|P(s, z)|^2\,ds\right)^{-1},\quad z\in \Cm.
	\end{gather}
	Moreover, the minimizer in \eqref{definition of the minimization function for Krein system} is unique up to the constant factor and is given by 
	\begin{gather}
		\label{eq: reproducing kernel in PW}
		k_{r,z}(\lambda) = \frac{1}{2\pi}\int_0^r P(s,\lambda)\ol{P(s,z)}\, ds\in PW_{[0,r]}.
	\end{gather}

	\subsection{Krein system with \texorpdfstring{$L^2$}{L2} coefficient}
	In the present paper we are interested in the case when the coefficient $a$ of the Krein system \eqref{Krein system} belongs to $L^2(\R_+)$. Three following results describe the properties of Krein system in this situation.
	
	\begin{Thm}[S. Denisov, \cite{Denisov2006}, Theorem 11.2]\label{thm: szego to hardy}
		If $a\in L^2(\R_+)$ then $\sigma$ belongs to the \Szego class on the real line, $\Pi$ is well-defined and $\Pi_{\gamma}^{-1} = 1 + h$, where $h\in H^2(\Cm_+)$ is such that $\|h\|_{H^2(\Cm_+)} = \|a\|_{L^2(\R_+)}$.
	\end{Thm}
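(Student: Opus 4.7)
My plan is to establish the three conclusions of the theorem in order: first, $\sigma$ is in the \Szego class (so that $\Pi$ is well-defined); next, the subsequential convergence in \eqref{eq: tmp convergence in Krein theorem} upgrades to the full limit $P_*(r,\lambda)\to\Pi_\gamma(\lambda)$ as $r\to\infty$; and finally, the isometric identity $\|\Pi_\gamma^{-1} - 1\|_{H^2(\Cm_+)} = \|a\|_{L^2(\R_+)}$.

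For the \Szego-class statement, I would fix $\lambda_0 = i$. The identity \eqref{second CD formula} reads $|P_*(r,i)|^2 = |P(r,i)|^2 + 2\int_0^r|P(s,i)|^2\,ds$, so it suffices to bound $|P_*(r,i)|$ uniformly in $r$; then $P(\cdot,i)\in L^2(\R_+)$ and the Krein theorem cited above delivers the \Szego property. Starting from $\frac{d}{dr}|P_*(r,i)|^2 = -2\Re(a(r)P(r,i)\overline{P_*(r,i)})$ and Cauchy--Schwarz,
\begin{equation*}
\bigl||P_*(r,i)|^2 - 1\bigr| \le 2\|a\|_{L^2([0,r])}\cdot \sup_{s\le r}|P_*(s,i)|\cdot \Bigl(\int_0^r|P(s,i)|^2\,ds\Bigr)^{1/2},
\end{equation*}
and combining with $2\int_0^r|P|^2\,ds\le |P_*(r,i)|^2$ from \eqref{second CD formula} gives a self-improving bound on $\sup_{s\le r}|P_*(s,i)|$ on any subinterval on which $\|a\|_{L^2}$ is small; iterating over a finite partition of $\R_+$ yields the uniform bound. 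The convergence in step two then follows by showing that $r\mapsto P_*(r,\lambda)$ is Cauchy at infinity: the integrated Krein equation $P_*(r_2,\lambda) - P_*(r_1,\lambda) = -\int_{r_1}^{r_2}a(s)P(s,\lambda)\,ds$, another application of Cauchy--Schwarz, and the $L^2$-bound just established do the job; the limit is identified via \eqref{eq: tmp convergence in Krein theorem}.

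The main obstacle is the isometric identity in step three. For each fixed $r > 0$, $P_*(r,\cdot)^{-1}$ is analytic in $\Cm_+$ (since $|P_*|\ge|P|$ there by \eqref{second CD formula}), so $P_*(r,\cdot)^{-1} - 1 \in H^2(\Cm_+)$. I would try to derive a differential identity of the form
\begin{equation*}
\frac{d}{dr}\bigl\|\Pi_{\gamma_r}(\,\cdot\,;a_r)^{-1} - 1\bigr\|_{H^2(\Cm_+)}^2 = -|a(r)|^2,
\end{equation*}
where $a_r = a(\,\cdot\,+r)$ is the shifted coefficient (with its own Krein data), and integrate from $0$ to $\infty$ using $\Pi(\,\cdot\,;0)=1$ to conclude. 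To leading order in $a$, one has $\Pi_\gamma^{-1}(\lambda) - 1 \approx \int_0^\infty a(s)\,e^{is\lambda}\,ds$, for which the classical Plancherel identity gives $\|h\|_{H^2} = \|a\|_{L^2}$ exactly; the real difficulty is upgrading this linearized Parseval relation to the full nonlinear identity. This essentially amounts to showing that $a\mapsto h$ realizes the continuous analogue of the Schur algorithm from orthogonal polynomial theory as an isometric bijection from $L^2(\R_+)$ onto $\{h\in H^2(\Cm_+)\colon 1+h\text{ is outer and }1+h(i)>0\}$, and fully exploiting the coupled structure of \eqref{Krein system} --- rather than either equation in isolation --- is where the nonlinear content enters.
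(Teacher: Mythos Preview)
The paper does not prove this theorem at all: it is stated as a quotation of Theorem~11.2 in Denisov's survey \cite{Denisov2006} and used as a black box. There is therefore no ``paper's own proof'' to compare against. That said, your first two steps are sound and in fact partially reproduced elsewhere in the paper: the uniform bound on $|P_*(r,\lambda)|$ for $\Im\lambda>0$ via a Gr\"onwall-type argument is exactly Proposition~\ref{Prop: Krein system estimates} (with the reference to the proof of Theorem~11.1 in \cite{Denisov2006}), and your Cauchy-at-infinity argument for $P_*(r,\lambda)$ is Lemma~\ref{Lem: convergence in Krein theorem} verbatim.

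The genuine gap is step three. You correctly identify that the linearized map $a\mapsto\int_0^\infty a(s)e^{is\lambda}\,ds$ is an isometry by Plancherel, and that the content lies in upgrading this to the nonlinear map $a\mapsto \Pi_\gamma^{-1}-1$. But the differential identity you propose,
\[
\frac{d}{dr}\bigl\|\Pi_{\gamma_r}(\,\cdot\,;a_r)^{-1}-1\bigr\|_{H^2(\Cm_+)}^2=-|a(r)|^2,
\]
is not derived, and it is not obvious: the constant $\gamma_r$ depends on $r$ in an uncontrolled way, and the dependence of $\Pi(\,\cdot\,;a_r)$ on $r$ involves the full nonlinear flow, not just the leading-order term. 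The actual argument in \cite{Denisov2006} goes through the continuous analogue of the Szeg\H{o} recursion and a direct computation with the Schur/reflection coefficient function, exploiting that $|P_*(r,x)|^2-|P(r,x)|^2$ is an exact derivative in $r$ on the spectral side while $|a(r)|^2$ appears on the potential side; your sketch gestures at this structure but does not isolate the conserved quantity that makes the identity exact rather than approximate.
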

	\begin{Prop}\label{Prop: Krein system estimates}
		Assume that $a\in L^2(\R_+)$. Then, for every $\eps > 0$, the function $P_*(r, z)$ is uniformly bounded for $r\ge 0$ and $z$ with $\Im z > \eps$. Also for $z\in \Cm$ we have 
		\begin{gather*}
			|P_*(r, z)|\le \exp\left(\|a\|_{L^1([0,r])} + r(\Im z)_-\right) \ls \exp\left(r\|a\|_{L^2(\R_+)} + r(\Im z)_-\right),
		\end{gather*}
		where $(x)_-$ is the negative part of $x$, i.e.,  $(x)_- = 0$ if $x\ge 0$ and $(x)_- = -x$ if $x < 0$.
	\end{Prop}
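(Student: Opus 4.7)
The proposition combines two separate assertions --- uniform boundedness on each half-plane $\{\Im z > \eps\}$ and a pointwise exponential bound valid on all of $\Cm$ --- and they require different arguments. My plan is to establish each via a \GrBell inequality applied to energies built from $|P|^2$ and $|P_*|^2$, but to exploit the sign of $\Im z$ in fundamentally different ways.

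For the \emph{pointwise bound}, differentiating along \eqref{Krein system} gives the elementary energy identities
\begin{align*}
\partial_r |P|^2 &= -2\Im(z)|P|^2 - 2\Re(aP\ol{P_*}) \le 2(\Im z)_-|P|^2 + 2|a||P||P_*|, \\
\partial_r |P_*|^2 &= -2\Re(aP\ol{P_*}) \le 2|a||P||P_*|,
\end{align*}
which (after the standard regularization of $|P|^2, |P_*|^2$ by $|P|^2+\delta$ at their zeros and $\delta\to 0$) pass to $\partial_r|P|\le(\Im z)_-|P| + |a||P_*|$ and $\partial_r|P_*| \le |a||P|$. Consequently $u(r) := \max(|P(r,z)|, |P_*(r,z)|)$ satisfies $u'(r) \le \bigl((\Im z)_- + |a(r)|\bigr) u(r)$ almost everywhere, with $u(0) = 1$. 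The \GrBell inequality then delivers $|P_*(r,z)| \le u(r) \le \exp\bigl(r(\Im z)_- + \|a\|_{L^1([0,r])}\bigr)$, and the $\ls$-refinement is just Cauchy--Schwarz, $\|a\|_{L^1([0,r])} \le \sqrt r\,\|a\|_{L^2(\R_+)} \le (1+r)\|a\|_{L^2(\R_+)}$.

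For the \emph{uniform bound} on the half-plane $\{\Im z > \eps\}$, the pointwise estimate is insufficient, since $\|a\|_{L^1([0,r])}$ may grow without bound under the hypothesis $a \in L^2$. The decisive idea is to sum the two energy identities,
\begin{equation*}
\partial_r(|P|^2 + |P_*|^2) = -2\Im(z)|P|^2 - 4\Re(aP\ol{P_*}),
\end{equation*}
and absorb the cross term by AM--GM using the \emph{dissipative} first summand (positive definite since $\Im z > 0$): $4|a||P||P_*| \le 2\Im(z)|P|^2 + \tfrac{2|a|^2}{\Im z}|P_*|^2$. What remains is
\begin{equation*}
\partial_r(|P|^2 + |P_*|^2) \le \tfrac{2|a|^2}{\Im z}(|P|^2+|P_*|^2),
\end{equation*}
and \GrBell then yields $|P(r,z)|^2 + |P_*(r,z)|^2 \le 2\exp\bigl(2\|a\|_{L^2(\R_+)}^2/\Im z\bigr) \le 2\exp(2\|a\|_{L^2(\R_+)}^2/\eps)$, uniformly in $r\ge 0$.

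The conceptually novel step is the AM--GM split exploiting the sign of $\Im z$: a naive estimate $4|a||P||P_*| \le 2|a|(|P|^2+|P_*|^2)$ would only recover the pointwise bound (which grows in $r$); trading the dissipative term $-2\Im(z)|P|^2$ for $L^2$-control of $a$ is precisely what upgrades the estimate into an $r$-uniform one on each half-plane $\{\Im z > \eps\}$.
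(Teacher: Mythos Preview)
Your proof is correct, and at the core both your argument and the paper's rest on the \GrBell inequality, as the paper explicitly states. The execution differs, however. For the pointwise bound the paper treats the cases $\Im z\ge 0$ and $\Im z<0$ separately: for $\Im z\ge 0$ it cites the \GrBell estimate from Denisov's survey, and for $\Im z<0$ it invokes the reflection formula \eqref{Krein system reflection formula} together with the Christoffel--Darboux inequality $|P_*(r,z)|>|P(r,z)|$ in $\Cm_+$ to reduce to the first case. Your max-function $u=\max(|P|,|P_*|)$ handles both half-planes in one stroke, avoiding any appeal to reflection or Christoffel--Darboux. For the uniform bound the paper simply defers to Theorem~11.1 of \cite{Denisov2006}; your AM--GM split trading the dissipative term $-2\Im(z)|P|^2$ against $L^2$-control of $a$ makes that step completely self-contained. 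The net effect: the paper's route is shorter by outsourcing, while yours is longer but requires no external references and no structural identities beyond the differential equations themselves.
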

	\begin{proof}
		The boundedness of $P_*$ follows from  \GrBell inequality applied for the Krein system, for the details see the proof of Theorem 11.1 in  \cite{Denisov2006}. Different application of \GrBell inequality gives the bound 
		\begin{gather*}
			|P_*(r,z)|\le \exp(\|a\|_{L^1([0,r])}) = \exp\left(\|a\|_{L^1([0,r])} + r(\Im z)_-\right)
		\end{gather*}
		for $z$ with $\Im z \ge 0$, see the proof of Theorem 12.1 in  \cite{Denisov2006}. For $z$ with negative imaginary part we can use the reflection  formula \eqref{Krein system reflection formula} and the inequality $|P_*(r, z)|> |P(r,z)|$ for $z\in \Cm_+$ given by \eqref{first CD formula}:
		\begin{gather*}
			|P_*(r, z)| = \left|e^{i z r}\ol{P(r,\ol{ z})}\right| \le e^{r(\Im z)_-}|P_*(r,\ol{ z})|\le \exp\left(\|a\|_{L^1([0,r])} + r(\Im z)_-\right).
		\end{gather*}
		The inequality $ \|a\|_{L^1([0,r])}\le \sqrt{r}\|a\|_{L^2([0,r])}\le \frac{1 + r}{2}\|a\|_{L^2([0,r])}$ finishes the proof.
	\end{proof}
	\begin{Lem}
		\label{Lem: convergence in Krein theorem}
		If $a\in L^2(\R_+)$ then for some $\gamma\in [0, 2\pi)$ we have $\lim_{r\to\infty}P_*(r,\lambda) = \Pi(\lambda)$ for every $\lambda\in \Cm_+$. 
	\end{Lem}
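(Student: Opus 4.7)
The plan is to obtain convergence by integrating the second equation of the Krein system directly. From $\frac{\partial}{\partial r} P_*(r,\lambda) = -a(r)P(r,\lambda)$ with $P_*(0,\lambda) = 1$, I would write
\begin{gather*}
P_*(r,\lambda) = 1 - \int_0^r a(s) P(s,\lambda)\, ds.
\end{gather*}
The whole task then reduces to showing that the integral on the right has a limit as $r \to \infty$ for every $\lambda \in \Cm_+$.

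To establish this, I would combine two ingredients quoted in Section~\ref{section: krein systems}. First, Theorem~\ref{thm: szego to hardy} tells us that, under the hypothesis $a \in L^2(\R_+)$, the spectral measure $\sigma$ lies in the \Szego class on $\R$. Second, Krein's theorem (stated just before the Lemma) then guarantees that $P(\cdot,\lambda) \in L^2(\R_+)$ for every $\lambda \in \Cm_+$. Applying Cauchy--Schwarz to $a \cdot P(\cdot,\lambda)$ gives $a P(\cdot,\lambda) \in L^1(\R_+)$, so the improper integral $\int_0^\infty a(s) P(s,\lambda)\, ds$ converges absolutely. Consequently, $\lim_{r\to\infty} P_*(r,\lambda)$ exists for every $\lambda \in \Cm_+$.

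To identify the limit, I would invoke the subsequential convergence part of Krein's theorem, recalled in \eqref{eq: tmp convergence in Krein theorem}: there exist $\gamma \in [0,2\pi)$ and $r_n \to \infty$ with $P_*(r_n,\lambda) \to e^{i\gamma}\Pi(\lambda) = \Pi_\gamma(\lambda)$ for each $\lambda \in \Cm_+$. Since the full limit already exists by the previous step, it must agree with this subsequential limit, completing the proof. There is no serious obstacle here — the argument is essentially a one-line observation once the $L^2$ integrability $P(\cdot,\lambda) \in L^2(\R_+)$ is in hand — which is exactly why the Lemma can be presented as a refinement of Krein's theorem in the $L^2$ setting.
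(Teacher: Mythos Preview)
Your proposal is correct and essentially identical to the paper's proof: both apply Cauchy--Schwarz to $\partial_r P_* = -aP$ (using $a\in L^2(\R_+)$ and $P(\cdot,\lambda)\in L^2(\R_+)$ from Krein's theorem) to get $\partial_r P_*(\cdot,\lambda)\in L^1(\R_+)$, hence convergence of $P_*(r,\lambda)$, and then identify the limit via the subsequential relation \eqref{eq: tmp convergence in Krein theorem}. Your version is slightly more explicit in invoking Theorem~\ref{thm: szego to hardy} to justify the \Szego class hypothesis needed for Krein's theorem, but the argument is the same.
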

	\begin{proof}
		Apply the Cauchy inequality to the differential equation for $P_*$ and use the assertion $\|P\|_{L^2(\R_+)}<\infty$ from Krein theorem. We have
		\begin{gather*}
			\| \frac{\partial}{\partial r}P_*(r,\lambda)  \|_{L^1(\R_+)}\le \|a\|_{L^2(\R_+)}\|P(r,\lambda)\|_{L^2(\R_+)} < \infty
		\end{gather*}
		hence $P_*(r,\lambda)$ converges as $r\to\infty$. The limit coincides with $\Pi_{\gamma}$, recall \eqref{eq: tmp convergence in Krein theorem}.
		
	\end{proof}
	
	\subsection{Entropy function}\label{section: entropy function}
	Consider Krein system \eqref{Krein system} with coefficient $a\in L^2(\R_+)$ and let $J = \smatrix{0}{1}{-1}{0}$ be the square root of the minus identity matrix and $Q =\smatrix{-q}{p}{p}{q}$ be the matrix-valued function with $p(r) = -2\Re a(2r)$, $q(r)  = 2\Im a(2r)$. Krein system with the coefficient $a$ is equivalent to the differential equation for the generalized eigenfunctions of the Dirac operator on the half-line
	\begin{gather*}
		\mathcal{D}_Q = J\frac{d}{dr} + Q,
	\end{gather*}
	see Section 13 in \cite{Denisov2006} for the details. In particular, the spectral measure $\sigma_a$ can be defined in terms of $D_Q$ and the results for Krein systems, such as Theorem \ref{thm: main equivalences of convergences}, can be reformulated for the Dirac operator. When the inverse \Szego function of the Krein system or Dirac operator is entire one can speak of its zeroes -- the scattering resonances, see the book \cite{Dyatlov2019} by S. Dyatlov and M. Zworski for the general theory. The exposition for specific case of the Dirac operator can be found in \cite{Korotyaev2021}, also see the references within. Theorem \ref{thm: main equivalences of convergences} allows us to study resonances of Dirac operators with oscillating potentials from $\osc_\alpha$. Such studies will be presented elsewhere.

	In the papers \cite{BESSONOV2020106851},  \cite{Bessonov2021} R. Bessonov and S. Denisov described the class of canonical systems with the spectral measure from the \Szego class in terms of the so-called entropy function, also see \cite{Bessonov2022} for the case of Dirac operators. Let us formulate this result on the language of Krein systems. Let $N_a$ be the solution of 
	\begin{gather}
		\label{eq: Na def}
		JN_a'(r) + Q(t)N_a(r) = 0,\quad N_a(0) =  \left(\begin{smallmatrix}1&0\\0&1\end{smallmatrix}\right),\quad r\ge 0
	\end{gather}
	and set
	\begin{gather}
		\label{eq: entropy def}
		\qquad E_a(r) = \det\left[\int_{r}^{r + 2}  N_a^*(t)N_a(t)\,dt\right] - 4.
	\end{gather}
	Recall the definition \eqref{def: measure entropy} of the entropy function $\K_a$. We have $\K_a(0) < \infty$ if and only if $\sigma_a$ belongs to the \Szego class. 
	\begin{Thm}[Theorem 1.2, \cite{Bessonov2021}]\label{known theorem on two entropies}\label{thm: bessonov, denisov, entropy}
		Assume that $a\in L^1_{\loc}(\R_+)$ and let $\sigma$ be the spectral measure of the corresponding Krein system.
		Then $\sigma$ belongs to the \Szego class on the real line if and only if $\sum_{n\ge 0}E_a(n) <\infty$. More precisely, we have
		\begin{gather*}
			\K_a(0)\ls \sum_{n\ge 0}E_a(n)\ls \K_a(0)^{c\K_a(0)}
		\end{gather*}
		for some absolute constant $c$.
	\end{Thm}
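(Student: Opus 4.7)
The plan is to use a local-to-global principle, relating the global Szeg\H{o} entropy $\K_a(0)$ to local contributions of bounded-length intervals that can be compared to $E_a(n)$. First, I would derive an identity for the increment $\K_a(r) - \K_a(r+2)$ in terms of the fundamental matrix $N_a$ restricted to $[r, r+2]$. Since the shifted spectral measure $\sigma_r$ is obtained from $\sigma$ via a M\"obius-type action of $N_a(r)$ on the associated Weyl functions, and the entropy is a log-integral of the spectral density against the Poisson kernel at~$i$, one should obtain a closed expression for $\K_a(r) - \K_a(r+2)$ involving only the block $N_a \big|_{[r,r+2]}$.

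Next, I would compare that local increment with $E_a(r) = \det\int_{r}^{r+2} N_a^*(t)N_a(t)\,dt - 4$. Note that $N_a$ is symplectic (since $JN_a' + QN_a = 0$ with $J$ antisymmetric and $Q$ symmetric yields $\det N_a \equiv 1$), so the symmetric positive-definite matrix $M_r = \int_r^{r+2} N_a^*N_a\,dt$ has eigenvalues that are large and its reciprocal in a constrained way; in particular $\det M_r \ge 4$ by AM-GM applied to the eigenvalues of $N_a^*N_a$, with equality iff $N_a^*(t)N_a(t) \equiv I$ on $[r, r+2]$. Hence $E_a(r) \ge 0$ measures in an integrated sense how far $N_a$ departs from being an isometry on the interval, and this should match (up to universal constants, at least in the small regime) the M\"obius-distortion measured by the entropy increment of Step~1.

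Given Steps~1--2, the lower bound $\K_a(0) \ls \sum_{n\ge 0} E_a(n)$ would follow by summing the local identities across $[n, n+2]$ and using $\K_a(r) \to 0$ as $r \to \infty$. For the reverse inequality $\sum_{n \ge 0} E_a(n) \ls \K_a(0)^{c\K_a(0)}$, the degraded quantitative bound reflects that when $\K_a(0)$ is large the transfer matrix can amplify exponentially before relaxing: $M_r$ is controlled by $e^{c\K_a(r)}$ via \GrBell applied to the Dirac system, and taking a determinant of a $2\times 2$ matrix whose trace is of exponential size produces the super-exponential factor $\K_a(0)^{c\K_a(0)}$.

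The main obstacle will be Step~2, the fine comparison of the determinantal quantity $E_a(r)$ with the logarithmic entropy increment in the \emph{large} regime, since in the perturbative regime both quantities are essentially $\int_r^{r+2}|a|^2$ and so agree to leading order. In the large regime, one must exploit the rigidity coming from $\det N_a \equiv 1$: the only free parameter is the \emph{single} hyperbolic rotation contained in $N_a^*N_a$, and its integrated magnitude is the only possible source of both $E_a(r)$ and the entropy increment. I would expect the argument to split into the cases of small and large $E_a(n)$, handled respectively by a Taylor-type expansion around the identity and by a coarse bound controlling everything by $\K_a(0)$ with the admissible exponential loss.
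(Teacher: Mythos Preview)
This theorem is not proved in the paper: it is quoted verbatim as Theorem~1.2 of \cite{Bessonov2021} and used as a black box (see the sentence immediately following the statement and its later invocation in the proof of Theorem~\ref{thm: main equivalences of convergences}). There is therefore no ``paper's own proof'' to compare your proposal against.

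For what it is worth, your sketch is in the right spirit for the Bessonov--Denisov argument: the proof in \cite{Bessonov2021} does proceed by localizing the entropy to intervals of bounded length and comparing the increment $\K_a(r)-\K_a(r+2)$ with the determinantal quantity $E_a(r)$, using the symplectic structure $\det N_a\equiv 1$ and a small/large dichotomy. However, your Step~2 and the upper bound are only heuristics at this level; the actual comparison in the large regime is delicate (it is where the unusual exponent $\K_a(0)^{c\K_a(0)}$ arises), and your appeal to ``\GrBell applied to the Dirac system'' would give at best an exponential bound in $\int|a|$, not in $\K_a(0)$, which is a different and generally much smaller quantity. If you want to reconstruct the proof you should consult \cite{Bessonov2021} directly rather than the present paper.
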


	The paper \cite{Gub2024} of the author is dedicated to the case when the series $\sum_{n\ge 0}E_a(n)$ converges exponentially fast. When $E_a(r) = \eps_{1}(r)$, Theorem 1.5 in \cite{Gub2024} takes the following form.
	\begin{Thm}[Theorem 1.5, \cite{Gub2024}]\label{thm: gubkin masters}
		Assume that $a\in L^2(\R_+)$ then $E_a(r) = \eps_{1}(r)$ if and only if the spectral measure $\sigma$ is a\,.c.\,and $\Pi$ has an entire extension satisfying $\Pi(x - i\delta)/(x + i)\in H^2(\Cm_+)$ for every $\delta > 0$.    
	\end{Thm}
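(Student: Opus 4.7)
The plan is to use the shifted entropy $\K_a(r) = \K_{a_r}(0)$, where $a_r\colon x\mapsto a(x+r)$, as a bridge between $E_a$ and $\Pi$. Theorem~\ref{thm: bessonov, denisov, entropy} applied to each shifted system gives
\begin{gather*}
\K_a(r) = \K_{a_r}(0) \lesssim \sum_{n\ge 0} E_{a_r}(n) = \sum_{n\ge 0} E_a(n+r),
\end{gather*}
so $E_a(r)=\eps_1(r)$ yields $\K_a(r)=\eps_1(r)$ since tail sums preserve the $\eps_1$ class. The reverse transfer requires a local bound $E_a(r)\lesssim(1+r)^c\,\K_a(r)$, which I would extract from the integral representation~\eqref{eq: entropy def} combined with the Hardy-space decomposition in Theorem~\ref{thm: szego to hardy} applied on unit intervals.

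The next step converts entropy decay into $L^2$-tail decay of $a$. Theorem~\ref{thm: szego to hardy} for the shifted system gives $\Pi_{r,\gamma_r}^{-1} = 1+h_r$ with $\|h_r\|_{H^2(\Cm_+)} = \|a\|_{L^2([r,\infty))}$, and a Jensen-gap calculation on the outer function $\Pi_r$ identifies $\K_a(r)$ with a quantity comparable to $\|h_r\|^2_{H^2}$ modulo a Poisson-integral correction at $\lambda=i$. In the super-exponential regime $\K_a(r)\to 0$, a bootstrap absorbs this correction and yields $\|a\|_{L^2([r,\infty))} = \eps_1(r)$.

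With super-exponential $L^2$-tail control of $a$, one extends $\Pi$ as an entire function. Lemma~\ref{Lem: convergence in Krein theorem} gives $\lim_{r\to\infty} P_*(r,\lambda) = \Pi_\gamma(\lambda)$ on $\Cm_+$, and each $P_*(r,\cdot)$ is entire in $\lambda$. The integrated ODE reads $P_*(r',\lambda) - P_*(r,\lambda) = -\int_r^{r'}a(s)P(s,\lambda)\,ds$, and the reflection formula~\eqref{Krein system reflection formula} combined with Proposition~\ref{Prop: Krein system estimates} yields $|P(s,\lambda)|\lesssim_\delta e^{s\delta}$ uniformly on any strip $\{\Im\lambda > -\delta\}$. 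A dyadic summation using $\|a\|_{L^2([r,\infty))} = \eps_1(r)$ then gives $\int_r^\infty |a(s)| e^{s\delta}\,ds = \eps_1(r)$ for every $\delta > 0$, so $P_*(r,\cdot)$ is uniformly Cauchy on compact subsets of $\Cm$, producing the entire extension of $\Pi_\gamma$ together with the quantitative estimate $|\Pi_\gamma(\lambda) - P_*(r,\lambda)| \lesssim_\delta \eps_1(r)$ on the strip. For the $H^2$-bound on $\Pi(\cdot-i\delta)/(\cdot+i)$, integrate the pointwise bound $|\Pi(x-i\delta)|^2 \lesssim |P_*(r_0, x-i\delta)|^2 + \eps_1(r_0)^2$ against $(x^2+1)^{-1}$, using Proposition~\ref{Prop: Krein system estimates} to control the Paley-Wiener factor $P_*(r_0,\cdot)$.

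For the converse, assuming $\Pi$ is entire with the $H^2$-bound, the Christoffel function~\eqref{m_r function formula} and reproducing kernel~\eqref{eq: reproducing kernel in PW} relate $\int_0^r |P(s,z)|^2\,ds$ to projections in $PW_{[0,r]}\subset L^2(\R,\sigma)$, and a contour-shifting argument using the $H^2$-bound forces the Fourier transform of $\Pi^{-1}_\gamma$ outside $[0,r]$ to have $L^2$-norm $\lesssim_\delta e^{-r\delta}$ for every $\delta$, which via a Parseval-type identity translates into $\K_a(r) = \eps_1(r)$ and hence $E_a(r) = \eps_1(r)$ through the reverse Bessonov-Denisov step. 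The main obstacle is the quantitative passage between $\K_a(r)$ and $\|a\|_{L^2([r,\infty))}$: Jensen trivially controls the variance-type quantity $\|h_r - h_r(i)\|^2_{H^2}$ by $\K_a(r)$, but recovering the full norm $\|h_r\|^2_{H^2}$ requires careful use of the outer structure of $\Pi_r$ and the Krein-specific identity~\eqref{expression for the szego function}, which is the technical heart of the argument.
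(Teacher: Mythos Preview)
The paper does not prove this theorem; it is quoted verbatim as Theorem~1.5 from \cite{Gub2024} and used as a black box (via the reformulation in Theorem~\ref{thm: reformulation master}) to handle the $\alpha=1$ case of the main result. So there is no in-paper proof to compare against.

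That said, your sketch contains a genuine error at its central step. You propose to pass from $\K_a(r)=\eps_1(r)$ to $\|a\|_{L^2([r,\infty))}=\eps_1(r)$ via the identity $\|h_r\|_{H^2(\Cm_+)}=\|a\|_{L^2([r,\infty))}$ from Theorem~\ref{thm: szego to hardy}, arguing that a Jensen-gap calculation makes $\K_a(r)$ comparable to $\|h_r\|_{H^2}^2$. This implication is false. The entire point of the oscillation framework in this paper (Theorems~\ref{thm: entropy and variation} and~\ref{thm: osc var}) is that $E_a(r)\approx D_a(r)$ measures the decay of the \emph{antiderivative} $\int_r^\infty a$, not of the $L^2$-tail of $a$. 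One can take $a$ supported on a sparse sequence of unit intervals, say $a(x)=c_n\sin(k_n x)$ on $[2^n,2^n+1]$ with $c_n^2=1/n^2$ and $k_n$ growing arbitrarily fast: then $\|a\|_{L^2([r,\infty))}^2\approx 1/\log r$, yet by choosing $k_n$ large enough the antiderivative tails $\int_r^\infty a$ can be made to decay super-exponentially, giving $a\in\osc_1$ and hence $E_a(r)=\eps_1(r)$ and $\K_a(r)=\eps_1(r)$. So no amount of ``careful use of the outer structure'' will recover the full $\|h_r\|_{H^2}$ from $\K_a(r)$.

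Because your extension of $\Pi$ to an entire function (the dyadic summation $\int_r^\infty|a(s)|e^{s\delta}\,ds=\eps_1(r)$) rests on this false $L^2$-tail bound, that argument collapses as well. The correct route, which the paper develops in Proposition~\ref{prop: conv of p star and p} for general $\alpha$, is to integrate by parts in $\int_r^{r_1} a(t)e^{itz}\,dt$ so that the bound is driven by $\sup_{s\ge r}|\int_s^\infty a|$ rather than by $\|a\|_{L^2([r,\infty))}$; this is exactly the content of the condition $a\in\osc_\alpha$.
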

	This theorem concerns an $\alpha = 1$ part of Theorem \ref{thm: main equivalences of convergences}. In Proposition \ref{Prop: S_1 class description} we show that the assertion $\Pi(x - i\delta)/(x + i)\in H^2(\Cm_+)$ is exactly the assertion \ref{cond: main thm 2} from Theorem \ref{thm: main equivalences of convergences}. Thus, the proof of Theorem \ref{thm: main equivalences of convergences} for $\alpha = 1$ requires the equivalence of $E_a(r) = \eps_{1}(r)$ and $a\in \osc_1$. We formulate this in the following two results. Let $g_{a,r}(t) = \int_r^ta(s)\,ds$ and define the variation of $a$ by
	\begin{gather}\label{eq: def variation of a}
		D_a(r) = 2\int_r^{r + 2}|g_{a,r}(t)|^2\,dt - \left|\int_r^{r + 2}g_{a,r}(t)\,dt\right|^2.
	\end{gather}
	\begin{Thm}\label{thm: entropy and variation}
		If $a\in L^2(\R_+)$ then  $E_a(r)\approx D_a(r)$.
	\end{Thm}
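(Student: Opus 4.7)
The plan is to exploit the symplectic structure of $N_a$, expand the Gram matrix $\int_r^{r+2}N_a^T(t) N_a(t)\,dt$ perturbatively in the Dirac potential $Q$ (equivalently, in $a$), and identify the leading order with $D_a(r)$; the remainder is then controlled uniformly using $a\in L^2(\R_+)$.

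\textbf{Shift reduction.} Setting $\tilde N_r(t):=N_a(r+t)\,N_a(r)^{-1}$ for $t\in[0,2]$, the function $\tilde N_r$ satisfies the Dirac equation on $[0,2]$ with shifted coefficient $Q(r+\cdot)$ and initial value $\tilde N_r(0)=I$. A direct check using the specific structure of $Q$ shows that $B:=JQ$ is symmetric traceless, and that $N_a$ is $J$-symplectic with $\det N_a\equiv 1$. Consequently
\begin{gather*}
E_a(r)=\det\tilde M_r-4,\qquad \tilde M_r:=\int_0^2 \tilde N_r^T(t)\tilde N_r(t)\,dt,
\end{gather*}
so the problem localizes to a bounded interval around $r$.

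\textbf{Leading-order computation.} Picard iteration yields $\tilde N_r=I+G+H+O(B^3)$ where $G(t):=\int_0^t B(s)\,ds$ is symmetric traceless and $H(t):=\int_0^t B(s)G(s)\,ds$. The key identity
\begin{gather*}
H+H^T=\int_0^t (BG+GB)\,ds=\int_0^t (G^2)'\,ds=G(t)^2,
\end{gather*}
combined with the fact that a symmetric traceless $2\times 2$ matrix squares to a scalar multiple of the identity, gives $\tilde N_r^T\tilde N_r=I+2G+2G^2+O(B^3)$ with $G^2=|\phi(t)|^2 I$, where $\phi$ is the complex antiderivative of $a$ featuring in $g_{a,\cdot}$ (up to a rescaling absorbing the factor of $2$ from $p(r)=-2\Re a(2r)$, $q(r)=2\Im a(2r)$). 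Integrating over $[0,2]$, using $\trace G\equiv 0$ together with the $2\times 2$ identity $\det(I+S)=1+\trace S+\det S$, I obtain
\begin{gather*}
E_a(r)=c\cdot D_a(r)+O\bigl(\|B\|_{L^2([0,2])}^3\bigr)
\end{gather*}
for an explicit positive constant $c$, since the quadratic form has the $2:1$ coefficient ratio of $D_a$.

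\textbf{Uniform equivalence.} Since $a\in L^2(\R_+)$, the quantity $\|B\|_{L^2([0,2])}\ls\|a\|_{L^2([2r,2r+4])}$ is uniformly bounded in $r$ and tends to $0$ as $r\to\infty$, so the cubic error is absorbed into the quadratic leading term for all large $r$. For the remaining bounded range one argues by continuity: both $E_a$ and $D_a$ are nonnegative and share the same zero locus (both $E_a(r)=0$ and $D_a(r)=0$ amount to $a\equiv 0$ on the relevant interval, via Gronwall for the Dirac ODE), and a compactness argument gives the two-sided bound.

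\textbf{The main obstacle} I expect is the uniform lower bound $D_a(r)\gs E_a(r)$ when $\|a\|_{L^2([r,r+3])}$ is not small, since the cubic Picard remainder may formally exceed the quadratic leading term. A possible remedy is to express $\det\tilde M_r-4$ directly as the squared $L^2$-Gram deficit of the columns of $\tilde N_r$ relative to the constants $e_1,e_2$, converting the determinantal quantity into a non-perturbative Dirichlet-type functional that is coercive with respect to $D_a(r)$ without requiring smallness of $\|a\|$.
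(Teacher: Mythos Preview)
Your leading-order computation is correct and matches the paper's: the quadratic term in the expansion of $E_a(r)$ is a constant multiple of $D_a(r)$. The gap is in the remainder. Writing the error as $O(\|B\|_{L^2}^3)=O(\delta^3)$ is \emph{not} enough, and this is not the large-$\|a\|$ issue you flag in your ``main obstacle'' paragraph---it persists for arbitrarily small $\delta$. The theorem is meant precisely for oscillating potentials where $D_a(r)$ is superexponentially small (say $e^{-r^2}$) while $\delta=\|a\|_{L^2}$ on the window decays arbitrarily slowly (say $1/\log r$). In that regime $\delta^3$ completely dominates $D_a(r)$, and nothing can be extracted from $E_a(r)=c\,D_a(r)+O(\delta^3)$; your claim that ``the cubic error is absorbed into the quadratic leading term for all large $r$'' tacitly assumes $D_a(r)\approx\delta^2$, which fails exactly in the cases of interest.

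The paper's fix is a sharper sup-norm bound on the antiderivative: if $F(t)=\int_0^t f$ with $\var(F)\le\eps$ and $\|f\|_{L^2}\le\delta$, then $\sup_t|F(t)|\le 4\gamma$ with $\gamma=\eps^{1/2}+\eps^{1/4}\delta^{1/2}$, far better than the naive $\sup|F|\le\delta$. With this, each $k$-fold iterated integral in the Picard expansion is of order $\gamma^{\lceil k/2\rceil}$ rather than $\delta^k$, and since $\gamma^4=O(\eps^2+\eps\delta^2)=o(\eps)$ the tail is controlled. The fourth- and sixth-order contributions still require explicit computation to expose further algebraic cancellations (your $H+H^T=G^2$ is only the first of these) before they drop to $o(\eps)$. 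Your proposed Gram-deficit reformulation does not avoid this: without the $\gamma$-bound on the antiderivative, any expansion---perturbative or variational---will see a remainder of order $\delta^3$. The ``compactness argument'' for bounded $r$ is also not a proof as stated (the relevant function space is not compact), but this is secondary.
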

	\begin{Thm}\label{thm: osc var}
		If $a\in L^2(\R_+)$ then  $a\in\osc_{\alpha}$ if and only if $D_a(r) = \ea(r)$.
	\end{Thm}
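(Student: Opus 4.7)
The plan is to prove the biconditional through a direct analysis tying $D_a(r)$ to the short integral $\Phi(r):=\int_r^{r+2}a(s)\,ds$. As a preliminary observation, for $a \in L^2(\R_+)$ the condition $a\in\osc_\alpha$ is equivalent to $\Phi(r)=\eps_\alpha(r)$: the implication $a\in\osc_\alpha \Rightarrow \Phi=\eps_\alpha$ uses $\Phi(r)=\int_r^\infty a-\int_{r+2}^\infty a$, and the converse uses the decomposition $\int_r^\infty a = \sum_{k\ge 0}\Phi(r+2k)$, whose sum converges and stays in $\eps_\alpha$ since $\eps_\alpha$-terms shifted by $2k$ sum to an $\eps_\alpha$-function for every $\alpha\ge 1$.

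A direct expansion yields the identity
\[
D_a(r) = 2\int_r^{r+2}\bigl|g_{a,r}(t) - \bar g_{a,r}\bigr|^2\,dt,\qquad \bar g_{a,r}:=\tfrac12\int_r^{r+2}g_{a,r}(t)\,dt,
\]
so $D_a(r)/2$ is the $L^2[r,r+2]$-variance of $g_{a,r}$. For the forward direction, assuming $a\in\osc_\alpha$ and setting $\psi(r):=\int_r^\infty a$, we have $\psi=\eps_\alpha$ and hence $g_{a,r}(t) = \psi(r)-\psi(t) = \eps_\alpha(r)$ uniformly for $t\in[r,r+2]$. Substituting into the definition of $D_a(r)$ yields $D_a(r) = \eps_\alpha(r)$.

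For the backward direction, the goal is to derive $|\Phi(r)|\ls D_a(r)^{1/4}$. Transferring to $[0,2]$ via $h(u):=\int_0^u a(r+s)\,ds$, I set $\tilde h := h - \bar h$ where $\bar h$ is the mean on $[0,2]$. Then $\Phi(r) = h(2)-h(0) = \tilde h(2) - \tilde h(0)$, and since $\tilde h$ has mean zero there exists $t_0\in[0,2]$ with $\tilde h(t_0)=0$. The identity $|\tilde h(t)|^2 = 2\Re\int_{t_0}^t\overline{\tilde h(s)}\,\tilde h'(s)\,ds$ together with Cauchy--Schwarz produces the Gagliardo--Nirenberg-type bound $\|\tilde h\|_\infty^2 \le 2\|\tilde h\|_{L^2[0,2]}\|\tilde h'\|_{L^2[0,2]}$. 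Since $\|\tilde h\|_{L^2[0,2]}^2 = D_a(r)/2$, $\|\tilde h'\|_{L^2[0,2]} = \|a\|_{L^2[r,r+2]}\le\|a\|_{L^2(\R_+)}$, and $|\Phi(r)|\le 2\|\tilde h\|_\infty$, we conclude $|\Phi(r)|^2 \ls \sqrt{D_a(r)}$, hence $|\Phi(r)| = \eps_\alpha(r)$, which by the preliminary reduction gives $a\in\osc_\alpha$.

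The main obstacle is precisely this backward step: smallness of $D_a(r)$ only says $g_{a,r}$ is $L^2$-close to a constant on $[r,r+2]$; combined with $g_{a,r}(r)=0$ this does \emph{not} force $|g_{a,r}(r+2)|$ to be small, as a function that jumps from $0$ to $c$ rapidly and stays flat has tiny $L^2$-deviation from its mean while still satisfying $g(r+2)=c$. The crucial ingredient that rescues the argument is the global bound $\|a\|_{L^2(\R_+)}<\infty$, which keeps $\|\tilde h'\|_{L^2[0,2]}$ uniformly bounded so that the Gagliardo--Nirenberg interpolation transfers $L^2$-smallness of $\tilde h$ into $L^\infty$-smallness, thereby propagating the decay of $D_a$ to $\Phi$.
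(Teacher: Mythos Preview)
Your approach is essentially the same as the paper's: both directions match, and for the backward direction the paper likewise uses a Cauchy--Schwarz interpolation between the $L^2$-variance of $g_{a,r}$ and the $L^2$-norm of its derivative (encoded there as Lemma~\ref{Lemma 1}, applied after splitting into real and imaginary parts) to pass from $D_a(r)=\eps_\alpha(r)$ to a sup-norm bound on $g_{a,r}$.

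There is one genuine slip in your backward step. The function $\tilde h$ is complex-valued, and a continuous complex-valued function with mean zero on $[0,2]$ need \emph{not} vanish anywhere (take $\tilde h(u)=e^{i\pi u}$). So the sentence ``since $\tilde h$ has mean zero there exists $t_0\in[0,2]$ with $\tilde h(t_0)=0$'' is false as stated, and the identity $|\tilde h(t)|^2 = 2\Re\int_{t_0}^t\overline{\tilde h}\,\tilde h'\,ds$ cannot be invoked directly. The fix is immediate and is exactly what the paper does: apply the argument to $\Re\tilde h$ and $\Im\tilde h$ separately (each is real with mean zero, hence each has a zero by the intermediate value theorem), or alternatively note that $|\tilde h|^2$ attains its mean $\|\tilde h\|_{L^2}^2/2$ somewhere and bound $|\tilde h(t)|^2$ from that point. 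Either route recovers $\|\tilde h\|_\infty^2\lesssim \|\tilde h\|_{L^2}\|\tilde h'\|_{L^2}$ and your conclusion $|\Phi(r)|\lesssim D_a(r)^{1/4}$ goes through unchanged.
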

	R. Bessonov and S. Denisov, see \cite{Bessonov20221}, established the connection between the entropy function and the Sobolev norm of the coefficient.
	\begin{Thm}[Theorem 4.1, \cite{Bessonov20221}] Assume that $a\in L^2(\R_+)$ then
		\begin{gather*}
			\sum_{n\ge 0}E_a(n)\approx \|a\|_{H^{-1}(\R)}^2 = \int_{\R}\frac{|(\F a)(\xi)|^2}{1 + \xi^2}\,d\xi,
		\end{gather*}
		where the quantity in the right-hand side is the definition of the norm in Sobolev space $H^{-1}(\R)$ and the constant in $\approx$ depends on the $\|a\|_{L^2(\R_+)}$.
	\end{Thm}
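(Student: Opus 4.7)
The plan is to combine Theorem \ref{thm: entropy and variation} with a Plancherel computation: the entropy sum reduces to $\sum_{n\ge 0}D_a(n)$, which I recognize as a weighted double integral of $|A(t)-A(s)|^2$, and then Parseval turns it into $\int|\hat a|^2/(1+\xi^2)\,d\xi$.

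By Theorem \ref{thm: entropy and variation}, $E_a(n)\approx D_a(n)$ for every $n\ge 0$ with constants depending on $\|a\|_{L^2(\R_+)}$, so it suffices to prove $\sum_{n\ge 0}D_a(n)\approx \|a\|_{H^{-1}(\R)}^2$. Applying the elementary variance identity
\begin{equation*}
2\int_r^{r+2}|f|^2\,dt-\Big|\int_r^{r+2}f\,dt\Big|^2=\tfrac12\int_r^{r+2}\!\int_r^{r+2}|f(t)-f(s)|^2\,ds\,dt
\end{equation*}
to $f=g_{a,r}$, and using the key observation that $g_{a,r}(t)-g_{a,r}(s)=A(t)-A(s)$ does not depend on $r$ (where $A(t):=\int_0^t a(u)\,du$ and $a$ is extended by zero to $\R_-$), I get $D_a(r)=\tfrac12\int_r^{r+2}\!\int_r^{r+2}|A(t)-A(s)|^2\,ds\,dt$.

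Summing over $n\ge 0$ and applying Fubini,
\begin{equation*}
\sum_{n\ge 0}D_a(n)=\tfrac12\int_\R\int_\R|A(t)-A(s)|^2\,\mu(s,t)\,ds\,dt, \quad \mu(s,t):=\sum_{n\ge 0}\chi_{[n,n+2]}(s)\chi_{[n,n+2]}(t),
\end{equation*}
where the weight satisfies $\chi_{|s-t|\le 1,\,\min(s,t)\ge 1}\le\mu(s,t)\le 2\chi_{|s-t|\le 2}$. Changing variables $(s,t)\mapsto (v,u)=(s,t-s)$ and applying Plancherel in $v$ reduces the upper and lower model integrals $\int\!\int_{|s-t|\le L}|A(t)-A(s)|^2\,ds\,dt$, $L\in[1,2]$, to
\begin{equation*}
\int_\R|\hat A(\xi)|^2\int_{-L}^{L}|e^{iu\xi}-1|^2\,du\,d\xi=\int_\R\frac{|\hat a(\xi)|^2}{\xi^2}\Big(4L-\frac{4\sin(L\xi)}{\xi}\Big)d\xi,
\end{equation*}
using $\hat a=i\xi\hat A$. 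A short Taylor expansion of $\sin$ shows that $m_L(\xi):=\xi^{-2}(4L-4\sin(L\xi)/\xi)$ is bounded, positive, and satisfies $m_L(\xi)\approx (1+\xi^2)^{-1}$ uniformly on $\R$ for $L\in[1,2]$ (it equals $2L^3/3$ at the origin and decays like $4L/\xi^2$ at infinity), which produces the desired two-sided equivalence $\sum_{n\ge 0}D_a(n)\approx \|a\|_{H^{-1}(\R)}^2$.

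The main technical subtlety concerns the lower bound, because $\mu$ vanishes in a neighborhood of the origin that the Fourier computation ignores. To handle the missing ``boundary'' region I would estimate $\int_0^1\!\int_0^1|A(t)-A(s)|^2\,ds\,dt$ by the very same Fourier calculation applied to the localized input $a\chi_{[0,2]}$, absorbing the result into the constants of the equivalence, which by hypothesis are allowed to depend on $\|a\|_{L^2(\R_+)}$. A minor secondary issue -- that $A$ may grow like $\sqrt{t}$ and hence lie outside $L^2(\R)$ -- is resolved by interpreting $\hat A=\hat a/(i\xi)$ as a tempered distribution; the uniform bound $0\le m_L(\xi)\ls (1+\xi^2)^{-1}$ guarantees that every integral in the argument converges absolutely.
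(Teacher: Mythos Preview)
This theorem is not proved in the paper at all; it is merely \emph{cited} as Theorem~4.1 of Bessonov--Denisov \cite{Bessonov20221}, so there is no ``paper's own proof'' to compare against. Your reduction to $\sum_n D_a(n)$ via Theorem~\ref{thm: entropy and variation} followed by the Plancherel computation is a natural strategy, and it cleanly delivers the upper bound $\sum_n D_a(n)\lesssim\|a\|_{H^{-1}}^2$.

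However, your treatment of the lower bound has a genuine gap. First, your pointwise bound on $\mu$ is not tight: in fact $\mu(s,t)\ge\chi_{|s-t|\le1,\;s,t\ge0}$ (take $n=0$ when $\min(s,t)<1$), so the only region truly missing from $\sum_n D_a(n)$, compared with the full Plancherel integral, is where one of $s,t$ is negative; that piece equals $2\int_0^1(1-t)|A(t)|^2\,dt$. More importantly, your proposed remedy of bounding this piece by $\|a\chi_{[0,2]}\|_{H^{-1}}^2$ and then ``absorbing the result into the constants'' does not work: what you obtain is an inequality of the form
\[
c\,\|a\|_{H^{-1}}^2 \le \sum_{n\ge0}D_a(n) + C(\|a\|_{L^2}),
\]
with an \emph{additive} error. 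Constants depending on $\|a\|_{L^2}$ means a \emph{multiplicative} constant; an additive term of size $C(\|a\|_{L^2})$ cannot be absorbed when $\|a\|_{H^{-1}}^2$ is much smaller than $\|a\|_{L^2}^2$, which is exactly the interesting regime. (Multiplication by $\chi_{[0,2]}$ is not bounded on $H^{-1}(\R)$, so there is no reason for $\|a\chi_{[0,2]}\|_{H^{-1}}$ to be controlled by $\|a\|_{H^{-1}}$.) To close the argument you would need to show directly that the boundary contribution $\int_0^1(1-t)|A(t)|^2\,dt$ is dominated by a fixed fraction of the full Plancherel integral, which requires an additional idea not present in your sketch.
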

	Theorem \ref{thm: entropy and variation} can be derived from the results in  \cite{Bessonov20221} but we give an independent proof. The proofs of Theorems \ref{thm: entropy and variation} and \ref{thm: osc var} are mostly technical, we postpone them in the end of the present paper, Section \ref{section: second theorem}.
	
	\section{Proof of Theorem \ref{thm: main equivalences of convergences}}\label{section: proof of the main theorem}
	
	\subsection{Equivalence of \texorpdfstring{\ref{cond: main thm 2}}{(B)} and \texorpdfstring{\eqref{cond 2'}}{(B')}}
	To deal with the assertion \ref{cond: main thm 2} we need to examine the properties of the class $\df_{\alpha}$. From the definition \eqref{def: decaying Fourier} we see that $\df_{\alpha}\subset \df_{\beta}$ for $\alpha \ge \beta$. In particular, $\df_1$ is the largest class. The following lemma will help us in showing that $\dfa$ consists of entire functions.
	\begin{Lem}\label{Lem: order of a sum bound}
		Let $f\colon \R_+\to \R$ be a measurable function satisfying $f(r) = \ea(r)$ with $\alpha > 1$. Let $g(x) = \sum_{n\ge 0}f(n)e^{xn}$ then $g(x)$ is bounded for $x\le 0$ and there exists a constant $c\in \R$ such that $ |g(x)|\ls \exp\left(c|x|^{\alpha^*}\right)$, where $\alpha^* = \frac{\alpha}{\alpha - 1}$.
	\end{Lem}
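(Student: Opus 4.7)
The plan is to unpack the definition $f(r)=\eps_\alpha(r)$ (so $|f(n)|\ls e^{-cn^\alpha}$ for some $c>0$) and then estimate the series $g(x)=\sum_{n\ge 0}f(n)e^{xn}$ by splitting into the two cases $x\le 0$ and $x>0$.

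For $x\le 0$, the factor $e^{xn}$ is bounded by $1$, and the tail of $\sum e^{-cn^\alpha}$ converges (since $\alpha>1$, but even $\alpha\ge 1$ works). This immediately gives $|g(x)|\le \sum_{n\ge 0}|f(n)|\ls \sum_{n\ge 0}e^{-cn^\alpha}<\infty$, so $g$ is uniformly bounded on $(-\infty,0]$.

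For $x>0$, the core idea is to balance the exponent $xn-cn^\alpha$ using Young's inequality with conjugate exponents $\alpha$ and $\alpha^*=\alpha/(\alpha-1)$. Writing $xn=(An)(x/A)$ for $A=(c\alpha/2)^{1/\alpha}$, Young gives
\begin{gather*}
xn\le \frac{(An)^\alpha}{\alpha}+\frac{(x/A)^{\alpha^*}}{\alpha^*}=\frac{c}{2}\,n^\alpha+C\,x^{\alpha^*},
\end{gather*}
for some constant $C=C(c,\alpha)$ depending only on $c$ and $\alpha$. Therefore
\begin{gather*}
|g(x)|\le \sum_{n\ge 0}|f(n)|e^{xn}\ls \sum_{n\ge 0}e^{-cn^\alpha+xn}\le e^{Cx^{\alpha^*}}\sum_{n\ge 0}e^{-cn^\alpha/2}\ls e^{Cx^{\alpha^*}},
\end{gather*}
which is the required bound.

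There is no real obstacle: the only substantive point is choosing the right pair of conjugate exponents $(\alpha,\alpha^*)$ so that the contribution of $xn$ can be absorbed into half of the decaying weight $cn^\alpha$ while leaving an $x^{\alpha^*}$ term in the exponent. Everything else is a direct comparison of series. Note that the proof does not really use that $f$ is real; the same argument goes through for complex-valued $f$.
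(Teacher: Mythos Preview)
Your proof is correct and takes a slightly different route from the paper's. The paper handles $x>0$ by splitting the sum at an index $N_0\approx (c_1 x)^{1/(\alpha-1)}$ chosen so that for $n>N_0$ the exponent $-n^{\alpha}/c_1+xn$ is below $-n$ (making the tail $O(1)$), and for $n\le N_0$ the exponent is crudely bounded by $xN_0$, which gives $N_0e^{xN_0}\ls e^{c x^{\alpha^*}}$. Your approach via Young's inequality with exponents $(\alpha,\alpha^*)$ accomplishes the same balancing in one stroke: it absorbs $xn$ into half of $cn^{\alpha}$ at the cost of an $x^{\alpha^*}$ term, so no splitting is needed and the remaining series is summed uniformly in $x$. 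Both arguments are equally elementary; yours is a bit cleaner and makes the role of the conjugate exponent $\alpha^*$ transparent, while the paper's cut-off argument is perhaps more concrete about where the transition in the exponent occurs.
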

	\begin{proof}
		When $x\le 0$ we have $|g(x)|\le \sum_{n\ge 0}|f(n)| < \infty$ because $f(n) = \ea(n)$. Take $x > 0$. From the definition of $\ea$ we know that $f(n)\ls \exp(-n^{\alpha}/c_1)$ for some constant $c_1$. Hence we have
		\begin{gather*}
			\sum_{n\ge 0}f(n)e^{xn}\ls\sum_{n\ge 0}\exp\left(-n^{\alpha}/c_1 + xn\right).
		\end{gather*}
		Let $N_0 = \left[\left(c_1(x + 1)\right)^{1/(\alpha - 1)}\right] + 1$. Then for $n > N_0$ we have $-n^{\alpha}/c_1 + xn < -n$ and 
		\begin{gather*}
			\sum_{n\ge N_0}f(n)e^{xn}\ls\sum_{n\ge N_0} e^{-n}\ls 1.
		\end{gather*}
		On the other hand, if $n\le N_0$ then $-n^{\alpha}/c_1 + xn < xN_0$ hence
		\begin{gather*}
			\sum_{n < N_0}f(n)e^{xn}\ls \sum_{n < N_0}e^{N_0x}\le N_0e^{N_0x}.
		\end{gather*}
		The bound $N_0 = O\left(x^{1/(\alpha - 1)}\right)$ as $x\to\infty$ finishes the proof.
	\end{proof}
	\begin{Lem}\label{Lem: salpha description}
		Assume that $f\in \dfa$ with some $\alpha \ge 1$. Then $f$ has an entire continuation of order not greater than $\alpha^* = \frac{\alpha}{\alpha - 1}$. Furthermore, $f$ is bounded in every horizontal upper half-plane $\Omega_{\delta} = \{z\colon \Im z > -\delta\}$. 
	\end{Lem}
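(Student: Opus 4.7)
The plan is to represent $f$ via the inverse Fourier transform, establish its entireness using the rapid decay of $\F f$ encoded in the definition of $\df_\alpha$, and then estimate $|f(x+iy)|$ by a dyadic sum in $\xi$ that turns out to match the hypothesis of Lemma~\ref{Lem: order of a sum bound} after an obvious identification. Since $\phi = \F f$ is supported on $\R_+$, for $z$ in the upper half-plane one has
\begin{align*}
f(z) = \tfrac{1}{\sqrt{2\pi}} \int_0^\infty \phi(\xi)\, e^{iz\xi}\, d\xi,
\end{align*}
and I would take this formula as the definition of the continuation to all of $\Cm$. Setting $M_n = \int_n^{n+1}|\phi(\xi)|^2\,d\xi$, the condition $f\in\dfa$ gives $M_n \le \int_n^\infty |\phi|^2\,d\xi = \ea(n)$, hence $M_n^{1/2} = \ea(n)$. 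The rapid decay of $M_n^{1/2}$ makes the defining integral absolutely convergent for every $z\in\Cm$, so differentiation under the integral sign shows $f$ is entire.

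Next, I would split the integral over $[n,n+1]$ and apply Cauchy--Schwarz on each piece. For $z = x + iy$, the bound $|e^{iz\xi}| = e^{-y\xi}$ yields on each block
\begin{align*}
\Bigl|\int_n^{n+1}\phi(\xi)\,e^{iz\xi}\,d\xi\Bigr| \le M_n^{1/2}\,\sup_{\xi\in[n,n+1]}e^{-y\xi} \le M_n^{1/2}\,e^{(y)_-(n+1)}.
\end{align*}
Summing in $n$ gives $|f(z)| \lesssim \sum_{n\ge 0} M_n^{1/2} e^{(y)_-(n+1)}$. For $y\ge 0$ this is uniformly bounded by $\sum_n M_n^{1/2}<\infty$. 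For $y<0$, the sum is (up to the harmless factor $e^{|y|}$) exactly the series $g(|y|)$ to which Lemma~\ref{Lem: order of a sum bound} applies with $f(n) = M_n^{1/2}$; for $\alpha > 1$ this yields $|f(z)|\ls \exp(c|y|^{\alpha^*})\le\exp(c|z|^{\alpha^*})$, so the order of $f$ does not exceed $\alpha^*$. For $\alpha = 1$, $\alpha^* = +\infty$ and the conclusion on the order is vacuous, as stated.

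For the boundedness on $\Omega_\delta$, the upper half-plane part $\{y\ge 0\}$ is already handled above. On the strip $-\delta < y < 0$ I have $|y|<\delta$, so
\begin{align*}
|f(z)| \ls \sum_{n\ge 0} M_n^{1/2}\, e^{\delta(n+1)} \le e^\delta \sum_{n\ge 0} \ea(n)\, e^{\delta n},
\end{align*}
which is finite: for $\alpha > 1$ because $M_n^{1/2}$ decays superexponentially, and for $\alpha = 1$ because $M_n^{1/2}\le C_{2\delta} e^{-2\delta n}$ by the definition of $\eps_1$. Since this bound is uniform in $x = \Re z$ and in $y\in(-\delta,\infty)$, the function $f$ is bounded on $\Omega_\delta$. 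There is no real obstacle here; the only point requiring care is the passage $M_n^{1/2} = \ea(n)$ (so that Lemma~\ref{Lem: order of a sum bound} applies with the original exponent $\alpha$), and the separate treatment of the $\alpha = 1$ case, where the order bound becomes trivial but the boundedness on $\Omega_\delta$ must still be extracted from the quantifier over $\delta$ in the definition of $\eps_1$.
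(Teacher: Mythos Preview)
Your proof is correct and follows essentially the same approach as the paper: represent $f$ by the inverse Fourier integral, split into blocks $[n,n+1]$, apply Cauchy--Schwarz to get a series $\sum M_n^{1/2}e^{(y)_- n}$, and invoke Lemma~\ref{Lem: order of a sum bound}. The only cosmetic difference is that the paper bounds each block by $\sqrt{\int_n^{n+1}e^{-2y\xi}\,d\xi}\cdot M_n^{1/2}$ rather than $\sup_{[n,n+1]}e^{-y\xi}\cdot M_n^{1/2}$, and is somewhat terser about the $\alpha=1$ case and the boundedness on $\Omega_\delta$, which you spell out explicitly.
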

	\begin{proof}
		let $\phi = \F f$. We know that $\supp \phi\subset{\R_+}$ and $\int_{r}^{\infty}|\phi(t)|^2\,dt = \ea(r)$ hence the integral $\frac{1}{\sqrt{2\pi}}\int_0^{\infty}\phi(t)e^{izt}\,dt$ converges for every $z\in \Cm$ and defines an entire function. This entire function coincides with $f$ on $\R$ hence $f$ is entire. Also we can write
		\begin{gather*}
			\left|\int_0^{\infty}\phi(t)e^{izt}\,dt\right|\le \int_0^{\infty}|\phi(t)|e^{-t\Im z}\,dt\le \sum_{n\ge 0}\sqrt{\int_{n}^{n + 1}e^{-t\Im z}\,dt}\sqrt{\int_n^{n + 1}|\phi(t)|^2\,dt}.
		\end{gather*}
		We have $\sqrt{\int_n^{n + 1}|\phi(t)|^2\,dt} = \eps_{\alpha}(n)$ and $\int_{n}^{n + 1}e^{-t\Im z}\,dt \approx e^{-n\Im z}$ hence the estimate of the order and the required boundedness follow from  Lemma \ref{Lem: order of a sum bound}.
	\end{proof}
	In other words, $\dfa$ for $\alpha \ge 1$ consists of entire functions of order not greater than $\alpha^*$. We can formulate a different description of the class $\df_1$.
	\begin{Prop}\label{Prop: S_1 class description}
		Let $f$ be an entire function, then $f\in \df_1$ if and only if $f\in H^2(\Omega_{\delta})$ for every upper horizontal half-plane $\Omega_{\delta} = \{z\colon \Im z > -\delta\}$.
	\end{Prop}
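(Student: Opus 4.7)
The plan is to pass to the Fourier side and reduce the claim to an elementary comparison of decay conditions on $\phi=\F f$. Under either hypothesis one has $\supp\phi\subset\R_+$: it is part of the definition of $\df_1$, and if $f\in H^2(\Omega_\delta)$ then $f|_{\Cm_+}\in H^2(\Cm_+)$ (since $\Omega_\delta\supset\Cm_+$), so the classical Paley--Wiener theorem applies. The key ingredient is the shifted Paley--Wiener identification: the biholomorphism $z\mapsto z-i\delta$ of $\Cm_+$ onto $\Omega_\delta$ shows that $f\in H^2(\Omega_\delta)$ is equivalent to $f(\cdot-i\delta)\in H^2(\Cm_+)$, and the representation
$$f(x-i\delta)=\frac{1}{\sqrt{2\pi}}\int_0^\infty\phi(t)e^{\delta t}e^{ixt}\,dt$$
identifies this with the condition $\int_0^\infty|\phi(t)|^2e^{2\delta t}\,dt<\infty$. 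Meanwhile, $f\in\df_1$ says exactly that $g(r):=\int_r^\infty|\phi(t)|^2\,dt\ls e^{-cr}$ for every $c>0$.

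It remains to show that these two families of conditions (parametrized by $\delta>0$, respectively $c>0$) on $\phi\in L^2(\R_+)$ are equivalent. The easy direction: if $\int_0^\infty|\phi(t)|^2 e^{2\delta t}\,dt<\infty$ for every $\delta>0$, then the trivial bound
$$g(r)\le e^{-2\delta r}\int_r^\infty|\phi(t)|^2 e^{2\delta t}\,dt\ls e^{-2\delta r}$$
holds for every $\delta$, so $g=\eps_1$. In the other direction, using $g'=-|\phi|^2$ and integrating by parts on $[0,T]$,
$$\int_0^T|\phi(t)|^2 e^{2\delta t}\,dt = g(0) - g(T)e^{2\delta T}+2\delta\int_0^T g(t)e^{2\delta t}\,dt;$$
if $g=\eps_1$ then, choosing any $c>2\delta$ in the bound $g(r)\ls e^{-cr}$, one has both $g(T)e^{2\delta T}\to 0$ and convergence of the last integral, yielding the claim for every $\delta>0$.

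The main point requiring care is the shifted Paley--Wiener step: one must verify that the entire function $f$ restricted to $\Omega_\delta$ coincides with the translate of $\F^{-1}(\phi(t)e^{\delta t})\in H^2(\Cm_+)$. This follows from analyticity of both sides together with their coincidence on the region $\{\Im z>\delta\}$, where the defining Fourier integrals converge absolutely (using that $\phi\in L^2(\R_+)$ and $\phi e^{\delta\cdot}\in L^2(\R_+)$ respectively). Beyond this bookkeeping, the proposition is essentially a dictionary translation of a superexponential decay condition between the spatial and Fourier sides.
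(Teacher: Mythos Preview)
Your proof is correct and follows essentially the same route as the paper: pass to the Fourier side, identify $f\in H^2(\Omega_\delta)$ with $\phi e^{\delta\cdot}\in L^2(\R_+)$ via the shifted Paley--Wiener theorem, and then observe that finiteness of $\int_0^\infty|\phi|^2 e^{2\delta t}\,dt$ for all $\delta>0$ is equivalent to $\int_r^\infty|\phi|^2\,dt=\eps_1(r)$. The paper simply asserts this last equivalence in one phrase, whereas you supply the details (the trivial bound in one direction, integration by parts in the other) and are more explicit about why the entire $f$ agrees with the shifted $H^2$ function; this extra care is welcome but does not change the argument.
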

	\begin{proof}
		Assume that $f$ belongs to the Hardy space in $\Omega_\delta$ for every $\delta > 0$.  Let $\phi$ be the Fourier transform of $f$ and $\phi_{\delta}$ be the Fourier transform of $f(x - i\delta)$. Then for every $\delta > 0$ we have $\phi_{\delta}\in L^2(\R_+)$ and $\phi_{\delta} = e^{\delta x}\phi$. Therefore the integral $\int_{\R_+}e^{2\delta x}|\phi^2(x)|\,dx$ converges for every $\delta > 0$, which is equivalent to $\int_r^{\infty }|\phi(x)|^2\,dx = \eps_1(r)$.
		
		If $\int_r^{\infty }|\phi(x)|^2\,dx = \eps_1(r)$ then $f(z) = \frac{1}{\sqrt{2\pi}}\int_0^{\infty}\phi(r)e^{irz}\, dr$, where the integral is absolutely convergent. This means $\|f\|_{H^2(\Omega_{\delta})} = \|f(x - i\delta)\|_{L^2(\R)} =\|\phi e^{\delta x}\|_{L^2(\R_+)} < \infty.$
	\end{proof}
	In the light of Proposition \ref{Prop: S_1 class description} we can reformulate Theorem \ref{thm: gubkin masters} in the following way.
	\begin{Thm}\label{thm: reformulation master}
		Assume that $a\in L^2(\R_+)$ then $E_a(r) = \eps_{1}(r)$ if and only if $\sigma$ is a.\,c.\,and for some $z_0\in \Cm_+$ we have $ \frac{\Pi - \Pi(z_0)}{z - z_0}\in \df_{1}$.
	\end{Thm}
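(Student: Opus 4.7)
The plan is to combine Theorem~\ref{thm: gubkin masters} with the characterization of $\df_1$ supplied by Proposition~\ref{Prop: S_1 class description}. The substitution $z\mapsto z+i\delta$ shows that the condition from Theorem~\ref{thm: gubkin masters} is equivalent to the assertion that $\Pi(z)/(z+i(1+\delta))\in H^2(\Omega_\delta)$ for every $\delta>0$; call this condition (I). On the other hand, fixing $z_0\in\Cm_+$ and setting $H(z):=(\Pi(z)-\Pi(z_0))/(z-z_0)$, which is automatically entire by removable singularity, Proposition~\ref{Prop: S_1 class description} rephrases the membership $H\in\df_1$ as the condition $H\in H^2(\Omega_\delta)$ for every $\delta>0$; call this condition (II). Both conditions presuppose an entire extension of $\Pi$ (for (II) this is by Lemma~\ref{Lem: salpha description}), so the theorem reduces to proving (I)$\iff$(II).

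The algebraic identity $\Pi(z)=\Pi(z_0)+(z-z_0)H(z)$ gives
\[
\frac{\Pi(z)}{z+i(1+\delta)}=\frac{\Pi(z_0)}{z+i(1+\delta)}+H(z)\cdot\frac{z-z_0}{z+i(1+\delta)}.
\]
The first summand on the right is a rational function whose only pole $-i(1+\delta)$ lies outside $\Omega_\delta$, so it automatically belongs to $H^2(\Omega_\delta)$. Hence the equivalence (I)$\iff$(II) reduces to showing that $H(z)\cdot(z-z_0)/(z+i(1+\delta))\in H^2(\Omega_\delta)$ if and only if $H\in H^2(\Omega_\delta)$. The direction ``$\Leftarrow$'' is immediate because the multiplier $(z-z_0)/(z+i(1+\delta))$ is bounded on $\Omega_\delta$.

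The main technical point is the converse: this multiplier vanishes at $z_0\in\Omega_\delta$, so naive inversion fails. I would resolve this by Blaschke factorization on $\Omega_\delta$. The Blaschke factor of $\Omega_\delta$ corresponding to the zero at $z_0$ is $B(z)=(z-z_0)/(z+2i\delta-\bar z_0)$, and division by $B$ preserves $H^2(\Omega_\delta)$ membership. The outcome is that $H(z)\cdot(z+2i\delta-\bar z_0)/(z+i(1+\delta))\in H^2(\Omega_\delta)$, and since the remaining multiplier has no zeros or poles in $\Omega_\delta$ and tends to $1$ at infinity, it is bounded above and below by positive constants on $\Omega_\delta$. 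Therefore $H\in H^2(\Omega_\delta)$, which gives (II).
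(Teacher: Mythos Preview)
Your proof is correct and follows exactly the route the paper intends: the paper states Theorem~\ref{thm: reformulation master} as a direct reformulation of Theorem~\ref{thm: gubkin masters} ``in the light of Proposition~\ref{Prop: S_1 class description}'' without supplying any details, and you have filled those details in. The Blaschke-factor step is the natural way to handle the one nonobvious point (the multiplier $(z-z_0)/(z+i(1+\delta))$ vanishes at $z_0\in\Omega_\delta$), and your computation of the Blaschke factor for $\Omega_\delta$ and the check that the residual multiplier $(z+2i\delta-\bar z_0)/(z+i(1+\delta))$ is bounded above and below on $\Omega_\delta$ are both correct.
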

	The description of the $\df_1$ class given in Proposition \ref{Prop: S_1 class description} implies that the assertions \ref{cond: main thm 2} and \eqref{cond 2'} of Theorem \ref{thm: main equivalences of convergences} are equivalent. In the following proposition we prove that the same is true for every $\alpha\ge 1$.
	\begin{Prop}\label{prop: salpha does not depend on z_0}
		Let $f$ be an entire function and $\alpha \ge 1$. If the assertion $\frac{f - f(z_0)}{z - z_0}\in \df_{\alpha}$ holds for some $z_0\in \Cm$ then it holds for every $z_0\in \Cm$.
	\end{Prop}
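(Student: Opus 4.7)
The plan is to reduce the proposition to the following closure property: for every $z_1 \in \Cm$ the operator $T_{z_1}\colon h \mapsto \frac{h(z) - h(z_1)}{z - z_1}$ preserves the class $\df_\alpha$. Setting $g_{z_0}(z) = \frac{f(z) - f(z_0)}{z - z_0}$ and using $f(z) = f(z_0) + (z - z_0) g_{z_0}(z)$ together with $f(z_1) - f(z_0) = (z_1 - z_0) g_{z_0}(z_1)$, one obtains the algebraic identity
\begin{equation*}
\frac{f(z) - f(z_1)}{z - z_1} = g_{z_0}(z) + (z_1 - z_0) \cdot \frac{g_{z_0}(z) - g_{z_0}(z_1)}{z - z_1}.
\end{equation*}
Since $\df_\alpha$ is a vector space (addition and scalar multiplication clearly preserve both the support condition on the Fourier transform and the tail decay), once $g_{z_0} \in \df_\alpha$ is given, membership of $(f - f(z_1))/(z - z_1)$ will follow from $T_{z_1} g_{z_0} \in \df_\alpha$.

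To establish the closure property, take $h \in \df_\alpha$ with $\phi := \F h$ and use the absolutely convergent representation $h(z) = \frac{1}{\sqrt{2\pi}} \int_0^\infty \phi(t) e^{izt}\, dt$ valid for all $z \in \Cm$, available as in the proof of Lemma \ref{Lem: salpha description}. Substituting the elementary identity $\frac{e^{izt} - e^{iz_1 t}}{z - z_1} = i\int_0^t e^{iz_1(t - u) + izu}\, du$ into the definition of $T_{z_1} h$ and swapping the order of integration (which is justified by absolute integrability following from the decay of $\phi$) yields
\begin{equation*}
(T_{z_1}h)(z) = \frac{1}{\sqrt{2\pi}}\int_0^\infty \psi(u)e^{izu}\,du, \qquad \psi(u) := i\int_u^\infty \phi(t) e^{iz_1(t - u)}\, dt, \quad u \ge 0.
\end{equation*}
Thus $\F(T_{z_1}h) = \psi$ is automatically supported in $\R_+$, and the proposition reduces to verifying $\int_r^\infty |\psi(u)|^2\, du = \eps_\alpha(r)$.

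The final estimate is carried out by weighted Cauchy--Schwarz with a weight matched to the decay of $\phi$. A short integration by parts shows that the condition $\int_r^\infty |\phi|^2 = \eps_\alpha(r)$ is equivalent, for $\alpha > 1$, to $\phi \in L^2(e^{ct^\alpha}\, dt)$ for some $c > 0$, and for $\alpha = 1$ to $\phi \in L^2(e^{2Mt}\, dt)$ for every $M > 0$. Splitting the integrand of $\psi$ against this weight and applying Cauchy--Schwarz reduces matters to the decay of an auxiliary integral $\int_u^\infty e^{-ct^\alpha + 2|\Im z_1|(t-u)}\, dt$ (respectively $\int_u^\infty e^{-2Mt + 2|\Im z_1|(t-u)}\, dt$ with $M > |\Im z_1|$ arbitrary in the linear case); in both cases this factor decays strongly enough in $u$ to produce $|\psi(u)|^2 = \eps_\alpha(u)$, which integrates to the desired $\int_r^\infty |\psi|^2 = \eps_\alpha(r)$.

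The main obstacle is the regime $\Im z_1 \le 0$, in which the kernel $e^{iz_1(t - u)}$ grows exponentially in $t - u$ rather than decaying. To overcome this one must exploit the full strength of the $\df_\alpha$ hypothesis on $\phi$: faster-than-any-exponential decay when $\alpha = 1$, stretched-exponential decay when $\alpha > 1$. The weighted Cauchy--Schwarz outlined above is precisely tailored to transfer this decay of $\phi$ into decay of $\psi$ with no loss of the $\alpha$-class.
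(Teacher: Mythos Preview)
Your proposal is correct. Both your argument and the paper's reduce to the same pointwise formula for the Fourier transform of the new difference quotient in terms of $\phi = \F\bigl(\tfrac{f-f(z_0)}{z-z_0}\bigr)$, namely an expression involving $e^{-iz_1 u}\int_u^\infty \phi(t)e^{iz_1 t}\,dt$, and then estimate this integral using the $\eps_\alpha$ decay of $\int_r^\infty|\phi|^2$.

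The route you take to that formula is different and somewhat more streamlined. The paper first invokes Proposition~\ref{Prop: S_1 class description} (the $H^2$ characterization of $\df_1$) to guarantee a~priori that $\tfrac{f-f(z_1)}{z-z_1}\in L^2(\R)$, and then derives the relation between the two Fourier transforms through a chain of integrations by parts and algebraic identities. You instead isolate the closure statement ``$T_{z_1}$ preserves $\df_\alpha$'' via the resolvent-type identity $g_{z_1}=g_{z_0}+(z_1-z_0)T_{z_1}g_{z_0}$, and obtain $\F(T_{z_1}h)$ directly by Fubini from the kernel identity for $(e^{izt}-e^{iz_1 t})/(z-z_1)$. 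This bypasses the preliminary reduction to $\df_1$ entirely: your weighted Cauchy--Schwarz bound gives $\psi\in L^2(\R_+)$ and its $\eps_\alpha$ tail simultaneously. The paper's approach has the advantage of making the relation between the two difference quotients completely explicit at the level of Fourier transforms; yours has the advantage of conceptual clarity and of treating the $\alpha=1$ and $\alpha>1$ cases uniformly through the weighted-$L^2$ reformulation of the $\eps_\alpha$ condition.
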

	\begin{proof}
		We have $\dfa\subset\df_1$ hence $\frac{f - f(z_0)}{z - z_0}\in\df_1$. The characterization of $\df_1$ from Proposition \ref{Prop: S_1 class description} implies $\frac{f - f(z_1)}{z - z_1}\in\df_1$ for every $z_1\in \Cm$. Let $\phi = \F\left(\frac{f - f(z_0)}{z - z_0}\right)$ and $\psi = \F\left(\frac{f - f(z_1)}{z - z_1}\right)$. We have $\phi, \psi\in L^2(\R_+)$ and 
		\begin{gather}\label{eq: F prop 2.4}
			f(z) = f(z_1) + \frac{z - z_1}{\sqrt{2\pi}}\int_{0}^{\infty}\phi(x)e^{izx}\,dx = f(z_2) + \frac{z - z_2}{\sqrt{2\pi}}\int_{0}^{\infty}\psi(x)e^{izx}\,dx,
		\end{gather}
		where the integrals are absolutely convergent for every $z\in \Cm$. Consider the functions
		\begin{gather*}
			\Phi(t) = \int_t^{\infty}\phi(x)e^{iz_2 x}\, dx, \qquad \Psi(t) = \int_t^{\infty}\psi(x)e^{iz_1 x}\, dx.
		\end{gather*}
		The proposition will follow from the equality 
		\begin{gather}\label{eq: prop 2.4 main}
			\psi(x) = \phi(x)- i(z_1 - z_2)\Phi(x)e^{-iz_2x}.
		\end{gather}
		Indeed, the assertion $\int_r^{\infty}|\phi(t)|^2\,dt=\ea(r)$ implies $\Phi(t) = \eps_{\alpha}(t)$ by the integration by parts and the required $\int_r^{\infty}|\psi(t)|^2\,dt=\ea(r)$ then follows from \eqref{eq: prop 2.4 main}.
		\medskip
		
		Let us focus on \eqref{eq: prop 2.4 main}. From \eqref{eq: F prop 2.4} we get
		\begin{gather}
			\nonumber
			f(z_2) = f(z_1) + \frac{z_2 - z_1}{\sqrt{2\pi}}\Phi(0),\qquad f(z_1) = f(z_2) + \frac{z_1 - z_2}{\sqrt{2\pi}}\Psi(0),
			\\
			\label{eq: psi(0) = phi(0)}
			\Psi(0)= \Phi(0) = \sqrt{2\pi}\frac{f(z_2) - f(z_1)}{ z_2 - z_1}.
		\end{gather}
		For $z\in \Cm$ we have
		\begin{align*}
			\int_{0}^{\infty}\phi(x)e^{izx}\,dx &= \int_{0}^{\infty}\phi(x)e^{iz_2x}\cdot e^{i(z - z_2)x}\,dx 
			\\
			&= -\Phi(x)e^{i(z - z_2)x}\Big|_0^{\infty} + i(z - z_2)\int_{0}^{\infty}\Phi(x)\cdot e^{i(z - z_2)x}\, dx
			\\
			&=
			\Phi(0) + i(z - z_2)\int_{0}^{\infty}\Phi(x)\cdot e^{i(z - z_2)x}\, dx.
		\end{align*}
		Similar transformation of  $\int_{0}^{\infty}\psi(x)e^{izx}\,dx$ in \eqref{eq: F prop 2.4} gives
		\begin{align*}
			f(z_1) &+ \frac{\Phi(0)(z - z_1)}{\sqrt{2\pi}} + \frac{i(z - z_1)(z - z_2)}{\sqrt{2\pi}}\int_{0}^{\infty}\Phi(x)\cdot e^{i(z - z_2)x}\, dx
			\\
			&=f(z_2) + \frac{\Psi(0)(z - z_2)}{\sqrt{2\pi}} + \frac{i(z - z_1)(z - z_2)}{\sqrt{2\pi}}\int_{0}^{\infty}\Psi(x)\cdot e^{i(z - z_1)x}\, dx.
		\end{align*}
		Regrouping the terms, we get
		\begin{align*}
			\sqrt{2\pi}(f(z_1) - f(z_2)) &+  z(\Phi(0) - \Psi(0)) + (\Psi(0)z_2 - \Phi(0)z_1)
			\\
			&=i(z - z_1)(z - z_2)\int_{0}^{\infty}\left(\Phi(x)e^{- iz_2x}-\Psi(x)e^{- iz_1x}\right) e^{izx}\, dx.
		\end{align*}
		The left-hand side vanishes because of \eqref{eq: psi(0) = phi(0)}. Therefore we get $\Phi(x)e^{-iz_2x} - \Psi(x)e^{-iz_1x}= 0$ or $\Psi(x) = \Phi(x)e^{i(z_1 - z_2)x}$. By the definition of $\Phi$ and $\Psi$ we have $\Phi'(x) = -\phi(x)e^{iz_2x}$ and $\Psi'(x) = -\psi(x)e^{iz_1x}$. Taking the derivative in the previous equality, we get
		\begin{gather*}
			-\psi(x)e^{iz_1 x}  = -\phi(x)e^{iz_2x}\cdot e^{i(z_1 - z_2)x} + \Phi(x)\cdot i(z_1 - z_2)e^{i(z_1 - z_2)x},
		\end{gather*}
		which is equivalent to \eqref{eq: prop 2.4 main}. The proof is finished.
	\end{proof}
	\subsection{Assertion \ref{cond: main thm 3}. Decaying solution of Krein system}
	In this subsection we prove that the assertion \ref{cond: main thm 3} of Theorem \ref{thm: main equivalences of convergences} implies assertions \ref{cond: main thm 1} and \ref{cond: main thm 2} and besides that gives other important information about $\Pi$. The following Lemma will be useful to us,
	\begin{Prop}\label{prop: from small p}
		Assume that $a\in L^2(\R_+)$, $\alpha \ge 1$ and $z_0\in \Cm_+$ are such that $P(r,z_0)= \eps_{\alpha}(r)$. Then $\sigma$ is a.\,c., $a\in \osc_\alpha$ and $\Pi$ has an analytic continuation into the whole complex plane such that $\Pi(\ol{z_0}) = 0$ and $\frac{\Pi(z)}{z - \ol{z_0}}\in \dfa$. 
	\end{Prop}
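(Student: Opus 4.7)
My plan is to exploit the Christoffel--Darboux identity \eqref{first CD formula} with $\mu=z_0$ to upgrade the convergence $P_*(r,\lambda)\to\Pi_\gamma(\lambda)$ (known only in $\Cm_+$ from Lemma~\ref{Lem: convergence in Krein theorem}) to uniform-on-compacts convergence on all of $\Cm$, which simultaneously produces the entire extension of $\Pi_\gamma$ with $\Pi_\gamma(\overline{z_0})=0$ and the absolute continuity of $\sigma$. Rewriting \eqref{first CD formula} as $P_*(r,\lambda)\overline{P_*(r,z_0)}=P(r,\lambda)\overline{P(r,z_0)}-i(\lambda-\overline{z_0})\int_0^r P(s,\lambda)\overline{P(s,z_0)}\,ds$, I observe that Proposition~\ref{Prop: Krein system estimates} bounds $|P(r,\lambda)|$ uniformly on compacts $K\subset\Cm$ by an exponential in $r$ that is absorbed by $|P(r,z_0)|=\eps_\alpha(r)$ (the hypothesis $\alpha\ge 1$ is used here), so $P(r,\lambda)\overline{P(r,z_0)}=\eps_\alpha(r)$ uniformly in $\lambda\in K$ and the integral converges absolutely and uniformly. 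Since $\Pi_\gamma(z_0)\ne 0$ by outerness, dividing by $\overline{P_*(r,z_0)}\to\overline{\Pi_\gamma(z_0)}$ shows that $P_*(r,\lambda)$ converges uniformly on compacts of $\Cm$ to the entire function
\[
F(\lambda)\;:=\;\frac{-i(\lambda-\overline{z_0})}{\overline{\Pi_\gamma(z_0)}}\int_0^\infty P(s,\lambda)\overline{P(s,z_0)}\,ds.
\]
Lemma~\ref{Lem: convergence in Krein theorem} identifies $F=\Pi_\gamma$ on $\Cm_+$, so $F$ is the entire extension of $\Pi_\gamma$; $\Pi_\gamma(\overline{z_0})=0$ from the factor $(\lambda-\overline{z_0})$, and uniform convergence $|P_*(r,x)|\to|\Pi(x)|$ on compacts of $\R$ together with Lemma~\ref{Lem: convergence implies a.c.} yields $\sigma$ absolutely continuous.

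For $G(\lambda):=\Pi_\gamma(\lambda)/(\lambda-\overline{z_0})=-i\overline{\Pi_\gamma(z_0)}^{-1}\int_0^\infty P(s,\lambda)\overline{P(s,z_0)}\,ds$, my plan is to invoke the standard transmutation representation $P_*(s,\lambda)=1+\int_0^s B(s,t)e^{i\lambda t}\,dt$ available for Krein systems with $a\in L^2$, which via \eqref{Krein system reflection formula} becomes $P(s,\lambda)=e^{i\lambda s}+\int_0^s\overline{B(s,s-v)}e^{i\lambda v}\,dv$. Substitution and Fubini produce
\[
G(\lambda)=c\int_0^\infty e^{i\lambda v}\Psi(v)\,dv,\qquad \Psi(v)=\overline{P(v,z_0)}+\int_v^\infty\overline{P(s,z_0)B(s,s-v)}\,ds,
\]
so $\supp\F G\subset\R_+$. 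The bound $\int_r^\infty|\Psi(v)|^2\,dv=\eps_\alpha(r)$ will follow from Cauchy--Schwarz in $s$ applied to the second summand, combined with $\|P(\cdot,z_0)\|_{L^2([r,\infty))}^2=\eps_\alpha(r)$ and the $L^2$ bounds on the kernel $B$ implied by $a\in L^2$. This gives $G\in\dfa$.

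For $a\in\osc_\alpha$, I first upgrade the convergence to the quantitative bound $P_*(r,\lambda)-\Pi_\gamma(\lambda)=\eps_\alpha(r)$ uniformly for $\lambda$ on compacts, by bounding each summand in the CD identity together with $P_*(r,z_0)-\Pi_\gamma(z_0)=\int_r^\infty a(s)P(s,z_0)\,ds=\eps_\alpha(r)$ via Cauchy--Schwarz. Choose $\lambda_1\in\R$ with $\Pi(\lambda_1)\ne 0$ (possible since $\Pi$ is a nontrivial entire function, so its real zeros are isolated). For real $\lambda_1$, \eqref{second CD formula} gives $|P(r,\lambda_1)|=|P_*(r,\lambda_1)|$, and since $(P,P_*)$ never vanish simultaneously (ODE uniqueness from $(P,P_*)(0)=(1,1)$), this modulus is everywhere positive; continuity together with $\lim_{r\to\infty}|P(r,\lambda_1)|=|\Pi(\lambda_1)|>0$ then gives a uniform lower bound $|P(r,\lambda_1)|\ge c>0$. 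Setting $F(r):=\int_r^\infty a(s)P(s,\lambda_1)\,ds=P_*(r,\lambda_1)-\Pi_\gamma(\lambda_1)=\eps_\alpha(r)$ and using $a(s)=-F'(s)/P(s,\lambda_1)$, integration by parts (with $P'(s,\lambda_1)=i\lambda_1 P-\overline{a}P_*$) produces
\[
\int_r^\infty a(s)\,ds=\frac{F(r)}{P(r,\lambda_1)}-i\lambda_1\int_r^\infty\frac{F(s)}{P(s,\lambda_1)}\,ds+\int_r^\infty\frac{F(s)\overline{a(s)}P_*(s,\lambda_1)}{P(s,\lambda_1)^2}\,ds,
\]
each term being $\eps_\alpha(r)$ (the last by Cauchy--Schwarz against $\|a\|_{L^2}$), so $a\in\osc_\alpha$. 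The main obstacle is the second paragraph: establishing the required $L^2$ bounds on the transmutation kernel $B$ demands a standard but delicate Volterra/iteration argument from Krein's theory, which may alternatively be bypassed by a Krein--de~Branges isometric identification with an $H^2(\Cm_+)$-type space.
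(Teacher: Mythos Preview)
Your first paragraph (entire extension of $\Pi_\gamma$, the zero at $\ol{z_0}$, absolute continuity via Lemma~\ref{Lem: convergence implies a.c.}) is correct and is exactly what the paper does.

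Your route to $a\in\osc_\alpha$ via a real point $\lambda_1$ with $\Pi(\lambda_1)\ne 0$ and the lower bound $|P(r,\lambda_1)|\ge c>0$ is a legitimate alternative; the paper instead evaluates at $\lambda=\ol{z_0}$ (where $\Pi_\gamma$ vanishes), passes through the reflection formula \eqref{Krein system reflection formula} to obtain $\int_r^\infty a(x)e^{ix\ol{z_0}}\,dx=\eps_\alpha(r)$, and then integrates by parts once. Both arguments are short; yours trades the reflection trick for the elementary nonvanishing of $P$ on the real line.

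The weak spot is your second paragraph. The transmutation representation is correct in form, but the Cauchy--Schwarz step you sketch requires $\sup_{v}\int_v^{\infty}|B(s,s-v)|^2\,ds<\infty$, an integral of the kernel along diagonals in the $(s,t)$-plane; the standard bounds from Denisov's survey control $\int_0^s|B(s,t)|^2\,dt$, which is a different slice, so you would have to reopen the Volterra iteration. This is the detour you yourself flag, and it is unnecessary. The paper bypasses the kernel entirely by observing that $k_{r,z_0}(\lambda)=\frac{1}{2\pi}\int_0^r P(s,\lambda)\ol{P(s,z_0)}\,ds$ already lies in $PW_{[0,r]}$ by \eqref{eq: reproducing kernel in PW}; hence $(\F G)|_{[r,\infty)}$ coincides with the Fourier transform of the tail $\int_r^\infty P(s,\cdot)\ol{P(s,z_0)}\,ds$. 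Its $L^2(\R)$ norm is then estimated by splitting $\R$ into $M=\{|\Pi|\le 2\}$, where $dx\ls d\sigma$ and the spectral isometry \eqref{isometry property of spectral measure} turns the tail into $\|\mathbf{1}_{[r,\infty)}\ol{P(\cdot,z_0)}\|_{L^2(\R_+)}=\eps_\alpha(r)$, and $\R\setminus M$, which has finite Lebesgue measure by Theorem~\ref{thm: szego to hardy} and Chebyshev, so the uniform $L^\infty$ bound (again $\eps_\alpha(r)$ from Proposition~\ref{Prop: Krein system estimates}) suffices. This is precisely the ``Krein--de~Branges isometric identification'' you allude to at the end, and it uses only objects already set up in the paper.
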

	\begin{proof}
		Substitute $z_0$ for $\mu$ into the Christoffel-Darboux formula \eqref{first CD formula}:
		\begin{gather}
			\label{eq: rewritten C-D}
			\int_0^r P(s,\lambda)\ol{P(s,z_0)}\,ds = i\frac{P_*(r,\lambda)\ol{P_*(r,z_0)} - P(r,\lambda)\ol{P(r,z_0)}}{\lambda - \ol{z_0}}.
		\end{gather}
		We know that $|P(s,\lambda)|$ is bounded by some exponential function in $s$ by Proposition \ref{Prop: Krein system estimates} and $P(s, z_0) = \eps_{\alpha}(s)$,  hence the integral 
		\begin{gather}
			\label{eq: definition of F}
			F(\lambda) = \int_0^{\infty} P(s,\lambda)\ol{P(s,z_0)}\,ds
		\end{gather}
		converges absolutely for every $\lambda\in \Cm$ and defines an entire function. In particular, $P(r,\lambda)\ol{P(r,z_0)}\to 0$ as $r\to\infty$. If $\lambda\in \Cm_+$ then from Lemma \ref{Lem: convergence in Krein theorem} we have $P_*(r,\lambda)\ol{P_*(r,z_0)} \to \Pi_{\gamma}(\lambda)\ol{\Pi_{\gamma}(z_0)}$ as $r\to\infty$ hence the right-hand side of \eqref{eq: rewritten C-D} converges as $r\to\infty$ and 
		\begin{gather*}
			F(\lambda) = \frac{i\Pi_{\gamma}(\lambda)\ol{\Pi_{\gamma}(z_0)}}{\lambda - \ol{z_0}} = \frac{i\Pi(\lambda)\ol{\Pi(z_0)}}{\lambda - \ol{z_0}}, \quad \lambda \in \Cm_+.
		\end{gather*}
		Therefore $\Pi(\lambda) = \frac{(\lambda - \ol{z_0})F(\lambda)}{i \ol{\Pi(z_0)}}$ is entire with $\Pi(\ol{z_0}) = 0$ as claimed. Furthermore,  for every $\lambda\in \Cm$ we get the limit relation
		\begin{gather}
			\label{eq: limit relation decaying solution}
			\lim_{r\to\infty}P_*(r, \lambda) = \Pi_{\gamma}(\lambda).
		\end{gather}
		Lemma \ref{Lem: convergence implies a.c.} then implies that $\sigma$ is absolutely continuous. 
		The estimate we used to establish the convergence of the integral in \eqref{eq: definition of F} is uniform in $\{\Im \lambda > -\delta\}$ for every $\delta \ge 0$. Therefore $F$ is bounded in every upper horizontal half-plane. In particular, $F$ is bounded on $\R$. Consider the set
		\begin{gather}\label{eq: M set}
			M = \{x\in \R\colon |\Pi(x)| \le 2\}.
		\end{gather}
		We have $|F(z)|\ls |(z - \ol{z_0})^{-1}|$ on $M$ therefore $\|F\|_{L^2(M)} < \infty$.  By Theorem \ref{thm: szego to hardy} there exists $h\in H^2(\Cm_+)$ such that $\Pi_{\gamma}^{-1} = 1 + h$.  If $x\notin M$ then $|\Pi(x)| > 2$ and $|h(x)|> 1/2$. Consequently, the Lebesgue measure of the set $\R\setminus M$ is bounded by $\|h\|_{L^2(\R)}$ by the Chebyshev inequality. Therefore $\|F\|_{L^2(\R\setminus M)}\ls \|F\|_{L^{\infty}(\R)} < \infty$.
		Hence $F\in L^2(\R)$ and 
		\begin{gather*}
			\frac{\Pi(z)}{z - \ol{z_0}} = \frac{1}{i\Pi(z_0)}F(z)\in L^2(\R).
		\end{gather*}
		To prove $\frac{\Pi(z)}{z - \ol{z_0}} \in \dfa$ we need to show that $\F F$ decays very rapidly. 
		We have 
		\begin{gather*}
			F(z) =  \int_0^{r} P(s,z)\ol{P(s,z_0)}\,ds + \int_r^{\infty} P(s,z)\ol{P(s,z_0)}\,ds.
		\end{gather*}
		The first term is the function $k_{r, z_0}(z)\in PW_{[0,r]}$, recall \eqref{eq: reproducing kernel in PW}. Let 
		$f_r$ be the inverse Fourier transform of the second term. We know that $f_r$ and $\F F$ coincide on $[r,\infty)$ hence
		\begin{gather*}
			\|\F^{-1} F\|_{L^2[r, +\infty)} = \|f_r\|_{L^2[r, +\infty)} \le \|f_r\|_{L^2(\R)}= \left\|\int_{r}^{\infty} P(x, z)\ol{P(x, z_0)}\, dx\right\|_{L^2(\R)}.
		\end{gather*}
		By the argument similar to the one we used to estimate $\|F\|_{L^{\infty}(\R)}$ we get 
		\begin{gather*}
			\left\|\int_{r}^{\infty} P(x, z)\ol{P(x, z_0)}\, dx\right\|_{L^{\infty}(\R)} = \eps_{\alpha}(r), \quad r\to\infty.
		\end{gather*}
		Let $M$ be as in \eqref{eq: M set}. The Lebesgue measure of $\R\setminus M$ is finite hence 
		\begin{gather*}
			\left\|\int_{r}^{\infty} P(x, z)\ol{P(x, z_0)}\, dx\right\|_{L^{2}(\R\setminus M)} \ls\left\|\int_{r}^{\infty} P(x, z)\ol{P(x, z_0)}\, dx\right\|_{L^{\infty}(\R)} = \eps_{\alpha}(r).
		\end{gather*}
		On the other hand, on $M$ we have $dz\ls |\Pi|^{-2}dz=d\sigma(z)$ hence
		\begin{gather*}
			\left\|\int_{r}^{\infty} P(x, z)\ol{P(x, z_0)}\, dx\right\|_{L^{2}( M)}\ls \left\|\int_{r}^{\infty} P(x, z)\ol{P(x, z_0)}\, dx\right\|_{L^{2}(\R, \sigma)}
			\\
			= \sqrt{2\pi}\left\|\mathcal{O}\left(\mathbf{1}_{[r, \infty)
			}\ol{P(x, z_0)}\right)\right\|_{L^{2}(\R, \sigma)} = \sqrt{2\pi}\left\|\mathbf{1}_{[r, \infty)
			}\ol{P(x, z_0)}\right\|_{L^{2}(\R)} = \eps_{\alpha}(r),
		\end{gather*}
		by the isometry property \eqref{isometry property of spectral measure} of the spectral measure applied for $f(x) = \mathbf{1}_{[r,\infty)}\ol{P(x, z_0)}$. This finishes the first part of the proof of the proposition.
		\medskip
		
		Now we focus on the rate of convergence of $\int_r^{\infty}a(t)\,dt$. Differential equation in the Krein system \eqref{Krein system} for $P_*$ and \eqref{eq: limit relation decaying solution} give
		\begin{gather*}
			\Pi_{\gamma}(\lambda) - P_*(r, \lambda) = -\int_r^{\infty} a(x) P(x,\lambda)\, dx, \quad \lambda\in \Cm.
		\end{gather*}
		For $\lambda = z_0$ this becomes
		\begin{gather}
			\label{eq: P_* - Pi cauchy-Schwarz}
			|P_*(r,z_0) - \Pi_{\gamma}(z_0)| = \left| \int_{r}^{\infty} a(x) P(x,z_0) \, dx \right| \le\norm[a]_{L_2(\R_+)}\cdot\sqrt{\int_{r}^{\infty}|P(x, z_0)|^2\,dx}  = \eps_{\alpha}(r).
		\end{gather}
		Previously we have proved $\Pi_\gamma(\ol{z_0}) =\Pi(\ol{z_0}) = 0$ hence
		\begin{gather*}
			P_*(r,\ol{z_0}) = P_*(r,\ol{z_0}) - \Pi_{\gamma}(\ol{z_0}) = \int_r^{\infty} a(x) P(x,\ol{z_0})\, dx.
		\end{gather*}
		Applying the reflection formula \eqref{Krein system reflection formula} we get 
		\begin{align*}
			P_*(r,\ol{z_0}) &= \int_r^{\infty} a(x) e^{ix\ol{z_0}}\ol{P_*(x,z_0)}\, dx
			\\
			&= \ol{\Pi_{\gamma}(z_0)}\int_r^{\infty} a(x)e^{ix\ol{z_0}} \, dx + \int_r^{\infty} a(x) e^{ix\ol{z_0}}\left[\ol{P_*(x,z_0)} - \ol{\Pi_{\gamma}(z_0)}\right]\, dx.
		\end{align*}
		The second integral is absolutely convergent and is $\eps_{\alpha}(r)$ by \eqref{eq: P_* - Pi cauchy-Schwarz} and the Cauchy-Schwarz inequality. The reflection formula \eqref{Krein system reflection formula} implies $|P_*(r,\ol{z_0})| = \left|e^{i \ol{z_0} r}\ol{P(r,z_0)}\right| = \eps_{\alpha}(r)$ therefore the improper integral  $\int_r^{\infty} e^{ix\ol{z_0}}a(x) \, dx$ converges and 
		\begin{align*}
			\left|\int_r^{\infty} e^{ix\ol{z_0}}a(x)\, dx\right|&\le \frac{|P_*(r,\ol{z_0})|}{|\Pi_{\gamma}(z_0)|} 
			+ \frac{1}{|\Pi_{\gamma}(z_0)|}\left|\int_r^{\infty} a(x) e^{ix\ol{z_0}}\left[\ol{P_*(r,z_0)} - \ol{\Pi_{\gamma}(z_0)}\right] dx\right| = \eps_{\alpha}(r).
		\end{align*}
		Let $A(r) = \int_r^{\infty} e^{ix\ol{z_0}}a(x)\, dx =\eps_\alpha(r)$. We have 
		\begin{gather*}
			\int_r^{\infty}a(x)\,dx = \int_r^{\infty} e^{ix\ol{z_0}}a(x)\cdot e^{-ix\ol{z_0}}\,dx = -A(x)e^{-ix\ol{z_0}}\Big|_r^{\infty} - i\ol{z_0}\int_r^{\infty} A(x)\cdot e^{-ix\ol{z_0}}\,dx.
		\end{gather*}
		Both terms in the right-hand side of the equality are $\eps_{\alpha}(r)$. Therefore $a$ is rapidly oscillating and $a\in \osc_{\alpha}$.
	\end{proof}
	
	\subsection{Assertion \ref{cond: main thm 1}. Krein system with oscillating potential}
	\begin{Prop}\label{prop: conv of p star and p}
		If $a\in \osc_{\alpha}$ for some $\alpha\ge 1$ then $\Pi$ extends analytically into the whole complex plane $\Cm$ and for every $z\in \Cm$ we have
		\begin{gather*}
			|P_*(r, z) - \Pi(z)| = (1 + |z|)\eps_{\alpha}(r)
		\end{gather*}
		uniformly in the strip $\mathcal{U}_{\delta} =\{z\colon \delta > \Im z > -\delta\}$ for every $\delta > 0$. Moreover, if $\alpha > 1$ then the order of $\Pi$ is not greater than $\alpha^* = \frac{\alpha}{\alpha - 1}$.
	\end{Prop}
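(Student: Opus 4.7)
The plan is to derive an integral equation for $P_*(r,z)$ by a single integration by parts in the defining identity $P_*(r,z) = 1 - \int_0^r a(s)P(s,z)\,ds$, using the reflection formula \eqref{Krein system reflection formula} to substitute $P(s,z) = e^{izs}\overline{P_*(s,\overline z)}$, and then to read off both the limit as $r\to\infty$ and its rate. Specifically, set $\phi_z(r) := \int_r^\infty a(s)e^{izs}\,ds$, so that $\phi_z'(s) = -a(s)e^{izs}$, and note that $\frac{d}{ds}\overline{P_*(s,\overline z)} = -\overline{a(s)}e^{-izs}P_*(s,z)$ by a second application of \eqref{Krein system reflection formula}. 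Integration by parts then yields
\begin{equation*}
    P_*(r,z) = 1 - \phi_z(0) + \phi_z(r)\overline{P_*(r,\overline z)} + \int_0^r \phi_z(s)\overline{a(s)}e^{-izs}P_*(s,z)\,ds.
\end{equation*}

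The first step is to verify that $\phi_z(r)$ is well-defined and satisfies $|\phi_z(r)|\ls(1+|z|)\eps_{\alpha}(r)$ uniformly in $z\in \mathcal U_{\delta}$. This is where the hypothesis $a\in\osc_{\alpha}$ enters: a further integration by parts with $A(r) := \int_r^{\infty}a(s)\,ds = \eps_{\alpha}(r)$ gives
\begin{equation*}
    \phi_z(r) = A(r)e^{izr} + iz\int_r^{\infty}A(s)e^{izs}\,ds,
\end{equation*}
and the estimate $|e^{izs}|\le e^{s\delta}$ combined with the elementary remark that $\eps_{\alpha}$ beats any fixed exponential (for $\alpha>1$ by halving the constant in the exponent; for $\alpha=1$ by choosing the decay rate larger than $\delta$) produces the claimed bound.

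With this bound and Proposition \ref{Prop: Krein system estimates} providing $|P_*(s,w)|\ls e^{Cs}$ uniformly for $w\in \mathcal U_\delta$ with $C = \|a\|_{L^2(\R_+)}+\delta$, both the boundary term $\phi_z(r)\overline{P_*(r,\overline z)}$ and the integrand in the integral equation are controlled, up to the factor $1+|z|$, by $\eps_{\alpha}$-type quantities that absorb the exponential growth; for the tail of the integral one applies Cauchy--Schwarz against $a\in L^2(\R_+)$. Consequently $P_*(r,z)$ converges as $r\to\infty$ to
\begin{equation*}
    L(z) := 1 - \phi_z(0) + \int_0^{\infty}\phi_z(s)\overline{a(s)}e^{-izs}P_*(s,z)\,ds
\end{equation*}
uniformly on compact subsets of $\Cm$, with $|P_*(r,z) - L(z)|\ls (1+|z|)\eps_{\alpha}(r)$ uniformly in $\mathcal U_\delta$. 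Since each $P_*(r,\cdot)$ is entire and the convergence is locally uniform, $L$ is entire; Lemma \ref{Lem: convergence in Krein theorem} then identifies $L$ with the analytic continuation of $\Pi$ to $\Cm$.

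For the order bound when $\alpha>1$, I would combine $|\Pi(z)|\le |P_*(r,z)| + |P_*(r,z)-\Pi(z)|\ls e^{r(\|a\|_{L^2(\R_+)}+|z|)} + (1+|z|)e^{-cr^{\alpha}}$ with the optimal choice $r\approx |z|^{1/(\alpha-1)}$, which balances the two exponents and yields $|\Pi(z)|\ls \exp(C'|z|^{\alpha^*})$. The principal technical obstacle is the bookkeeping across the two regimes $\alpha=1$ and $\alpha>1$: in the $\alpha=1$ case no single decay rate can be fixed in advance, so each invocation of $\eps_1$ has to be read as ``choose a rate sufficiently fast relative to $\delta$ and $\|a\|_{L^2(\R_+)}$'' in order for the integration-by-parts argument to close.
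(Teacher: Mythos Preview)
Your approach is correct and runs parallel to the paper's. Both arguments substitute the reflection formula $P(s,z)=e^{izs}\overline{P_*(s,\bar z)}$ into the defining relation for $P_*$ and then exploit the antiderivative $A(r)=\int_r^\infty a$ via one integration by parts. You package this as a closed integral equation for $P_*(r,z)$ and pass to the limit directly; the paper instead bounds the increment $P_*(r_1,z)-P_*(r,z)$ by expanding $\overline{P_*(t,\bar z)}=1-\overline{\int_0^t aP(\cdot,\bar z)}$ and applying Fubini, then telescopes over unit intervals. These are two presentations of the same computation: your boundary term $\phi_z(r)\overline{P_*(r,\bar z)}$ and tail integral correspond exactly to the paper's factor $\sup_{s\ge r}\bigl|\int_s^{r_1}ae^{itz}\,dt\bigr|\cdot\bigl(1+\int_0^{r_1}|aP|\bigr)$.

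For the order bound you diverge slightly: the paper sums the increment estimates and invokes Lemma~\ref{Lem: order of a sum bound}, whereas you split $|\Pi(z)|\le |P_*(r,z)|+|P_*(r,z)-\Pi(z)|$ and optimize over $r$. Your version works, but as written it has a small gap: the estimate $|P_*(r,z)-\Pi(z)|\ls (1+|z|)e^{-cr^\alpha}$ is only uniform in a fixed strip $\mathcal U_\delta$, while for the order you must let $\delta\sim|\Im z|$ grow. Tracing your own formula, the error term actually carries an extra factor $e^{Cr|z|}$ coming from $|P_*(s,\bar z)|$ and $|e^{\pm izs}|$ in the tail, so the honest bound is $(1+|z|)e^{-c_0 r^\alpha + Cr|z|}$. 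With $r=K|z|^{1/(\alpha-1)}$ and $K$ large enough that $c_0K^\alpha>CK$, both this term and $|P_*(r,z)|\ls e^{Cr|z|}$ are $O(e^{C'|z|^{\alpha^*}})$, recovering the claimed order; you should make this dependence explicit.
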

	\begin{proof}
		Fix some $\delta > 0$. Take a point $z\in\mathcal{U}_{\delta}$ and two positive numbers $r_1 > r$. Using differential equation from Krein system \eqref{Krein system} for $P_*(r, z)$, the reflection formula \eqref{Krein system reflection formula} and  differential equation for $P_*(r, \ol{z})$ one more time we get
		\begin{align*}
			P_*(r_1, z) - P_*(r, z) &= -\int_{r}^{r_1} a(t) P(t,z)\,dt= -\int_{r}^{r_1} a(t) e^{itz}\ol{P_*(t,\ol{z})}\,dt
			\\
			&=- \int_{r}^{r_1}a(t)e^{itz} \ol{\left[1 - \int_0^ta(s)P(s,\ol{z})\,ds \right]}dt
			\\
			&= - \int_{r}^{r_1}a(t) e^{itz}dt + \int_0^{r_1}\ol{a(s)P(s,\ol{z})}\left[\int_{\max(r,s)}^{r_1}a(t)e^{itz}dt\right]ds.
		\end{align*}
		Therefore we have 
		\begin{gather}
			\label{eq: P_*(r_2, z) - P_*(r_1, z) bound}
			|P_*(r_1, z) - P_*(r, z)|\le \sup_{s\in[r, r_1]}\left|\int_s^{r_1}a(t)e^{itz}dt\right|\cdot\left(1 + \int_0^{r_1}|a(s)P(s,\ol{z})|\, ds\right).
		\end{gather}
		Let $A(r) = -\int_r^{\infty} a(t)\, dt =\eps_{\alpha}(r)$. We have
		\begin{gather*}
			\int_s^{r_1} a(t) e^{itz}dt = A(t)e^{itz}\Big|_s^{r_1} - iz \int_s^{r_1} A(t) e^{itz}\,dt,
			\\
			\sup_{s\in [r, r_1]}\left|\int_s^{r_1} a(t) e^{itz}dt\right|\le (2 + |z|(r_1 - r))e^{r_1\delta}\sup_{s\ge r}|A(s)|.
		\end{gather*}
		To estimate the second integral in \eqref{eq: P_*(r_2, z) - P_*(r_1, z) bound} we use the Cauchy-Schwarz inequality. It gives
		\begin{gather*}
			\int_0^{r_1}|a(s)P(s,\ol{z})|\, ds \le \|P(s, \ol{z})\|_{L^2([0,r_1])}\|a\|_{L^2(\R_+)}.
		\end{gather*}
		Next, we use formula \eqref{Krein system reflection formula} and and Proposition \ref{Prop: Krein system estimates} to write
		\begin{gather*}
			|P(s,\ol{z})|= |e^{is\ol{z}}P_*(s, z)|\le e^{s(\delta + \|a\|_{L^2(\R_+)})}.
		\end{gather*}
		Therefore $\int_0^{r_1}|a(s)P(s,\ol{z})|\, ds\ls e^{r_1(\delta + \|a\|_{L^2(\R)})}$. If we substitute the obtained bounds into \eqref{eq: P_*(r_2, z) - P_*(r_1, z) bound} and additionally assume $r_1 - r\le 1$ then it will become
		\begin{gather}
			\label{eq: sup A(s)}
			|P_*(r_1, z) - P_*(r, z)|\ls (1 + |z|)\exp\left(2r\delta + r\|a\|_{L^2(\R_+)}\right)\sup_{s\ge r}|A(s)|.
		\end{gather}
		Uniformly in $\mathcal{U}_{\delta}$ for $r_1 \le r_2\le r_1 + 1$ we have $|P_*(r_1, z) - P_*(r, z)|\le (1 + |z|)\ea(r)$ hence $P_*(r,z)$ converges as $r\to\infty$ very rapidly on compact subsets of $\Cm$. This limit coincides with $\Pi_{\gamma}$ in $\Cm_+$ hence $\Pi$ has an entire continuation into the whole complex plane $\Cm$. Now we have to bound the order of $\Pi$ when $\alpha > 1$.  
		Recall \eqref{eq: sup A(s)}. For $z\in \mathcal{U}_\delta$ we have the uniform bound
		\begin{gather*}
			|\Pi(z) - 1|\le \sum_{n\ge 0}|P_*(n + 1, z) - P_*(n,z)|\ls (1 + |z|)\sum_{n\ge 0}e^{n\left(2\delta + \|a\|_{L^2(\R_+)}\right)}B(n),
		\end{gather*}
		where $B(r) = \sup_{s\ge r}|A(s)| = \ea(r)$ and the constant in $\ls$ depends only on $\|a\|_{L^2(\R_+)}$. Inequality $|\Pi(z)|\ls \exp(c\delta^{\alpha^*})\le\exp(c|z|^{\alpha^*})$ in $\mathcal{U}_{\delta}$ with some constant $c$ then follows from Lemma \ref{Lem: order of a sum bound}. To conclude the proof notice that from Proposition \ref{Prop: Krein system estimates} and Lemma \ref{Lem: convergence in Krein theorem} we know that $\Pi$ is bounded in the half-plane $\{\Im z \ge \delta\}$.
	\end{proof}
	The estimate in the previous proposition implies $|\Pi(z)|\ls 1 + |z|$ uniformly in $\mathcal{U}_{\delta}$ for every $\delta > 0$. This inequality can be strengthened in the following way.
	
	\begin{Cor}
		Assume that $a\in \osc_{\alpha}$ for some $\alpha \ge 1$ and let $\delta,\beta> 0$ be positive numbers. Then we have $|\Pi(z)|\ls 1 + |z|^{\beta}$ uniformly in $\Omega_{\delta} = \{z\colon \Im z > -\delta\}$.  
	\end{Cor}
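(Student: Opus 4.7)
The plan is to strengthen the telescoping estimate from the proof of Proposition~\ref{prop: conv of p star and p} by truncating the sum at an integer $N$ that depends on $|z|$. For $z \in \Omega_\delta$ with $\Im z \ge 1$, the uniform boundedness of $\Pi$ in the upper half-plane $\{\Im z \ge 1\}$ (from Proposition~\ref{Prop: Krein system estimates} and Lemma~\ref{Lem: convergence in Krein theorem}) makes the desired estimate trivial, so I would focus on the remaining region, where $z \in \mathcal{U}_{\delta'}$ with, say, $\delta' > \delta + 1$ fixed.

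Iterating the per-step increment bound used in the proof of Proposition~\ref{prop: conv of p star and p} from $n = N$ to infinity gives
\begin{gather*}
|\Pi(z) - P_*(N, z)| \ls (1+|z|) \sum_{n \ge N} e^{n(2\delta' + \|a\|_{L^2(\R_+)})} B(n),
\end{gather*}
where $B(n) = \sup_{s \ge n}|A(s)| = \eps_\alpha(n)$. On the other hand, Proposition~\ref{Prop: Krein system estimates} yields the crude bound $|P_*(N, z)| \le \exp(N(\|a\|_{L^2(\R_+)} + \delta))$ throughout $\Omega_\delta$. The task is then to choose $N = N(|z|, \beta)$ so that both contributions are $\ls (1+|z|)^\beta$.

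For $\alpha > 1$, the decay $B(n) \ls e^{-cn^\alpha}$ yields a tail $\ls e^{-cN^\alpha/2}$ once $N$ is large, so choosing $N \asymp (\log(1+|z|))^{1/\alpha}$ makes $(1+|z|) e^{-cN^\alpha/2} \ls (1+|z|)^\beta$, while $\exp(N(\|a\|_{L^2(\R_+)}+\delta))$ is $o(|z|^\eps)$ for every $\eps > 0$ because $1/\alpha < 1$. For $\alpha = 1$, the stronger decay $B(n) \ls e^{-cn}$ holds for \emph{every} $c > 0$; picking $c$ sufficiently large and $N \asymp \log(1+|z|)$ balances both contributions at $\ls (1+|z|)^\beta$. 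The main technical point is simply the balancing of these two contributions via a well-chosen $N$; the flexibility in the decay rate of $B(n)$, built into the definition of $\eps_\alpha$, is exactly what permits exponents $\beta > 0$ arbitrarily close to zero.
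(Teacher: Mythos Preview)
Your argument is correct and takes a genuinely different route from the paper. The paper does not revisit the telescoping estimate at all; instead it uses the already-proven linear bound $|\Pi(z)|\ls 1+|z|$ on each horizontal strip $\mathcal{U}_\Delta$ (from Proposition~\ref{prop: conv of p star and p}) together with the uniform boundedness of $\Pi$ on $\{\Im z\ge 1\}$, and then applies the Hadamard three-lines theorem to $F(z)=\Pi(z)/(z-z_1)$ on a wide strip $S_\Delta=\{-\Delta\le\Im z\le 1\}$ with $\Delta$ chosen so that $(1+\delta)/(1+\Delta)\le\beta$. Interpolating between the bounded top edge and the linearly-bounded bottom edge gives $|\Pi(z_0)|\ls 1+|z_0|^\beta$ on $S_\delta$.

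Your direct approach is more elementary: it avoids complex interpolation and instead balances the crude a priori bound on $P_*(N,z)$ against the rapidly decaying tail $\sum_{n\ge N}e^{cn}B(n)$, using the freedom in the constant (or exponent) hidden in $B(n)=\eps_\alpha(n)$. The paper's method is shorter once Proposition~\ref{prop: conv of p star and p} is in hand and treats $\alpha=1$ and $\alpha>1$ uniformly, whereas your argument requires separate bookkeeping for the two cases but stays entirely real-variable. One minor cosmetic point: Proposition~\ref{Prop: Krein system estimates} gives $|P_*(N,z)|\ls\exp(N(\|a\|_{L^2}+\delta))$ with a $\ls$ rather than a $\le$; this does not affect your balancing.
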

	\begin{proof}
		From Proposition \ref{Prop: Krein system estimates} and Lemma \ref{Lem: convergence in Krein theorem} we know that $\Pi$ is bounded in $\{\Im z \ge 1\}$ hence we need to show $|\Pi(z)|\ls 1 + |z|^{\beta} $ only for the strip $S_{\delta} = \{z\colon -\delta \le \Im z \le 1\}$. 
		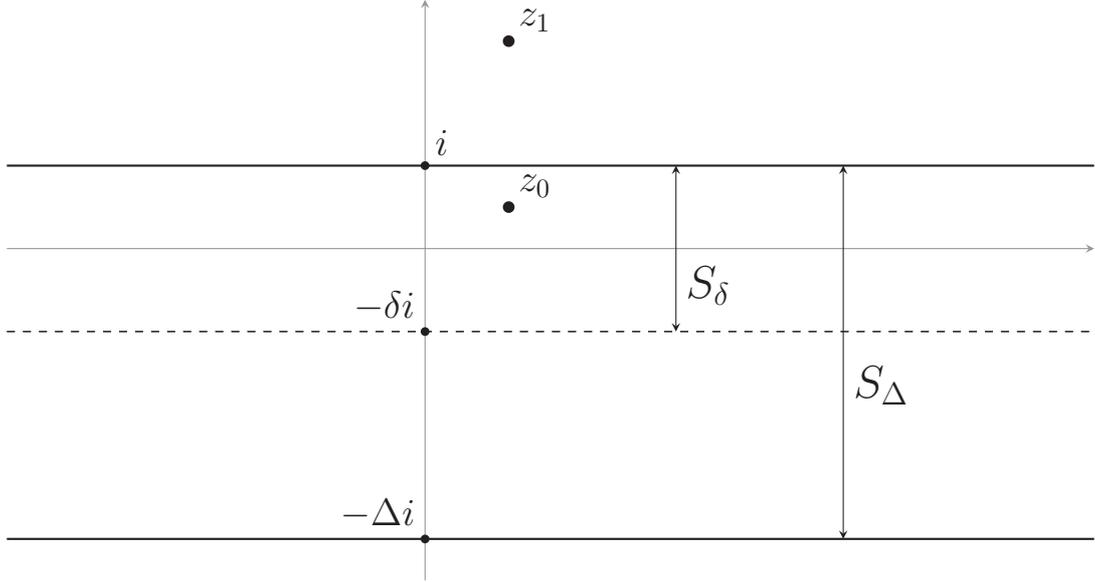
\begin{figure}[ht]
			\centering
			\tikzset{every node/.style={font=\scriptsize}}
			\begin{tikzpicture}[scale=1.1][every node/.append style={midway}]
				
				\coordinate (E) at (2,0.5);
				
				\coordinate (O) at (0,0);
				\coordinate (A) at (-3,-2);
				\coordinate (B) at (5,-2);
				\coordinate (C) at (5,4);
				\coordinate (D) at (-3,4);
				\coordinate (X) at (10,0);
				
				\draw [-stealth, gray](-5,0) -- (8,0);
				\draw [-stealth, gray](0,-4) -- (0,3);
				
				\coordinate (y1) at (0,1);
				\coordinate[label=above right:\Large$i$] (y1) at (y1);
				\fill[black] (y1) circle (1.5pt);
				
				\coordinate (z_0) at (1,0.5);
				\coordinate[label=above right:\Large$z_0$] (z_0) at (z_0);
				\fill[black] (z_0) circle (2pt);
				\draw[black, dashed] (-5, -1) -- (8, -1);
				
				\draw[black, dashed] (-5, -1) -- (8, -1);

				\coordinate (z_1) at (1,2.5);
				\coordinate[label=above right:\Large$z_1$] (z_1) at (z_1);
				\fill[black] (z_1) circle (2pt);

				\draw[black, thick] (-5, -3.5) -- (8, -3.5);
				\draw[black, thick] (-5, 1) -- (8, 1);

				\coordinate (Deltah) at (0, -3.5);
				\coordinate[label=above left:\Large$-\Delta i$] (Deltah) at (Deltah);
				\fill[black] (Deltah) circle (1.5pt);
				
				\coordinate (deltah) at (0, -1);
				\coordinate[label=above left:\Large$-\delta i$] (deltah) at (deltah);
				\fill[black] (deltah) circle (1.5pt);
				
				\coordinate[label=above right:\LARGE $S_\delta$] (3,-0.8) at (3,-0.8);
				
				\coordinate[label=above right:\LARGE $S_\Delta$] (5,-2) at (5,-2);                
				\draw [stealth-stealth](3,1) -- node[right] {} (3,-1);
				\draw [stealth-stealth](5,1) -- node[right] {}  (5,-3.5);
				
			\end{tikzpicture}
			\caption{Strip for the Hadamard three lines theorem.}
			\label{fig:three lines}
		\end{figure}
		
		Take large $\Delta > 0$ such that $\frac{1 + \delta}{1 + \Delta} \le \beta$ and let $S_{\Delta} = \{z\colon -\Delta \le \Im z \le 1\}$ be the strip similar to $S_{\delta}$. we have $\partial S_{\Delta} = L_1 \cup L_2$, where $L_1 = \{\Im z = 1\}$ and $L_2 = \{\Im z = - \Delta\}$.  We want to apply the Hadamard three lines theorem, see page 33 in \cite{Reed1975MethodsOM}: we know that $\Pi$ is bounded on $L_1$ and $|\Pi(z)|\ls 1 + |z|$ uniformly in $S_{\Delta}$. 
		Take $z_0\in S_{\Delta}$ and let $z_1 = \ol{z_0} + 3i$, $ F(z) = \frac{\Pi(z)}{z - z_1}$, see Figure \ref{fig:three lines}.
		We have $|z - z_1|\ge 1$ for $z\in L_1$ hence
		\begin{gather*}
			\sup_{z\in L_1}|F(z)| = \sup_{z\in L_1}\frac{|\Pi(z)|}{|z - z_1|} \le \sup_{z\in L_1}|\Pi(z)|\ls 1.
		\end{gather*}
		Further, if $z\in S_{\Delta}$ then we can write $\frac{|z| + 1}{|z - z_1|}\le 1 + \frac{|z_1| + 1}{|z - z_1|}\ls 1 + |z_1|\ls 1 + |z_0|$ and
		\begin{gather*}
			\sup_{z\in S_{\Delta}}|F(z)| = \sup_{z\in S_{\Delta}}\frac{|\Pi(z)|}{|z - z_1|}\le (1 + |z_0|)\sup_{z\in S_{\Delta}}\frac{|\Pi(z)|}{|z| + 1} \ls 1 + |z_0|
		\end{gather*}
		uniformly for $z_0\in S_{\delta}$.
		Now the Hadamard three lines theorem implies 
		\begin{gather*}
			|F(z_0)| \le \left(\sup_{z\in L_1}|F(z)|\right)^{1 - h}\cdot \left(\sup_{z\in L_2}|F(z)|\right)^{h} \ls 1 + |z_0|^{h},
		\end{gather*}
		where $h = \frac{1 - \Im z_0}{\Delta + 1} \le \frac{1 + \delta}{1 + \Delta} \le \beta$ due to the choice of $\Delta$. This gives
		\begin{gather*}
			|\Pi(z_0)| = (2 + 2|\Im z_0|)|F(z_0)|\ls 1 + |z_0|^{h}\ls 1 + |z_0|^{\beta}
		\end{gather*}
		uniformly for $z_0\in S_{\delta}$. The proof is concluded.
		
	\end{proof}
	
	\subsection{Assertion \ref{cond: main thm 2}. Krein system with entire inverse \Szego function}
	
	\begin{Lem}\label{Prop Szego function has a zero}
		Assume that $a\in L^2(\R_+)$, $\sigma$ is absolutely continuous and $\Pi$ is entire of finite order. Then either $\Pi$ has at least one zero in $\Cm$ or $a\equiv 0$ in $L^2(\R_+)$.
	\end{Lem}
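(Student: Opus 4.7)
The plan is to argue by contradiction: assume $\Pi$ has no zeros in $\Cm$ and $a\not\equiv 0$. Since $\Pi$ is entire of finite order and nowhere zero, Hadamard's factorization theorem gives $\Pi(z) = e^{p(z)}$ for some polynomial $p$. I would then exploit Theorem~\ref{thm: szego to hardy}, which produces $\gamma\in [0, 2\pi)$ such that $\Pi_\gamma^{-1} - 1 =: h$ belongs to $H^2(\Cm_+)$ with $\|h\|_{H^2(\Cm_+)} = \|a\|_{L^2(\R_+)}$; in particular, $a\not\equiv 0$ forces $h\not\equiv 0$.

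The first step is the standard decay estimate $|h(iy)|\ls y^{-1/2}\|h\|_{H^2(\Cm_+)}$ for $h\in H^2(\Cm_+)$, obtained by Cauchy--Schwarz applied to the Cauchy representation. It yields
\[
e^{-i\gamma - p(iy)} = 1 + h(iy) \to 1, \qquad y\to +\infty.
\]
Writing $p(iy) = \alpha(y) + i\beta(y)$ with $\alpha,\beta$ real polynomials in $y$, the modulus forces $\alpha(y)\to 0$; since a non-constant real polynomial diverges to $\pm\infty$, $\alpha$ must be identically $0$. A parallel argument applied to $\beta(y)+\gamma$, which must converge modulo $2\pi$, forces $\beta$ to be constant as well. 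Hence $p(iy)$ is constant in $y$, and since a polynomial on $i\R$ determines it globally, $p$ itself is a constant polynomial. Consequently $\Pi$ is a constant function on $\Cm$.

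With $\Pi$ constant, $h = \Pi_\gamma^{-1} - 1$ is also a constant function on $\Cm_+$; but the only constant lying in $H^2(\Cm_+)$ is $0$, which contradicts $h\not\equiv 0$. This completes the argument.

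I expect no genuine obstacle beyond routine bookkeeping: the two ingredients are Hadamard's factorization, which reduces the problem to $\Pi=e^p$, and the vanishing of $H^2(\Cm_+)$-functions along the imaginary axis, which propagates $\Pi_\gamma^{-1}\to 1$ to a rigidity statement on $p$. The most delicate step is the polynomial-growth argument that forces both $\Re p(iy)$ and $\Im p(iy)$ to be constant in $y$; everything else is bookkeeping.
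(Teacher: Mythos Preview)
Your argument is correct and follows essentially the same route as the paper: Hadamard factorization gives $\Pi=e^{p}$ with $p$ a polynomial, and the $H^2$-decay of $h=\Pi_\gamma^{-1}-1$ along the imaginary axis forces $p$ to be constant. The only cosmetic difference is the final step: the paper concludes from $\Pi_\gamma\equiv 1$ that $\sigma$ is Lebesgue and hence $a\equiv 0$, whereas you observe directly that $h$ is a constant in $H^2(\Cm_+)$ and therefore zero, giving $\|a\|_{L^2(\R_+)}=\|h\|_{H^2(\Cm_+)}=0$.
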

	\begin{proof}
		Assume that $\Pi$ does not have any zeroes in $\Cm$. Then $\Pi(z) = e^{g(z)}$, where $g$ is a polynomial. Let $\gamma$ be as in \eqref{eq: tmp convergence in Krein theorem}. From Theorem \ref{thm: szego to hardy} we know that $e^{g(z) + i\gamma} = \Pi_\gamma(z)\to 1$ as $\Im z\to\infty$. It is possible only when $g(z) = -i\gamma$  and $\Pi_\gamma = 1$ are constants in $\Cm_+$.  In this case $\sigma$ coincides with the Lebesgue measure and therefore $a\equiv 0$ in $L^2(\R_+)$.
	\end{proof}
	
	The idea of the proof of the following proposition is similar to the idea used in Theorem \ref{thm: MNT with order}, it was previously implemented in Lemma 4.2 from \cite{Gub2024} in a slightly different situation with more technical details.
	\begin{Thm}\label{thm: from szego function}
		Assume that $a\in L^2(\R_+)$, $\sigma$ is a.\,c., $\Pi$ is entire with $\Pi(\ol{z_0}) = 0$ for some $z_0\in \Cm_+$ and $\frac{\Pi}{z - \ol{z_0}}\in \dfa$. Then we have $P(r, z_0) = \eps_{\alpha}(r)$.
	\end{Thm}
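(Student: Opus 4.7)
The plan is to deduce the pointwise bound $P(r, z_0) = \eps_\alpha(r)$ in two stages: first establishing the integral decay $\int_r^\infty |P(s, z_0)|^2\,ds = \eps_\alpha(r)$, and then converting it into the pointwise estimate through a Christoffel--Darboux energy identity. The heart of the argument will be in Step 1.

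For Step 1, I would follow the strategy of Theorem \ref{thm: MNT with order}. The extremal characterization \eqref{definition of the minimization function for Krein system}--\eqref{m_r function formula} gives
\[
\int_0^r |P(s, z_0)|^2\,ds \ge \frac{2\pi|g(z_0)|^2}{\|g\|^2_{L^2(\R, \sigma)}}
\]
for every $g \in PW_{[0, r]}$ with $g(z_0) \neq 0$. The natural trial function is the Fourier truncation $f_r = \F^{-1}(\mathbf{1}_{[0, r]}\F f) \in PW_{[0, r]}$ of $f = \Pi/(z - \ol{z_0}) \in \dfa$. Using $d\sigma = |\Pi|^{-2}\,dx$ and $f/\Pi = 1/(z - \ol{z_0})$, one computes $\|f\|^2_{L^2(\R, \sigma)} = \pi/\Im z_0$ and $|f(z_0)|^2 = |\Pi(z_0)|^2/(2\Im z_0)^2$, so the ``limiting'' ratio $2\pi|f(z_0)|^2/\|f\|^2_{L^2(\R, \sigma)} = |\Pi(z_0)|^2/(2\Im z_0)$ coincides with $\int_0^\infty |P(s, z_0)|^2\,ds$ via \eqref{expression for the szego function}. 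Setting $g_r := f - f_r$, the remainder is bounded by
\[
\int_r^\infty |P(s, z_0)|^2\,ds \le \frac{|\Pi(z_0)|^2}{2\Im z_0} - \frac{2\pi|f_r(z_0)|^2}{\|f_r\|^2_{L^2(\R, \sigma)}},
\]
and the task reduces to showing that both $|g_r(z_0)|$ and $\|f_r\|^2_{L^2(\R, \sigma)} - \|f\|^2_{L^2(\R, \sigma)}$ are $\eps_\alpha(r)$. The first is a Cauchy--Schwarz bound using $\int_r^\infty |\F f|^2 = \eps_\alpha(r)$ and the factor $e^{-r\Im z_0}$. For the second I would expand the inner product in $L^2(\R, \sigma)$; the cross term $\langle f, g_r\rangle_{L^2(\R, \sigma)} = \int_\R g_r(x)/[(x - z_0)\Pi(x)]\,dx$ equals $2\pi i g_r(z_0)/\Pi(z_0) = \eps_\alpha(r)$ by contour shift into $\Cm_+$ (noting $g_r \in H^2(\Cm_+)$ decays at infinity and $\Pi$ is outer in $\Cm_+$). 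The quadratic term $\|g_r\|^2_{L^2(\R, \sigma)}$ is the main obstacle, which I would handle via the factorization $\Pi_\gamma^{-1} = 1 + h$ with $h \in H^2(\Cm_+)$ (Theorem \ref{thm: szego to hardy}) together with the exponential decay $\|g_r(\cdot + iy)\|_{L^2(\R)} \le e^{-ry}\eps_\alpha^{1/2}(r)$ of upward horizontal shifts of $g_r$, following the lines of Lemma 4.2 in \cite{Gub2024}.

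Once $\int_r^\infty |P(s, z_0)|^2\,ds = \eps_\alpha(r)$ is in hand, Step 2 becomes short. Integrating $\partial_r P_* = -aP$ from $r$ to $\infty$ (with $P_*(r, z_0) \to \Pi(z_0)$ by Lemma \ref{Lem: convergence in Krein theorem}) and applying Cauchy--Schwarz gives $|P_*(r, z_0) - \Pi(z_0)| \le \|a\|_{L^2(\R_+)}\bigl(\int_r^\infty|P|^2\,ds\bigr)^{1/2} = \eps_\alpha(r)$. Rewriting \eqref{second CD formula} using $\int_0^\infty |P|^2 = |\Pi(z_0)|^2/(2\Im z_0)$ produces
\[
|P(r, z_0)|^2 = \bigl(|P_*(r, z_0)|^2 - |\Pi(z_0)|^2\bigr) + 2\Im z_0 \int_r^\infty |P(s, z_0)|^2\,ds.
\]
Both summands on the right are $\eps_\alpha(r)$ (using the uniform bound on $|P_*(r, z_0)|$ from Proposition \ref{Prop: Krein system estimates}), hence $|P(r, z_0)|^2 = \eps_\alpha(r)$, so $|P(r, z_0)| = \eps_\alpha(r)$, as desired.
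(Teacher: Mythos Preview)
Your proposal is correct and follows essentially the same route as the paper: plug the Fourier truncation $f_r$ of $\Pi/(z-\ol{z_0})$ into the extremal characterization \eqref{definition of the minimization function for Krein system}--\eqref{m_r function formula}, show the resulting ratio equals $|\Pi(z_0)|^2/(2\Im z_0)+\eps_\alpha(r)$, and deduce $\int_r^\infty|P(s,z_0)|^2\,ds=\eps_\alpha(r)$. Two tactical points are worth noting. First, for the quadratic term $\|g_r\|_{L^2(\R,\sigma)}^2=\|g_r/\Pi\|_{L^2(\R)}^2$ the paper avoids any shifting argument: it simply observes $\|g_r\|_{L^\infty(\R)}\le\|\phi\|_{L^1[r,\infty)}=\eps_\alpha(r)$ and splits $\|g_r(1+h)\|_{L^2}^2\ls\|g_r\|_{L^2}^2+\|g_r\|_{L^\infty}^2\|h\|_{L^2}^2$, which is shorter than what you sketch; the same $L^\infty$ bound also lets the paper handle the cross term by a bare Cauchy--Schwarz rather than a contour shift. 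Second, for the pointwise conclusion the paper writes $|P(r,z_0)|^2=-2\int_r^\infty\Re(P\ol{P'})$ and uses $P'(\cdot,z_0)\in L^2(\R_+)$ (from $P_*$ bounded and $a\in L^2$), whereas your Christoffel--Darboux rearrangement via $|P_*(r,z_0)-\Pi_\gamma(z_0)|=\eps_\alpha(r)$ is an equally clean alternative.
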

	\begin{proof}
		Let $\phi = \F\left(\frac{\Pi}{z - \ol{z_0}}\right)$ and  $G, G_r$ be defined as
		\begin{gather*}
			G(z) =\frac{\Pi(z)}{z-\ol{z_0}} = \frac{1}{\sqrt{2\pi}}\int_0^{\infty}\phi(t)e^{itz }\,dt ,\qquad G_{r}(z) = \frac{1}{\sqrt{2\pi}}\int_0^{r}\phi(t)e^{itz}\,dt,\quad z\in\Cm.
		\end{gather*}
		Recall the definition \eqref{definition of the minimization function for Krein system} of $\bm_r$. We have $G_r\in PW_{[0,r]}$ and $d\sigma(t) = |\Pi(t)|^{-2}dt$ hence
		\begin{gather}
			\label{bound for m_r in terms of G}
			\bm_r(\sigma,z_0)\le \frac{1}{2\pi}\norm[G_r/G_r(z_0)]_{L^2(\R,\sigma)}= \frac{1}{2\pi|G_{r}(z_0)|^2}\int_{-\infty}^{\infty}\frac{|G_{r}(t)|^2}{|\Pi(t)|^2}\, dt.
		\end{gather}
		Let us examine the right-hand side of the last inequality. For $z\in\Cm$, we have
		\begin{gather*}
			G(z) - G_r(z) = \frac{1}{\sqrt{2\pi}}\int_{r}^{\infty}\phi(t)e^{itz }\,dt.
		\end{gather*}
		Consequently $\|G - G_r\|_{L^2(\R)} =  \|\phi\|_{L^2[r, +\infty)} = \eps_{\alpha}(r)$ and for $z\in \R$ we can write
		\begin{align*}
			\left|G(z) - G_r(z)\right| &\ls \int_{r}^{\infty}\left|\phi(t)e^{itz }\right|\,dt = \|\phi\|_{L^1[r, +\infty)} = \eps_{\alpha}(r).
		\end{align*}
		Therefore 
		\begin{gather}
			\label{pointwise bound on difference G}
			\frac{1}{|G_{r}(z_0)|^2} - \frac{4(\Im z_0)^2}{|\Pi(z_0)|^2} = \frac{1}{|G_{r}(z_0)|^2}- \frac{1}{|G(z_0)|^2} = \eps_{\alpha}(r).
		\end{gather}
		Hence the first multiplier in \eqref{bound for m_r in terms of G} converges very rapidly, 
		for the integral in \eqref{bound for m_r in terms of G} we have
		\begin{align*}
			\int_{-\infty}^{\infty}\frac{|G_{r}(t)|^2}{|\Pi(t)|^2}\, dt &= \int_{-\infty}^{\infty}\left|\frac{G(t)}{\Pi(t)} + \frac{G_{r}(t) - G(t)}{\Pi(t)}\right|^2\, dt
			= \int_{-\infty}^{\infty}\left|\frac{1}{t - \ol{z_0}} + \frac{G_{r}(t) - G(t)}{\Pi(t)}\right|^2\, dt 
			\\
			&= \int_{-\infty}^{\infty}\left(\frac{1}{|t - \ol{z_0}|^2} + 2\Re\left(\frac{1}{t - z_0}\cdot\frac{G_{r}(t) - G(t)}{\Pi(t)} \right) + \left|\frac{G_{r}(t) - G(t)}{\Pi(t)}\right|^2\right)\, dt. 
		\end{align*}
		We have $\|G - G_r\|_{L^2(\R)}  = \eps_{\alpha}(r)$ and
		\begin{gather*}
			\norm[\frac{1}{(t - z_0)\Pi(t)}]_{L^2(\R)} \ls \sqrt{\int_{\R}\frac{d\sigma(t)}{1 + t^2}}< \infty
		\end{gather*}
		therefore 
		\begin{gather*}
			\left|\int_{-\infty}^{\infty}\frac{1}{t - z_0}\cdot\frac{G_{r}(t) - G(t)}{\Pi(t)} \, dt\right|\ls \norm[\frac{1}{(t - z_0)\Pi(t)}]_{L^2(\R)}\cdot\norm[G - G_r]_{L^2(\R)} = \eps_{\alpha}(r).
		\end{gather*}
		Furthermore, Theorem \ref{thm: szego to hardy} states that $\Pi^{-1}\gamma^{-1} = \Pi_{\gamma}^{-1} =1 + h$ with $h\in H^2(\Cm_+)$, hence
		\begin{gather*}
			\norm[\frac{G_{r}(t) - G(t)}{\Pi}]^2_{L^2(\R)}\ls \norm[G - G_r]^2_{L^2(\R)} + \norm[G - G_r]^2_{L^\infty(\R)} \cdot \|h\|^2_{H^2(\Cm_+)} = \eps_{\alpha}(r).
		\end{gather*}
		It follows that
		\begin{gather}
			\label{eq: entire function integral bound}
			\int_{-\infty}^{\infty}\frac{|G_{r}(t)|^2}{|\Pi(t)|^2}\, dt = \int_{-\infty}^{\infty}\frac{dt}{|t - \ol{z_0}|^2} + \eps_{\alpha}(r) = \frac{\pi}{\Im z_0} + \eps_{\alpha}(r).
		\end{gather}
		Substituting \eqref{pointwise bound on difference G} and \eqref{eq: entire function integral bound} into \eqref{bound for m_r in terms of G} we get
		\begin{align*}
			\bm_r(z_0) 
			=  \left(\frac{4(\Im z_0)^2}{|\Pi(z_0)|^2} + \eps_{\alpha}(r) \right)\left(\frac{1}{2\Im z_0} +\eps_{\alpha}(r) \right) = \frac{2\Im z_0}{|\Pi(z_0)|^2} + \eps_{\alpha}(r).
		\end{align*}
		Now \eqref{expression for the szego function} and \eqref{m_r function formula} imply
		\begin{align*}
			\bm_r(z_0) - \frac{2\Im z_0}{|\Pi(z_0)|^2} = \left(\int_0^{r}|P(t, z_0)|^2\,dt\right)^{-1} - \left(\int_0^{\infty}|P(t, z_0)|^2\,dt\right)^{-1}
			\gs\int_r^{\infty}|P(t, z_0)|^2\,dt.
		\end{align*}
		Thus $\|P(t, z_0)\|_{L^2[r, \infty)} = \eps_{\alpha}(r)$. Recall the differential equation for $P(r, z_0)$ from Krein system \eqref{Krein system}: $P'(r,z_0) = iz_0 P(r,z_0) - \ol{a(r)}P_*(r,z_0)$.
		From Proposition \ref{Prop: Krein system estimates} we know that $P_*(r, z_0)$ is bounded in $r$ hence $P'(r,z_0)\in L^2(\R_+)$ and therefore
		\begin{align*}
			|P(r, z_0)|^2 &= \left|2\int_r^{\infty}\Re\left(P(t, z_0)P'(t,z_0)\right)\, dt\right| 
			\\
			&\le 2\|P(t, z_0)\|_{L^2[r, +\infty)}\left\|P'(t,z_0)\right\|_{L^2[r, +\infty)} = \eps_{\alpha}(r).
		\end{align*}
		This concludes the proof.
	\end{proof}
	\subsection{Proof of Theorem \ref{thm: main equivalences of convergences}}
	
	\begin{proof}[Proof of Theorem \ref{thm: main equivalences of convergences}]
		Implications $\ref{cond: main thm 3}\Longrightarrow \ref{cond: main thm 1}$ and $\ref{cond: main thm 3}\Longrightarrow \ref{cond: main thm 2}$ are proved in Proposition \ref{prop: from small p} for $\alpha \ge 1$. From Theorems \ref{thm: entropy and variation} and \ref{thm: osc var} we know that \ref{cond: main thm 1} is equivalent to $E_a(r) = \ea(r)$. Hence the equivalence \ref{cond: main thm 1}$\Longleftrightarrow$\ref{cond: main thm C} follows from Theorem \ref{thm: bessonov, denisov, entropy}. Also, when $\alpha = 1$, \ref{cond: main thm 1}$\Longleftrightarrow$\ref{cond: main thm 2} immediately follows from Theorem \ref{thm: reformulation master}. Thus, the theorem is proved for $\alpha = 1$ and for $\alpha > 1$ we need to show $\ref{cond: main thm 1}\Longrightarrow \ref{cond: main thm 3}$ and $\ref{cond: main thm 2}\Longrightarrow \ref{cond: main thm 3}$.

		If \ref{cond: main thm 1} holds then Proposition \ref{prop: conv of p star and p} applies and $\Pi$ is entire of finite order. Next, by Lemma \ref{Prop Szego function has a zero}, $\Pi(\ol{z_0}) = 0$ for some $z_0\in \Cm_+$ and again by Proposition \ref{prop: conv of p star and p}
		\begin{gather*}
			|P_*(r, \ol{z_0})| = |P_*(r, \ol{z_0}) - \Pi(\ol{z_0})| = \ea(r).
		\end{gather*}
		Hence \eqref{Krein system reflection formula} gives $P(r, z_0) = e^{i z_0 r}\ol{P_*(r,\ol{ z_0})} = \eps_{\alpha}(r)$, which is exactly \ref{cond: main thm 3}.
		
		Assume that $\Pi$ satisfies assertion \ref{cond: main thm 2}. Then, by Lemma \ref{Lem: salpha description}, $\Pi$ is entire of finite order and, by Lemma \ref{Prop Szego function has a zero}, it has some zero $\ol{z_0}$. Proposition \ref{prop: salpha does not depend on z_0} gives $\frac{\Pi(z)}{z - \ol{z_0}}\in\dfa$ and from Theorem \ref{thm: from szego function} we get $P(r, z_0) = \eps_{\alpha}(r)$. This finishes the implication  $\ref{cond: main thm 2}\Longrightarrow\ref{cond: main thm 3}$ and the proof of the whole theorem for $\alpha > 1$.\medskip
		
		
	\end{proof}
	
	\section{Entropy estimations. Proofs of Theorems \ref{thm: entropy and variation} and \ref{thm: osc var}}\label{section: second theorem}
	\subsection{Oscillation and variation. Proof of Theorem \ref{thm: osc var}}
	For a function $F\in L^2([0,1])$ we let $C_F = \int_0^1 F(s)\, ds$ and
	\begin{gather*}
		\var(F) = \int_0^1 F(s)^2\,ds - C_F^2 = \int_0^1(F(s) - C_F)^2\,ds
	\end{gather*}
	be the mean value and the variation of $F$. Below we will work with the absolutely continuous functions on $[0,1]$ satisfying the assertions
	\begin{gather}\label{eq: epsilon delta assertion on the antiderivative}
		F(0) = 0, \qquad \var(F)\le \eps, \qquad \|F'\|_{L^2([0,1])} \le \delta,
	\end{gather}
	where $\eps$ and $\delta$ are small positive numbers. Let us prove the following technical lemma.
	\begin{Lem}\label{Lemma 1}
		Assume that $F$ satisfies the assertions in \eqref{eq: epsilon delta assertion on the antiderivative} and
		let 
		\begin{gather*}
			\gamma = \gamma(\eps, \delta) = \eps^{1/2} + \eps^{1/4}\delta^{1/2}.
		\end{gather*}
		Then we have the estimates
		\begin{gather*}
			|C_F|\le 2\gamma,
			\qquad
			\sup_{t\in [0,1]}|F(t)|\le 4\gamma,
			\qquad
			\|F^2 - C_F^2\|_{L^2([0,1])} \le 6\eps^{1/2}\gamma.
		\end{gather*}
	\end{Lem}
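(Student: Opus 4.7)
The plan is to exploit the three pieces of data on $F$ (boundary value $F(0)=0$, smallness of variance, smallness of derivative in $L^2$) through a scale-optimisation argument, then derive the pointwise and $L^2$ bounds from the control on $C_F$.

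For the first estimate $|C_F|\le 2\gamma$, I would start from the identity
\[
T\cdot C_F = \int_0^T \bigl(C_F-F(t)\bigr)\,dt + \int_0^T F(t)\,dt = \int_0^T \bigl(C_F-F(t)\bigr)\,dt + \int_0^T\!\!\int_0^t F'(s)\,ds\,dt,
\]
valid for any $T\in(0,1]$ because $F(0)=0$. Cauchy--Schwarz applied to the two terms yields
$|C_F|\le \eps^{1/2}/\sqrt{T} + \tfrac{2}{3}\sqrt{T}\,\delta$.
Choosing $T=1$ when $\eps^{1/2}\ge\delta$ and $T=\eps^{1/2}/\delta$ otherwise, both cases give $|C_F|\le\tfrac{5}{3}\gamma<2\gamma$. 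This optimisation step is the one I expect to be the main obstacle, since it is the only place where the three hypotheses on $F$ are genuinely coupled; the rest of the lemma is a consequence of it.

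For the pointwise bound, given $t\in[0,1]$, I would pick an interval $[a,b]\subset[0,1]$ of length $L$ containing $t$ (the length $L$ again chosen by the same optimisation as above). Averaging gives
\[
\Bigl|\tfrac{1}{L}\int_a^b F(s)\,ds - C_F\Bigr|\le \tfrac{\eps^{1/2}}{\sqrt{L}},
\]
so by the mean value theorem there is $s_0\in[a,b]$ with $|F(s_0)-C_F|\le \eps^{1/2}/\sqrt{L}$. The estimate $|F(t)-F(s_0)|\le\sqrt{L}\,\delta$ from Cauchy--Schwarz applied to $F'$ then gives
\[
\sup_{t\in[0,1]}|F(t)-C_F|\le \tfrac{\eps^{1/2}}{\sqrt{L}} + \sqrt{L}\,\delta\le 2\gamma,
\]
after the same case analysis. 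Combining with part one, $\sup_{t}|F(t)|\le|C_F|+\sup_t|F(t)-C_F|\le 4\gamma$.

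Finally, for the third estimate, I factor $F^2-C_F^2=(F-C_F)(F+C_F)$ and apply H\"older's inequality:
\[
\|F^2-C_F^2\|_{L^2([0,1])}\le \|F-C_F\|_{L^2([0,1])}\bigl(\|F\|_{L^\infty([0,1])}+|C_F|\bigr)\le \eps^{1/2}(4\gamma+2\gamma)=6\eps^{1/2}\gamma,
\]
which finishes the lemma.
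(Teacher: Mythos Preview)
Your argument is correct and reaches the stated constants, but it proceeds differently from the paper for the first two estimates. The paper avoids any scale parameter: it applies Cauchy--Schwarz once to the pair $(F-C_F,\,F')$ on $[0,r]$, and because $\int_0^r (F-C_F)F' = \tfrac{1}{2}F(r)^2 - C_F F(r)$ (using $F(0)=0$), this gives the pointwise quadratic bound $|F(r)^2 - 2C_F F(r)|\le 2\eps^{1/2}\delta\le 2\gamma^2$ for every $r$. The identity $\var(F)=C_F^2+\int_0^1\bigl[F^2-2C_FF\bigr]$ then yields $C_F^2\le 2\gamma^2$, and completing the square gives $(F(r)-C_F)^2 = C_F^2 + [F(r)^2 - 2C_F F(r)]\le 4\gamma^2$. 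Your route is the standard localisation argument, balancing a variance term $\eps^{1/2}/\sqrt{T}$ against a derivative term $\sqrt{T}\,\delta$ via the choice of $T$; it is slightly more mechanical but does not rely on the algebraic coincidence that $\int(F-C_F)F'$ has a closed form, so it would adapt more readily to variants of the hypotheses. The third estimate is handled identically in both proofs.
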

	\begin{proof}
		The Cauchy-Schwarz inequality gives
		\begin{gather*}
			\int_0^r\left(F(s) - C_F\right)^2ds\int_0^r F'(s)^2\,ds \ge\left(\int_0^r(F(s) - C_F)F'(s)\,ds\right)^2 
			= \left(\frac{F(r)^2}{2} - C_F F(r)\right)^2.
		\end{gather*}
		Hence we have
		\begin{gather}\label{eq: ineq from the C-S}
			|F(r)^2 - 2C_F F(r)| \le 2\sqrt{\eps}\delta\le 2\gamma^2.
		\end{gather}
		Furthermore we can write
		\begin{align*}
			\eps &\ge \var(F) = \int_0^1F(s)^2 \,ds - C_F^2 
			=C_F^2 + \int_0^1[F(s)^2 - 2C_FF(s) ]\,ds.
		\end{align*}
		Rearranging the terms and applying \eqref{eq: ineq from the C-S} we get 
		\begin{gather*}
			C_F^2 \le \eps + \int_0^1|F(s)^2 - 2C_FF(s)| \,ds\le \eps + 2\sqrt{\eps}\delta\le 2\gamma^2.
		\end{gather*}
		The bound $|C_F|\le 2\gamma$ follows. Furthermore, for every $r\in[0,1]$ we have 
		\begin{gather*}
			(F(r) - C_F)^2 = C_F^2 + [F(r)^2 - 2C_F F(r)] \le 4\gamma^2.
		\end{gather*}
		Therefore $|F(r)|\le |C_F| + 2\gamma \le 4\gamma$. 
		Now, the inequality
		\begin{gather*}
			\|F^2 - C_F^2\|_{L^2([0,1])}\le \|F - C_F\|_{L^2([0,1])}(\|F\|_{L^{\infty}([0,1])} + |C_F|)\le \sqrt{\eps}\cdot 6\gamma
		\end{gather*}
		finishes the proof.
	\end{proof}
	\begin{proof}[Proof of Theorem \ref{thm: osc var}]
		Recall \eqref{eq: def variation of a} that we have $g_{a,r}(t) = \int_r^ta(s)\,ds$ and
		\begin{gather*}
			D_a(r) = 2\int_r^{r + 2}|g_{a,r}(t)|^2\,dt - \left|\int_r^{r + 2}g_{a,r}(t)\,dt\right|^2.
		\end{gather*} 
		We need to prove that $a\in \osc_\alpha$ if and only if $D_a(r) = \ea(r)$. If $a\in \osc_\alpha$ then $\sup_{t\ge r}g_{a,r}(t) = \ea(r)$ and consequently $D_a(r) = \ea(r)$, which finishes the ``only if'' part. 
		
		Assume that $D_a(r)=\ea(r)$. For $r\ge 0$ consider the functions $q_r(t) = \Re g_{a,r}(r + 2t)$ and $p_r(t) = \Im g_{a,r}(r + 2t)$ on $[0,1]$. We have $D_a(r) = 4\var(p_r) + 4\var(q_r)$. Hence $\var(p_r) = \ea(r)$ and $\var(q_r) = \ea(r)$. In particular, $\var(p_r), \var(q_r) \to 0$ as $r\to\infty$. 
		Also we have $q_r'(t) = 2\Re a(r + 2t)$ and $p_r'(t) = 2\Im a(r + 2t)$, therefore $\|p_r'\|_{L^2[0,1]}, \|q_r'\|_{L^2[0,1]}\to 0$ as $r\to \infty$. Lemma \ref{Lemma 1} then applies for $p_r$ and $q_r$. It gives 
		\begin{gather*}
			\sup_{s\in [r, r+ 2]}|g_{a,r}(s)| \le \sup_{t\in [0,1]}|p_r(t)| + \sup_{t\in [0,1]}|q_r(t)|\ls \var(p_r)^{1/4} + \var(q_r)^{1/4} = \ea(r).
		\end{gather*}
		The assertion $a\in \osc_\alpha$ follows.
	\end{proof}
	\subsection{Ordered exponential. Reformulation of Theorem \ref{thm: entropy and variation}}
	Recall definition \eqref{eq: entropy def} of the entropy function $E_a$. The matrix $N_a$ is a solution of $N_a'(t) = JQ(t)N_a(t)$ satisfying $N_a(0) = \I$, where $\I$ is the $2\times 2$ identity matrix. Let us study this differential equation in more general form.
	\subsubsection{Ordered exponential}
	Let $A$ be a $2\times 2$ matrix-valued function on $[0,1]$ with entries from $L^1[0,1]$. Define $X_A$ as the solution of
	\begin{gather*}
		X_A'(t) = A(t) X_A(t), \qquad X(0) = \I.
	\end{gather*}
	The matrix $X_A$ is called the ordered exponential of $A$. It admits the following series representation:
	\begin{gather}
		\label{explicit representaion ordered exponent}
		X_A(t) = \I + \sum_{m = 1}^{\infty}\int_0^t A(t_1) \int_0^{t_1} A(t_2)\int_0^{t_2}\ldots \int_0^{t_{m - 1}} A(t_m)\, dt_m\ldots dt_3\,dt_2\, dt_1.
	\end{gather}
	Define the function $F_A$ on $\R$ and its Taylor coefficients $\{a_n\}_{n\ge 0}$ by
	\begin{gather}\label{eq: def F_A}
		F_A(s) = \det\left(\int_0^1X_{sA}(t)X_{sA}^{T}(t)\,dt\right) = \sum_{n\ge 0} a_ns^n.
	\end{gather}
	Assume that $A$ is of the form $A = \left(\begin{smallmatrix}-q & p \\ p & q\end{smallmatrix}\right)$, where $p$ and $q$ are two functions from $L^1([0,1])$. 
	Let $J = \left(\begin{smallmatrix} 0 & 1 \\ -1 & 0\end{smallmatrix}\right)$, we have $J^{-1} = -\left(\begin{smallmatrix} 0 & 1 \\ -1 & 0\end{smallmatrix}\right)$ and $ JAJ^{-1} = -A$.
	Then 
	\begin{gather*}
		(JX_A(t)J^{-1})' = J A(t)X_A(t)J^{-1} =(JA(t)J^{-1})(JX_A(t)J^{-1}) = -A(t)JX_A(t)J^{-1},
	\end{gather*}
	hence $X_{-A}(t) = JX_A(t)J^{-1}$  for every $t$. From formula \eqref{eq: def F_A} we see $F_A(s) = F_{-A}(s)$ for every $s\in \R$. We also have $F_{-A}(s) = F_A(-s)$ hence $F$ is even and 
	$a_{n}= 0$ when $n$ is odd. Recall that for a function $f\in L^2([0,1])$ we use the notation 
	\begin{gather*}
		\var(f) = \int_0^1f(s)^2\,ds - \left(\int_0^1f(s)\,ds\right)^2.
	\end{gather*}
	\begin{Lem}\label{lemma: first coefficient general situation}
		We have $a_2 = 4 \var(g_p) + 4\var(g_q)$, where $g_p(t) = \int_0^tp(x)\,dx$, $g_q(t) = \int_0^t q(x)\,dx$.
	\end{Lem}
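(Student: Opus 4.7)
The plan is to expand everything in powers of $s$ to second order and read off $a_2$. Write $B(t) = \int_0^t A(x)\,dx$ and, from the Picard series \eqref{explicit representaion ordered exponent},
\begin{gather*}
X_{sA}(t) = \I + sB(t) + s^2 C(t) + O(s^3), \qquad C(t) = \int_0^t A(t_1)B(t_1)\,dt_1.
\end{gather*}
Since $A$ is symmetric, so is $B$, and therefore
\begin{gather*}
X_{sA}(t)X_{sA}^T(t) = \I + 2sB(t) + s^2\bigl(B(t)^2 + C(t) + C(t)^T\bigr) + O(s^3).
\end{gather*}
Integrating over $[0,1]$ and setting $\bar B = \int_0^1 B(t)\,dt$, $\bar D = \int_0^1\bigl(B^2 + C + C^T\bigr)\,dt$, the matrix whose determinant we need takes the form $\I + 2s\bar B + s^2\bar D + O(s^3)$.

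Next I would apply the standard expansion $\det(\I + X) = \exp\tr\log(\I + X)$ to $X = 2s\bar B + s^2\bar D + O(s^3)$, giving
\begin{gather*}
F_A(s) = 1 + 2s\tr(\bar B) + s^2\bigl(\tr(\bar D) - 2\tr(\bar B^2) + 2\tr(\bar B)^2\bigr) + O(s^3).
\end{gather*}
Since $\tr A(t) \equiv 0$ for the matrices in question, we have $\tr B(t)\equiv 0$ and hence $\tr(\bar B)=0$ (this is also the $a_1=0$ forced by the parity of $F_A$ noted before the lemma). Therefore
\begin{gather*}
a_2 = \tr(\bar D) - 2\tr(\bar B^2).
\end{gather*}

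The remaining task is a direct computation using $A=\smatrix{-q}{p}{p}{q}$ and $B(t)=\smatrix{-g_q(t)}{g_p(t)}{g_p(t)}{g_q(t)}$. One checks that $B(t)^2 = (g_p^2 + g_q^2)(t)\,\I$, so $\tr(B^2(t)) = 2(g_p^2+g_q^2)(t)$, and that
\begin{gather*}
\tr\bigl(A(t)B(t)\bigr) = 2(p g_p + q g_q)(t) = \bigl(g_p^2 + g_q^2\bigr)'(t),
\end{gather*}
so $\tr C(t) = g_p(t)^2 + g_q(t)^2$ (using $g_p(0)=g_q(0)=0$). Adding these contributions and integrating yields $\tr(\bar D) = 4\int_0^1(g_p^2+g_q^2)\,dt$. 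On the other hand $\bar B^2 = (G_p^2+G_q^2)\,\I$, where $G_p = \int_0^1 g_p$ and $G_q = \int_0^1 g_q$, so $\tr(\bar B^2) = 2(G_p^2+G_q^2)$. Combining gives
\begin{gather*}
a_2 = 4\int_0^1(g_p^2+g_q^2)\,dt - 4(G_p^2+G_q^2) = 4\var(g_p) + 4\var(g_q),
\end{gather*}
as claimed. There is no genuine obstacle here: the only mildly delicate step is carefully tracking the $s^2$ terms both in the ordered exponential and in the determinant expansion; the algebraic simplification of the traces is forced by the symmetric, trace-free structure of $A$ and the fact that $p g_p + q g_q = \tfrac12(g_p^2+g_q^2)'$.
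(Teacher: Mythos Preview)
Your proof is correct and follows essentially the same route as the paper: expand $X_{sA}X_{sA}^T$ to second order in $s$, integrate, and extract the $s^2$ coefficient of the determinant. The only cosmetic difference is that you organize the determinant computation via the identity $\det(\I+X)=\exp\tr\log(\I+X)$ and then compute traces, whereas the paper simply writes out the integrated $2\times2$ matrix explicitly and takes its determinant directly; both lead to the same formula $a_2=4\int_0^1(g_p^2+g_q^2)\,dt-4(G_p^2+G_q^2)$.
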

	\begin{proof}
		The proof is a calculation. We have
		\begin{align*}
			X_{sA}(t) &= \I + s\int_0^tA(t_1)\,dt_1 + s^2\int_0^t\int_0^{t_1}A(t_1)A(t_2)\,dt_2\,dt_1 + o(s^2),
			\\
			X_{sA}(t)X_{sA}(t)^T &= \I + s\int_0^tA(t_1)\,dt_1 + s\int_0^tA^T(t_1)\,dt_1
			+ s^2 \int_0^tA(t_1)\,dt_1 \int_0^tA^T(t_1)\,dt_1
			\\
			&+ s^2\int_0^t\int_0^{t_1}A(t_1)A(t_2)\,dt_2\,dt_1 + s^2\int_0^t\int_0^{t_1}(A(t_1)A(t_2))^T\,dt_2\,dt_1 + o(s^2).
		\end{align*}
		Since $g_p$ and $g_q$ are antiderivatives of $p$ and $q$ we have 
		\begin{gather*}
			\int_0^tA(t_1)\,dt_1 = \int_0^tA^T(t_1)\,dt_1 = 
			\begin{pmatrix}
				-g_q(t) & g_p(t)
				\\
				g_p(t) & g_q(t)
			\end{pmatrix},
			\\
			\int_0^tA(t_1)\,dt_1\int_0^tA^T(t_1)\,dt_1 = (g_p^2(t) + g_q^2(t))\I.
		\end{gather*}
		Next, we write
		\begin{align*}
			A(t_1)A(t_2) &= 
			\begin{pmatrix}
				-q(t_1) & p(t_1)
				\\
				p(t_1) & q(t_1)
			\end{pmatrix}
			\begin{pmatrix}
				-q(t_2) & p(t_2)
				\\
				p(t_2) & q(t_2)
			\end{pmatrix}
			\\
			&=
			\begin{pmatrix}
				q(t_1)q(t_2) +  p(t_1) p(t_2)  & -q(t_1)p(t_2) + p(t_1)q(t_2)
				\\
				q(t_1)p(t_2) - p(t_1)q(t_2) & q(t_1)q(t_2) +  p(t_1) p(t_2)
			\end{pmatrix}
			.
		\end{align*}
		Therefore $A(t_1)A(t_2) + (A(t_1)A(t_2))^T = 2\big(q(t_1)q(t_2) +  p(t_1) p(t_2)\big) \I$. Also we have 
		\begin{gather*}
			\int_0^t\int_0^{t_1} q(t_1)q(t_2)\,dt_2dt_1 = \frac{g_q(t)^2}{2}, \quad \int_0^t\int_0^{t_1} p(t_1)p(t_2)\,dt_2dt_1 = \frac{g_p(t)^2}{2},
			\\
			\int_0^t\int_0^{t_1}A(t_1)A(t_2) + (A(t_1)A(t_2))^T\,dt_2\,dt_1 
			= (g_q^2(t) + g_p^2(t))\I.
		\end{gather*}
		Hence we have
		\begin{align*}
			X_{sA}(t)X_{sA}(t)^T = \I +
			2s
			\begin{pmatrix}
				-g_q(t) & g_p(t)
				\\
				g_p(t) & g_q(t)
			\end{pmatrix}
			+2s^2 (g_q^2(t) + g_p^2(t)) \I + o(s^2).
		\end{align*}
		Integrating and taking the determinant, we get
		\begin{align*}
			a_2 &= -4\left(\int_0^1 g_p(t)\,dt\right)^2 - 4\left(\int_0^1 g_q(t)\,dt\right)^2 
			\\
			&+ 4\int_0^1g_p(t)^2\,dt + 4\int_0^1g_q(t)^2\,dt = 4\var(g_p) + 4\var(g_q). 
		\end{align*}
	\end{proof}
	\subsubsection{Reformulation of Theorem \ref{thm: entropy and variation}} We see that the function $N_a$ from \eqref{eq: Na def} is an ordered exponential of the matrix function $JQ(t)$. Definitions \eqref{eq: entropy def} and \eqref{eq: def F_A} of $E_a$ and $F_{JQ}$ are similar, the only difference is the length of the integration segment. On $[0,1]$ define $A_r(t) = 2JQ(r + 2t)$. Then we have 
	\begin{gather*}
		(N_a(r + 2t))' = 2N_a'(r + 2t) = 2JQ(r + 2t)N_a(r + 2t) = A_r(t)N_a(r + 2t)
	\end{gather*}
	hence $X_{A_r}(t) = N_a(r + 2t)$ and 
	\begin{gather*}
		F_{A_r}(1) = \det\left(\int_0^1X_{A_r}(t)X_{A_r}^{T}(t)\,dt\right) = \det\left(\frac{1}{2}\int_r^{r + 2}N_a(t)N_a^{T}(t)\,dt\right) = \frac{1}{4}E_a(r) + 1.
	\end{gather*}
	Thus, if we want to estimate $E_a$ we can work with $F_{A_r}(1) - 1$. We have $A_r = \smatrix{-q_r}{p_r}{p_r}{q_r}$ where $p_r = 4\Re a(r + 2t)$ and $q_r(t) = 4\Im a(r + 2t)$, this follows from the definition of $A_r$ and $Q = Q_a$. Hence $D_a(r) = \var(g_p) + \var(g_q)$. 
	Therefore
	Theorem \ref{thm: entropy and variation} follows from  the following result.
	\begin{Thm}\label{thm: ordered exponential}
		Let $A$ be a matrix-valued function on $[0,1]$ of the form $\smatrix{-q}{p}{p}{q}$ with $p,q\in L^2([0,1])$. Let $g_p(t) = \int_0^tp(x)\,dx$ and $g_q(t) = \int_0^t q(x)\,dx$ be the antiderivatives of $p$ and $q$ respectively. Let
		\begin{gather*}
			\var(g_p) + \var(g_q)= \eps, \qquad \|p\|_{L^2([0,1])} + \|q\|_{L^2([0,1])} = \delta.
		\end{gather*}
		Define $F_A$ and $\{a_n\}_{n\ge 0}$ as in \eqref{eq: def F_A} then $\sum_{n\ge 4}|a_n| = o(\eps)$ as $\eps,\delta\to 0$ and consequently $F_A(1) = 1 + a_2 + o(\eps) = 1 + 4\eps + o(\eps)$ as $\eps,\delta\to 0$.
	\end{Thm}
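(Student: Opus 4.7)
The strategy is to reduce $F_A(s)$ to a closed form in two auxiliary variables and then control the resulting corrections using Lemma \ref{Lemma 1}.

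First, since $A = \smatrix{-q}{p}{p}{q}$ is traceless, Liouville's formula gives $\det X_{sA}(t) = 1$ for all $s,t$, so $Y_s(t) := X_{sA}(t) X_{sA}^T(t)$ is symmetric positive-definite with $\det Y_s = 1$. Parameterizing $Y_s = \smatrix{(u-v)/2}{w}{w}{(u+v)/2}$ encodes this as $u^2 - v^2 - 4w^2 = 4$, and the ODE $Y_s' = s(AY_s + Y_sA)$ becomes $u' = 2s(qv+2pw)$, $v' = 2squ$, $w' = spu$. Introducing $P(t) := s\int_0^t pu\,d\tau$ and $Q(t) := s\int_0^t qu\,d\tau$, I find $v = 2Q$, $w = P$, and $(u^2)' = 4(P^2+Q^2)'$, which integrates with $u(0) = 2$ to $u = 2\sqrt{1+P^2+Q^2}$. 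Writing $U := \sqrt{1+P^2+Q^2}$ one also checks $U' = 2s(pP+qQ)$. Evaluating $F_A(s) = \det\int_0^1 Y_s\,dt = \tfrac14(\int u)^2 - \tfrac14(\int v)^2 - (\int w)^2$ and using $\int u^2 = 4 + \int v^2 + 4\int w^2$ yields the key identity
\[
F_A(s) - 1 = \var(P_s) + \var(Q_s) - \var(U_s).
\]

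Second, I Taylor-expand in $s$. A Gronwall bootstrap applied to $P_s' = 2spU_s$ (and the analogous equation for $Q_s$) gives, for $|s|\delta$ below a small absolute constant, $\|P_s\|_\infty + \|Q_s\|_\infty \ls |s|\delta$, $\|U_s - 1\|_\infty \ls (|s|\delta)^2$, and $\|\eta_{P,s}\|_\infty \ls (|s|\delta)^3$ where $\eta_{P,s} := P_s - 2sg_p$. Expanding $U_s = 1 + \tfrac12(P_s^2+Q_s^2) + O((P_s^2+Q_s^2)^2)$ and substituting into the key identity, the leading terms produce $F_A(s) - 1 = 4s^2(\var(g_p) + \var(g_q)) + O(s^4)$, recovering $a_2 = 4\eps$ in agreement with Lemma \ref{lemma: first coefficient general situation}.

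The main work is to control the higher coefficients $a_{2k}$ for $k \ge 2$ (odd coefficients vanish since $F_A$ is even). After integration by parts using $U_s' = 2s(pP_s + qQ_s)$ and the primitive relation $\int_0^t q g_q^2\,d\tau = g_q(t)^3/3$, each such contribution decomposes into (a) quantities of the form $\int g_p^4$, $\int g_q^4$, $\int g_p^2 g_q^2$ multiplied by powers of $\delta$, and (b) smaller corrections involving $\eta_{P,s}, \eta_{Q,s}$. Lemma \ref{Lemma 1} applied to $F = g_p$ (with $\var(F) \le \eps$ and $\|F'\|_{L^2} \le \delta$) yields $\|g_p\|_\infty \le 4\gamma$ for $\gamma = \sqrt\eps + \eps^{1/4}\sqrt\delta$ and $\|g_p^2 - C_{g_p}^2\|_{L^2} \le 6\sqrt\eps\gamma$; hence $\int g_p^4 \le C(\gamma^4 + \gamma^2 \eps)$, and similarly for the other terms. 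Since $\gamma^2 \le 2(\eps + \sqrt\eps\delta)$ and $\gamma^4 \le 8(\eps^2 + \eps\delta^2)$, both upper bounds are $o(\eps)$ as $\eps,\delta \to 0$. Establishing $|a_{2k}| \le C^k \delta^{2k-4} \cdot o(\eps)$ for $k \ge 2$ by tracking the combinatorics of the iterated integrals, and summing for $\delta$ below a small constant, then gives $\sum_{n \ge 4}|a_n| = o(\eps)$, and consequently $F_A(1) = 1 + 4\eps + o(\eps)$.

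The main obstacle is this last step: the naive bound $\|\eta_{P,s}\|_\infty \ls (|s|\delta)^3$ only produces a correction of size $\delta^4$ in $F_A(1)$, which need not be $o(\eps)$ when $\eps$ is much smaller than $\delta^4$ (a regime compatible with the automatic constraint $\eps \le \delta^2$). The resolution is systematic integration by parts in every term containing $\eta_{P,s}$ or $\eta_{Q,s}$, which re-expresses these corrections as $\int g^4$-type quantities to which Lemma \ref{Lemma 1} applies effectively.
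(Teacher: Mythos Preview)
Your closed-form identity
\[
F_A(s)-1=\var(P_s)+\var(Q_s)-\var(U_s),
\]
derived from the parametrisation of $Y_s=X_{sA}X_{sA}^T$ with $\det Y_s=1$, is correct and is a genuinely different and elegant starting point from the paper's. The paper never writes such an identity; instead it expands $X_{sA}=\sum s^kM_k$, computes $a_{2n}$ as a sum of mixed determinants of the matrices $L_k=\int_0^1\sum_m M_mM_{k-m}^T$, and bounds each $a_{2n}$ by iterated-integral estimates (its Lemmas~\ref{lemma: long product bound} and~\ref{lemma: short product integrals}). Your route would, if it could be completed, replace a long combinatorial expansion by a single variance identity.

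The gap is in the final step, and it is real. You correctly isolate the obstacle: in the regime $\eps\ll\delta^4$ the naive $\|\eta_{P,s}\|_\infty\lesssim\delta^3$ is useless. Your proposed resolution, however, is only asserted. You claim that after integration by parts every correction reduces to quantities of the type $\int g_p^4,\ \int g_q^4,\ \int g_p^2g_q^2$ times powers of $\delta$, and these are then handled by Lemma~\ref{Lemma 1}. But the corrections contain mixed integrals such as $\int_0^t p\,g_q^2$ (equivalently $(pqq)_t$), and integration by parts does \emph{not} reduce these to $\int g^4$-type quantities: one only gets $g_p g_q^2-2\int g_p g_q q$, and further IBP is circular. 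Bounding $(pqq)_t$ by $O(\gamma^3)$ rather than $O(\delta^3)$ requires a separate argument, precisely the content of the paper's Lemma~\ref{lemma: short product integrals}. Similarly, your claimed uniform tail estimate $|a_{2k}|\le C^k\delta^{2k-4}\cdot o(\eps)$ is not derived; it would need something like the paper's Lemma~\ref{lemma: long product bound} on long iterated integrals. In short, your variance identity shifts the bookkeeping but does not bypass the core analytic work: you still need the $O(\gamma^3)$ and $O(\gamma^4)$ bounds on the specific mixed iterated integrals, and those are exactly what the paper spends Lemmas~\ref{lemma: long product bound}--\ref{lemma: short product integrals} and the explicit $a_4,a_6$ computations establishing.
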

	We will see from the proof of the theorem that the numbers $a_n$ decay very fast and $F_A(s) = 1 + 4\eps s^2 + o(\eps)$ holds for every $s\in \R$.
	
	\subsection{Diagonal case}
	When the matrix $A(t)$ is diagonal, the numbers $a_n$ can be calculated explicitly and Theorem \ref{thm: ordered exponential} can be proved very shortly. 
	\begin{Lem}\label{Lem: coeeficients in the diagonal case}
		If $A(t) = \smatrix{-q}{0}{0}{q}$ is diagonal and $g(t) = \int_0^tq(s)\,ds$ then we have
		\begin{gather*}
			a_n = \frac{2^n}{n!}\int_0^1\int_0^1(g(x) - g(y))^n\,dx\,dy, \qquad n\ge 0.
		\end{gather*}
	\end{Lem}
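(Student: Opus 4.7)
The plan is to exploit the fact that when $A(t)$ is diagonal, the matrices $\{A(t)\}_{t\in[0,1]}$ pairwise commute, so the ordered exponential collapses to the ordinary matrix exponential. First I would observe that
\begin{gather*}
X_{sA}(t) = \exp\!\left(s\int_0^t A(\tau)\,d\tau\right) = \exp\!\left(s\begin{pmatrix}-g(t) & 0\\ 0 & g(t)\end{pmatrix}\right) = \begin{pmatrix}e^{-sg(t)} & 0\\ 0 & e^{sg(t)}\end{pmatrix};
\end{gather*}
this can be checked either by verifying that the right-hand side satisfies $X'(t) = sA(t)X(t)$ with $X(0) = \I$ (using that $A(t)$ and $\int_0^t A(\tau)\,d\tau$ commute), or directly from the series representation \eqref{explicit representaion ordered exponent}.

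Since $X_{sA}(t)$ is diagonal, the product $X_{sA}(t)X_{sA}^T(t) = \smatrix{e^{-2sg(t)}}{0}{0}{e^{2sg(t)}}{}$ is also diagonal, so
\begin{gather*}
\int_0^1 X_{sA}(t)X_{sA}^T(t)\,dt = \begin{pmatrix}\int_0^1 e^{-2sg(x)}\,dx & 0\\ 0 & \int_0^1 e^{2sg(y)}\,dy\end{pmatrix},
\end{gather*}
whose determinant is
\begin{gather*}
F_A(s) = \int_0^1\!\!\int_0^1 e^{2s(g(y) - g(x))}\,dx\,dy.
\end{gather*}

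Finally I would expand the exponential in a Taylor series in $s$ and interchange summation and integration (justified by absolute convergence, since $g$ is bounded on $[0,1]$), obtaining
\begin{gather*}
F_A(s) = \sum_{n\ge 0}\frac{2^n s^n}{n!}\int_0^1\!\!\int_0^1 (g(y) - g(x))^n\,dx\,dy.
\end{gather*}
Comparing coefficients with \eqref{eq: def F_A} and noting that $(g(y)-g(x))^n$ and $(g(x)-g(y))^n$ yield the same double integral (they are equal for even $n$, and both vanish by antisymmetry for odd $n$), one reads off the claimed formula for $a_n$. There is no real obstacle here; the only point that requires a moment's care is the justification that $X_{sA}$ reduces to the ordinary matrix exponential, which rests on the commutativity of the diagonal matrices $A(t)$.
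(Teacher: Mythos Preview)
Your proof is correct and essentially follows the paper's approach: both compute $X_{sA}$ as an ordinary exponential, obtain $F_A(s)=\int_0^1 e^{2sg}\,dt\cdot\int_0^1 e^{-2sg}\,dt$, and expand in Taylor series. The only cosmetic difference is that you merge the product into a double integral $\int_0^1\int_0^1 e^{2s(g(y)-g(x))}\,dx\,dy$ before expanding, whereas the paper expands each factor separately and then applies the binomial identity to recognize $(g(x)-g(y))^n$; your route is slightly more direct.
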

	\begin{proof}
		For $s\in \R$ we can write
		\begin{gather*}
			X_{sA}(t) = \exp\left(s\int_0^t A(x)dx\right) = 
			\begin{pmatrix}
				e^{-s g(t)} & 0
				\\
				0 & e^{s g(t)}
			\end{pmatrix},
			\\
			F_A(s) = \int_0^1 e^{2sg(t)}\,dt\cdot \int_0^1 e^{-2sg(t)}\,dt.
		\end{gather*}
		Expanding the Taylor series of the exponential, we get
		\begin{align*}
			F_A(s) &=\int_0^1 \sum_{k\ge 0}\frac{(2s)^kg^k(t)}{k!}\,dt \cdot \int_0^1 \sum_{m\ge 0}\frac{(-2s)^mg^m(t)}{m!}\,dt 
			\\
			&= \sum_{k = 0}^{\infty}\sum_{m = 0}^{\infty}\frac{(-1)^m(2s)^{k + m}}{k! m!}\int_0^1g^k(t)\,dt  \int_0^1g^m(t)\,dt.
		\end{align*}
		Changing the order of summation, we get the explicit formula for $a_n$:
		\begin{gather*}
			a_n = 2^n\sum_{l =0}^n\frac{(-1)^l}{l!(n - l)!} \int_0^1g^{n-l}(t)\,dt  \int_0^1g^l(t)\,dt = \frac{2^n}{n!}\int_0^1\int_0^1(g(x) - g(y))^n\,dx\,dy.
		\end{gather*}
		In particular, we see that $a_n = 0$ if $n$ is odd and 
		\begin{gather*}
			a_2 = 2\int_0^1\int_0^1(g(x) - g(y))^2\,dx\,dy = 4\int_0^1g(x)^2\,dx - 4 \left(\int_0^1g(x)\,dx\right)^2 = 4\var(g),
		\end{gather*}
		which is consistent with Lemma \ref{lemma: first coefficient general situation}.
	\end{proof}
	\begin{proof}[Proof of Theorem \ref{thm: ordered exponential} in the diagonal case]
		From the formula established for $a_n$ in Lemma \ref{Lem: coeeficients in the diagonal case} we get
		\begin{gather*}
			|a_n|\le \frac{2^{n-1}|a_2|}{n!}\sup_{x,y\in [0,1]}\left|g(x) - g(y)\right|^{n- 2} 
			\le \frac{2^{n-1}\cdot 4\eps}{n!} \delta^{n - 2},
		\end{gather*}
		because $a_2 = 4\eps$ by Lemma \ref{lemma: first coefficient general situation} and $|g(x) - g(y)|\le\|q\|_{L^1([0,1])}\le\|q\|_{L^2([0,1])} =\delta$.
		The estimate $\sum_{n > 2}{a_n} = o(\eps)$ follows.
	\end{proof}
	\begin{Rema}
		The same argument works  when $A = \smatrix{0}{p}{p}{0}$ or, more generally, when $A(t_1)$ and $A(t_2)$ commute for almost every $t_1, t_2$.
	\end{Rema}
	\subsection{Auxiliary results}
	The proof of Theorem \ref{thm: ordered exponential} in general situation requires technical details. 
	To simplify the exposition we introduce the notation
	\begin{gather}\label{eq: short integral def}
		(f_1\ldots f_n)_t = \int_0^t\ldots\int_0^{t_{n - 1}}f_1(t_1)\ldots f_n(t_n)\,dt_n\ldots\,dt_1,
	\end{gather}
	where we assume  $f_1, \ldots,f_n\in L^1([0,1])$ and $t\in [0,1]$. The following lemma will be used to bound terms with large indexes in \eqref{explicit representaion ordered exponent}.
	
	\begin{Lem}\label{lemma: long product bound}
		Let $f_1,\ldots,f_k\in L^2([0,1])$ be real-valued functions on $[0,1]$ and $F_k(x) = \int_0^x f_k(s)\,ds$ be their antiderivatives for $k = 1,\ldots, n$. Assume that each $F_k$ satisfies assertions from  \eqref{eq: epsilon delta assertion on the antiderivative}. Let $ \gamma$ be as in Lemma \ref{Lemma 1} then we have
		\begin{gather*}
			|(f_1\ldots, f_n)_{t}| \le (8\gamma)^m, 
			\qquad m = [(n + 1) / 2].
		\end{gather*}
		If $n$ is even and $f_{2i -1} = f_{2i}$ for some $1\le i\le n/2$ then the same inequality holds with $m = n / 2 + 1$. 
	\end{Lem}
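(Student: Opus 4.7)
The plan is to prove the lemma by induction on $n$. The base case $n=1$ is immediate from Lemma \ref{Lemma 1}, which gives $|(f_1)_t| = |F_1(t)| \le 4\gamma \le 8\gamma$.

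For the inductive step, I would integrate by parts in the outermost integral. Since $F_1(0) = 0$,
\[
(f_1 \ldots f_n)_t \;=\; F_1(t)(f_2 \ldots f_n)_t \;-\; \int_0^t F_1(s)\,f_2(s)\,(f_3 \ldots f_n)_s\,ds.
\]
The first summand is bounded by $|F_1(t)| \cdot \|(f_2 \ldots f_n)_\cdot\|_{L^\infty} \le 4\gamma\cdot (8\gamma)^{[n/2]}$, using the uniform bound $|F_1| \le 4\gamma$ from Lemma \ref{Lemma 1} and the induction hypothesis applied to the $(n-1)$-fold integral. The second summand is estimated by pulling out $|F_1| \le 4\gamma$ pointwise, then applying the Cauchy--Schwarz inequality and the induction hypothesis to obtain the bound $4\gamma \cdot \|f_2\|_{L^2}\cdot \|(f_3 \ldots f_n)_\cdot\|_{L^\infty} \le 4\gamma\cdot \delta\cdot (8\gamma)^{[(n-1)/2]}$. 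Checking the two parities of $n$ and summing these contributions yields the claimed inequality in the regime where $\gamma$ and $\delta$ are small (this is the regime of Theorem \ref{thm: ordered exponential}, where $\epsilon,\delta \to 0$ force $\gamma \to 0$ as well).

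For the refinement when $n$ is even and $f_{2i-1} = f_{2i} = f$, the idea is to exploit the identity $\int_0^s f(u)F(u)\,du = F(s)^2/2$. This gives a quadratic bound $|F(s)^2/2| \le 8\gamma^2$, as opposed to the linear estimate $4\gamma\cdot\delta$ that a generic adjacent pair yields. Concretely, I would apply integration by parts to the pair of integrations in positions $2i-1, 2i$; because the two integrands coincide, the surviving boundary term is proportional to $F(s)^2$ rather than $F(s)\cdot\|f\|_{L^1}$, and this saves one extra factor of $\gamma$ when combined with the first part of the lemma applied to the remaining iterated integral of length $n-2$. Propagating this gain through the induction upgrades the bound from $(8\gamma)^{n/2}$ to $(8\gamma)^{n/2+1}$.

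The main obstacle I anticipate is constant-tracking. The inductive step produces the recurrence $4\gamma\cdot A_{n-1} + 4\gamma\delta\cdot A_{n-2} \le A_n$, which forces smallness conditions such as $8\gamma + \delta \le 2$ in the even case and $\delta \le 1$ in the odd case; these are consistent with (but not explicitly stated in) the lemma and follow from the smallness regime of its intended application. For the adjacent-pair refinement, the subtle point is that integration by parts on the two middle integrations produces an error term in which $F$ multiplies a shorter iterated integral that is \emph{not} of the canonical form $(g_1 \ldots g_k)_t$; bounding it requires a separate application of Cauchy--Schwarz together with the uniform estimate $|F| \le 4\gamma$, so that every ``unpaired'' $f_j$ costs only $\delta$ while every ``paired'' $F_j$ contributes a $\gamma$.
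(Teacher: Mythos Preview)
Your inductive integration-by-parts approach is a genuinely different route from the paper's. The paper does not induct: it rewrites the simplex integral by Fubini, making the even-indexed variables $t_2,t_4,\dots$ the outer variables and the odd-indexed $t_1,t_3,\dots$ the inner ones. Each inner integral is then $\int_{t_{2k}}^{t_{2k-2}} f_{2k-1} = F_{2k-1}(t_{2k-2})-F_{2k-1}(t_{2k})$, bounded by $8\gamma$ via Lemma~\ref{Lemma 1}; the outer integrals only cost $\prod\|f_{2l}\|_{L^1}\le \delta^{m-1}\le 1$. This gives the bound in one shot with no recursion and no smallness juggling. Your induction reproduces the same estimate for the basic statement, at the price of the constraints $\delta\le 1$ and $8\gamma+\delta\le 2$ that you correctly flag; in the intended regime this is fine.

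The place where your proposal is genuinely incomplete is the refinement for even $n$ with $f_{2i-1}=f_{2i}$. Your plan is to ``integrate by parts at positions $2i-1,2i$'' and use $\int fF = F^2/2$, but in the iterated integral the factor $(f_{2i+1}\dots f_n)_{t_{2i}}$ sits inside the $t_{2i}$-integral and blocks the clean antiderivative; a single integration by parts there leaves a cross term of size $\gamma\delta^2(8\gamma)^{m-1}$, which does \emph{not} meet $(8\gamma)^{m+1}$ without the unjustified bound $\delta^2\lesssim\gamma$. One can repair this (for $i=1$, say) by a second integration by parts converting the residual outer $f$ into another $F$, but you have not carried this out, and for general $i$ it becomes awkward. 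The paper's device is much cleaner: it simply shifts the outer/inner partition so that \emph{both} $t_{2i-1}$ and $t_{2i}$ are inner variables (outer variables become $t_2,\dots,t_{2i-2},t_{2i+1},\dots,t_{2m-1}$); the paired inner integral is then exactly $\tfrac12(F(t_{2i-2})-F(t_{2i+1}))^2\le 32\gamma^2$, yielding the extra factor of $\gamma$ directly.
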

	\begin{proof}
		Assume that $n$ is odd, $n = 2m - 1$. We can change the order of integration so that
		\begin{gather*}
			(f_1\ldots f_n)_t = \int_0^{t_0}\,dt_2\int_0^{t_2}\,dt_4\ldots\int_0^{t_{2m - 2}}\,dt_{2m}\left(\prod_{l = 1}^{m-1} f_{2l}\cdot\prod_{k = 1}^{m }\int_{t_{2k}}^{t_{2k - 2}} f_{2k - 1}\,dt_{2k - 1}\right),
		\end{gather*}
		where $t_0 = t$ and $t_{2m} = 0$.
		For every $1\le k \le m$, by Lemma \ref{Lemma 1}, we have 
		\begin{gather}\label{eq: ineq for single inner integral}
			\left|\int_{t_{2k}}^{t_{2k - 2}} f_{2k - 1}(t_{2k - 1})\,dt_{2k - 1}\right| = |F_{2k - 1}(t_{2k - 2}) - F_{2k - 1}(t_{2k})|\le 8\gamma. 
		\end{gather}
		Therefore we can write
		\begin{gather*}
			\left|\prod_{k = 1}^{m }\int_{t_{2k}}^{t_{2k - 2}} f_{2k - 1}(t_{2k - 1})\,dt_{2k - 1}\right|\le  8^m\gamma^m,
			\qquad
			\left|(f_1\ldots f_n)_t\right|\le 8^m\gamma^m\prod_{l = 1}^{m-1} \|f_{2l}\|_{L^1([0,1])}.
		\end{gather*}
		To finish the proof in the case of odd $n$ notice that $\|f_{2l}\|_{L^1([0,1])} = \|F_{2l}'\|_{L^1([0,1])}\le \delta < 1$ by \eqref{eq: epsilon delta assertion on the antiderivative}. When $n = 2m$ is even we proceed similarly: take the outer integrals over $t_2, t_4,\ldots, t_{2m}$ and the inner over $t_1, t_3, \ldots, t_{2m - 1}$.
		
		To obtain sharper inequality for the situation when $n$ is even and $ f_{2i - 1} = f_{2i} = f$ we let the outer integrals be over the variables $t_2,t_4\ldots, t_{2i - 2}$ and $t_{2i + 1},\ldots t_{2m - 1}$.  The product in the inner integral then becomes 
		\begin{align*}
			\prod_{l = 1}^{i - 1}f_{2l}(t_{2l})&\cdot \prod_{l = i}^{m - 1}f_{2l + 1}(t_{2l + 1})\cdot\prod_{k = 1}^{i - 1}\int_{t_{2k}}^{t_{2k - 2}} f_{2k - 1}(t_{2k - 1})\,dt_{2k - 1} 
			\\
			&\times \int_{t_{2i + 1}}^{t_{2i - 2}}\int_{t_{2i + 1}}^{t_{2i - 1}}f(t_{2i})f(t_{2i - 1})\,dt_{2i}\,dt_{2i - 1}
			\cdot
			\prod_{k = i + 1}^{m}\int_{t_{2k + 1}}^{t_{2k - 1}} f_{2k}(t_{2k})\,dt_{2k}.
		\end{align*}
		We see that there is only one new integral. We have 
		\begin{gather*}
			\int_{t_{2i + 1}}^{t_{2i - 2}}\int_{t_{2i + 1}}^{t_{2i - 1}}f(t_{2i})f(t_{2i - 1})\,dt_{2i}\,dt_{2i - 1} = \frac{(F(t_{2i - 2}) - F(t_{2i + 1}))^2}{2},
		\end{gather*}
		which is not greater than $32\gamma^2$ by Lemma \ref{Lemma 1}. To conclude the proof we use the same bound as in \eqref{eq: ineq for single inner integral}.
	\end{proof}
	\begin{Lem}\label{lemma: short product integrals}
		Let $f,g\in L^2([0,1])$ and $F(t) = \int_0^tf(x)\,dx$, $G(t) = \int_0^tg(x)\,dx$. Assume that $F$ and $G$ satisfy the assertions in \eqref{eq: epsilon delta assertion on the antiderivative}. Then we have
		\begin{gather*}
			|(ffgg)_t|\le 160\gamma^4,
			\qquad
			|(fgg)_t|\le 11\gamma^3,
			\qquad
			|(ffg)_t|\le 79\gamma^3.
		\end{gather*}   
	\end{Lem}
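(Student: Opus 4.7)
The strategy is to reduce each iterated integral to an ordinary (non-iterated) integral by computing interior integrals explicitly (when possible) or by applying Fubini, and then to split each factor into its mean value on $[0,1]$ plus a zero-mean remainder so that Lemma \ref{Lemma 1} can be applied to every piece. The single recurring numerical fact we need, beyond Lemma \ref{Lemma 1}, is the inequality $\delta\sqrt{\eps}\le\gamma^2$, which is immediate from $\gamma=\eps^{1/2}+\eps^{1/4}\delta^{1/2}\ge \eps^{1/4}\delta^{1/2}$.

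I start with $(fgg)_t$ because it underlies the other two. The innermost integration gives $\int_0^{t_1}g(t_2)G(t_2)\,dt_2=\tfrac12 G(t_1)^2$, hence
\begin{gather*}
	(fgg)_t=\tfrac12\int_0^t f(t_1)G(t_1)^2\,dt_1=\tfrac{C_G^2}{2}F(t)+\tfrac12\int_0^t f(t_1)\bigl(G(t_1)^2-C_G^2\bigr)\,dt_1.
\end{gather*}
The first term is bounded by $\tfrac12(2\gamma)^2(4\gamma)=8\gamma^3$ using the estimates $|C_G|\le2\gamma$, $|F|\le4\gamma$ from Lemma \ref{Lemma 1}. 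The second is controlled by Cauchy–Schwarz via $\|f\|_{L^2}\cdot\|G^2-C_G^2\|_{L^2}\le\delta\cdot 6\sqrt{\eps}\gamma\le 6\gamma^3$, giving $|(fgg)_t|\le 11\gamma^3$.

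For $(ffg)_t$ I apply Fubini to the two outermost variables $t_1\ge t_2\ge t_3$ to obtain the single-variable expression
\begin{gather*}
	(ffg)_t=\tfrac12\int_0^t g(t_3)\bigl(F(t)-F(t_3)\bigr)^2\,dt_3=\tfrac12 F(t)^2G(t)-F(t)\!\int_0^t\!gF+\tfrac12\!\int_0^t\!gF^2.
\end{gather*}
The first term is bounded by $32\gamma^3$ directly. In the second, I write $F=C_F+(F-C_F)$ so that $\int_0^t gF=C_F G(t)+\int_0^t g(F-C_F)$; the two pieces give $8\gamma^2+\delta\sqrt{\eps}\le 9\gamma^2$, and multiplying by $|F(t)|\le 4\gamma$ yields $36\gamma^3$. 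In the third, the decomposition $F^2=C_F^2+(F^2-C_F^2)$ together with the Cauchy–Schwarz bound $\|F^2-C_F^2\|_{L^2}\le 6\sqrt{\eps}\gamma$ from Lemma \ref{Lemma 1} and the inequality $\delta\sqrt{\eps}\le\gamma^2$ gives $|\int_0^t gF^2|\le 16\gamma^3+6\gamma^3=22\gamma^3$, whence $|(ffg)_t|\le 32+36+11=79\gamma^3$.

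Finally, for $(ffgg)_t$, the innermost integral again collapses to $\tfrac12 G(t_2)^2$, and Fubini on the two outer variables yields
\begin{gather*}
	(ffgg)_t=F(t)(fgg)_t-\tfrac12\int_0^t f(t_2)F(t_2)G(t_2)^2\,dt_2.
\end{gather*}
The first summand is bounded by $4\gamma\cdot 11\gamma^3=44\gamma^4$. For the second summand I split $F=C_F+(F-C_F)$, turning the integral into $2C_F(fgg)_t+\int_0^t f(F-C_F)G^2$; the first piece is at most $4\gamma\cdot 11\gamma^3=44\gamma^4$. In the remaining piece I further split $G^2=C_G^2+(G^2-C_G^2)$: the term with $C_G^2$ uses the elementary identity $\int_0^t f(F-C_F)=\tfrac12(F(t)-C_F)^2-\tfrac12 C_F^2$, both terms of which are $\le 2\gamma^2$, so the contribution is $\le C_G^2\cdot 4\gamma^2\le 16\gamma^4$; the term with $G^2-C_G^2$ is bounded by $\|f\|_{L^2}\|F-C_F\|_{L^\infty}\|G^2-C_G^2\|_{L^2}\le 12\delta\sqrt{\eps}\gamma^2\le 12\gamma^4$. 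Combining all contributions gives $|(ffgg)_t|\le 44+\tfrac12(44+16+12)\gamma^4\le 160\gamma^4$ with room to spare. The main technical nuisance throughout is the bookkeeping: one must consistently peel off mean values to ensure that whenever Cauchy–Schwarz is invoked, the $L^2$ norm of one factor is controlled by $\sqrt{\eps}$ (via $\var$) so that the product $\delta\sqrt{\eps}$ absorbs into $\gamma^2$ and yields the right overall power of $\gamma$.
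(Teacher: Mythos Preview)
Your proof is correct and follows essentially the same route as the paper: reduce the iterated integrals to single integrals by collapsing repeated factors into squares of antiderivatives, then subtract off means so that every application of Cauchy--Schwarz produces a factor $\sqrt{\eps}\,\delta\le\gamma^2$. For $(fgg)_t$ and $(ffg)_t$ your computations are line-for-line the paper's; for $(ffgg)_t$ you choose a slightly different Fubini ordering---collapsing the inner $gg$ to $\tfrac12G^2$ first and then recycling the $(fgg)_t$ bound---whereas the paper makes $t_3$ the outer variable and expands $(F(t)-F(t_3))^2$, but the resulting terms and estimates are the same in spirit and your final constant is in fact sharper.
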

	\begin{proof}
		These bounds are sharper than the bounds in Lemma \ref{lemma: long product bound} in the degree of $\gamma$. Let us proceed  more carefully. We have
		\begin{align}
			\nonumber
			(ffgg)_t &= \int_0^tg(t_3)\cdot\left[\int_{t_3}^t\int_{t_3}^{t_1}f(t_1)f(t_2)\,dt_2\,dt_1\right]\cdot \left[\int_0^{t_3}g(t_4)\,dt_4\right]\,dt_3
			\\ 
			\nonumber
			&= \int_0^tg(t_3)\cdot\frac{(F(t) - F(t_3))^2}{2}\cdot G(t_3)\,dt_3
			\\
			\nonumber
			&= \frac{F(t)^2}{2}\int_0^tg(t_3)G(t_3)\,dt_3 - F(t)\int_0^tF(t_3)g(t_3)G(t_3)\,dt_3 
			\\
			\label{eq: ffgg bound}
			&+ \frac{1}{2}\int_0^tF^2(t_3)g(t_3)G(t_3)\,dt_3.
		\end{align}
		Let $C_F = \int_0^1 F(x)\,dx$ and $C_G = \int_0^1 G(x)\,dx$. Lemma \ref{Lemma 1} gives
		\begin{gather}
			\label{eq: tmp bounds 1}
			|C_F|\le 2\gamma,
			\qquad
			\sup_{t\in [0,1]}|F(t)|\le 4\gamma,
			\qquad
			\|F^2 - C_F^2\|_{L^2([0,1])} \le 6\eps^{1/2}\gamma,
			\\
			\label{eq: tmp bounds 2}
			|C_G|\le 2\gamma,
			\qquad
			\sup_{t\in [0,1]}|G(t)|\le 4\gamma,
			\qquad
			\|G^2 - C_G^2\|_{L^2([0,1])} \le 6\eps^{1/2}\gamma.
		\end{gather}
		For the first integral in \eqref{eq: ffgg bound} we have 
		\begin{gather*}
			\frac{F(t)^2}{2}\int_0^tg(t_3)G(t_3)\,dt_3 = \frac{F(t)^2G(t)^2}{4},
			\\
			\left|\frac{F(t)^2}{2}\int_0^tg(t_3)G(t_3)\,dt_3 \right|\le 64\gamma^4.
		\end{gather*}
		Furthermore, rewrite 
		\begin{align*}
			\int_0^tF(t_3)g(t_3)G(t_3)\,dt_3 &= C_F\int_0^tg(t_3)G(t_3)\,dt_3 + \int_0^t(F(t_3) - C_F)g(t_3)G(t_3)\,dt_3
			\\
			&= \frac{C_FG(t)^2}{2} + \int_0^t(F(t_3) - C_F)g(t_3)G(t_3)\,dt_3.
		\end{align*}
		Inequalities in \eqref{eq: epsilon delta assertion on the antiderivative} imply $\|F - C_F\|_{L^2([0,1])}\le \eps^{1/2}$ and $\|g\|_{L^2([0,1])}\le \delta$. Use this, H\"{o}lder inequality, the bounds from \eqref{eq: tmp bounds 1}, \eqref{eq: tmp bounds 2} and $\eps^{1/2}\delta\le \gamma^2$ to get
		\begin{align*}
			\left|\int_0^tF(t_3)g(t_3)G(t_3)\,dt_3\right|&\le \frac{|C_FG(t)^2|}{2} + \|F - C_F\|_{L^2}\|g\|_{L^2}\|G\|_{L^{\infty}}
			\\
			&\le\frac{(2\gamma)\cdot (4\gamma)^2}{2} + \eps^{1/2}\cdot\delta\cdot 4\gamma \le 20\gamma^3.
		\end{align*}
		Therefore, the estimate for the second term in \eqref{eq: ffgg bound} is
		\begin{gather*}
			\left|\frac{F(t)}{2}\int_0^tF(t_3)g(t_3)G(t_3)\,dt_3\right|\le 40\gamma^4.
		\end{gather*}
		Similarly, for the third integral we get
		\begin{align*}
			\int_0^tF^2(t_3)g(t_3)G(t_3)\,dt_3 &= C_F^2\int_0^tg(t_3)G(t_3)\,dt_3 + \int_0^t(F(t_3)^2 - C_F^2)g(t_3)G(t_3)\,dt_3,
			\\
			\left|\int_0^tF^2(t_3)g(t_3)G(t_3)\,dt_3\right|&\le \frac{|C_F^2G(t)^2|}{2} + \|F^2 - C_F^2\|_{L^2}\|g\|_{L^2}\|G\|_{L^{\infty}}
			\\
			&\le\frac{(2\gamma)^2\cdot (4\gamma)^2}{2} + 6\eps^{1/2}\gamma\cdot\delta\cdot 4\gamma \le 56\gamma^4.
		\end{align*}
		Now the inequality $|(ffgg)_t|\le 160\gamma^4$ follows from \eqref{eq: ffgg bound}. The inequalities for $(fgg)_t$ and $(ffg)_t$ are less technical. We have
		\begin{align*}
			(fgg)_t &= \int_0^t f(t_1)\left[\int_0^{t_1}\int_0^{t_2}g(t_2)g(t_3)\,dt_3\,dt_2\right]\,dt_1 = \frac{1}{2}\int_0^t f(t_1)G(t_1)^2\,dt_1
			\\
			&=
			\frac{C_G^2}{2}\int_0^t f(t_1)\,dt_1 + \frac{1}{2}\int_0^t f(t_1)(G(t_1)^2 - C_G^2)\,dt_1,
			\\
			|(fgg)_t|&\le \frac{C_G^2|F(t)| + \|f\|_{L^2}\|G^2 - C_G^2\|_{L^2}}{2} \le \frac{(2\gamma)^2\cdot 4\gamma + \delta\cdot 6\eps^{1/2}\gamma}{2} 
			\le 11\gamma^3.
		\end{align*}
		For $(ffg)_t$ we write
		\begin{align*}
			(ffg)_t &= \int_0^t g(t_3)\left[\int_{t_3}^{t_1}\int_{t_3}^{t}f(t_1)f(t_2)\,dt_2\,dt_1\right]\,dt_3 = \frac{1}{2}\int_0^t g(t_3)(F(t) - F(t_3))^2\,dt_3
			\\
			&=
			\frac{F(t)^2}{2}\int_0^t g(t_3)\,dt_3 - F(t)\int_0^t g(t_3)F(t_3)\,dt_3 + \frac{1}{2}\int_0^t g(t_3)F(t_3)^2\,dt_3
			\\
			&=\frac{F(t)^2G(t)}{2} - F(t)\left[C_F\int_0^t g(t_3)\,dt_3 +\int_0^tg(t_3)(F(t_3) - C_F)\,dt_3\right] + (gff)_t.
		\end{align*}
		This gives $|(ffg)_t| =\frac{(4\gamma)^2\cdot 4\gamma}{2} + 4\gamma[2\gamma\cdot 4\gamma + \delta\eps^{1/2}] + 11\gamma^3\le 79\gamma^3$. 
	\end{proof}
	If $M = (m_1m_2)$ and $N = (n_1n_2)$ are $2\times 2$ matrices with the vector-columns $m_1, m_2$ and $n_1, n_2$ we let
	\begin{gather}\label{eq: mixed det formula}
		\det(M, N) = \det((m_1n_2)) + \det((n_1m_2)).
	\end{gather}
	Equivalently, we can write
	\begin{gather*}
		\det (M, N) = \det(M + N) - \det(M) - \det(N).
	\end{gather*}
	For arbitrary $2\times 2$ matrices $Z_1,\ldots, Z_n$ we have
	\begin{gather}\label{eq: det of a sum}
		\det(Z_1 + \ldots + Z_n) = \sum_{k = 1}^n \det (Z_k) + \sum_{k = 0}^n\sum_{l = k + 1}^n \det(Z_k, Z_l).
	\end{gather}
	Let $\|\cdot\|_2$ denote the Frobenius norm of $2\times 2$ matrix, i.e., the square root of the sum of squares of entries. The following inequalities hold:
	\begin{gather}\label{eq: frobenius norm ineqs}
		|\det(M)|\le \|M\|_2^2/2,\qquad |\det(M, N)|\le \|M\|_2 \|N\|_2, \qquad \|MN\|_2\le \|M\|_2\|N\|_2.
	\end{gather}
	Formula \eqref{explicit representaion ordered exponent} is a representation of $X_A$ as a sum of $2\times 2$ matrices. Below we will substitute it into \eqref{eq: def F_A} and \eqref{eq: det of a sum}, \eqref{eq: frobenius norm ineqs} will help us estimate the value of $F_A$.
	\subsection{Proof of the theorem \ref{thm: ordered exponential}}
	
	\begin{proof}[Proof of the theorem \ref{thm: ordered exponential}]
		Let matrices $M_k(t), N_k(t)$ and $L_k$ be defined by
		\begin{gather}\label{eq: M_n def}
			X_{sA}(t) = \sum_{k\ge 0} s^kM_k(t),
			\qquad X_{sA}(t)X_{sA}(t)^T = \sum_{k\ge 0} s^kN_k(t),\qquad L_k = \int_0^1N_k(t)\,dt.
		\end{gather}
		Formula \eqref{explicit representaion ordered exponent} allows us to write out $M_k$ in terms of $A$. We have $M_0(t) = \I$ and for $k\ge 1$
		\begin{gather}\label{eq: M_k integral representation}
			M_k(t) = \int_0^t A(t_1) \int_0^{t_1} A(t_2)\int_0^{t_2}\ldots \int_0^{t_{k - 1}} A(t_k)\, dt_k\ldots dt_3\,dt_2\, dt_1.
		\end{gather}
		The definition of $N_k$ implies
		\begin{gather}\label{eq: formula for N in terms of M}
			N_k(t) = \sum_{m = 0}^k M_{m}(t)M_{k - m}^T(t).
		\end{gather}
		From \eqref{eq: def F_A} and \eqref{eq: det of a sum} we see that 
		\begin{gather*}
			F_A(s) = \det\left(\sum_{k = 0}^{\infty}s^kL_k\right) =\sum_{k = 0}^{\infty}s^{2k}\det(L_k) + \sum_{k = 0}^{\infty}\sum_{l = k + 1}^{\infty}s^{k + l}\det(L_k, L_l).
		\end{gather*}
		If we regroup the terms so that this becomes the power series in $s$, we will get
		\begin{gather}\label{eq: an representation in terms of L}
			a_{2n} = \det(L_n) + \sum_{k = 0}^{n - 1}\det(L_k,L_{2n - k}),\quad n\ge 0.
		\end{gather}
		Let us show that for $n\ge 2$ the numbers $a_{2n}$  are small. Inequality \eqref{eq: frobenius norm ineqs} implies 
		\begin{gather}\label{eq: an bound in terms of L}
			|a_{2n}|\le \|L_n\|_2^2 + \sum_{k = 0}^{n - 1}\|L_k\|_2\cdot \|L_{2n - k}\|_2.
		\end{gather}
		Every entry of $M_k$, recall \eqref{eq: M_k integral representation}, is a sum of $2^{k -1}$ integrals of the form $\pm(f_1\ldots, f_k)_t$, where $f_i = p$ or $f_i = q$ for every $1\le i \le k$. 
		Hence, by Lemma \ref{lemma: long product bound}, every entry of $M_k$ does not exceed $2^{k - 1}(8\gamma)^m$, where $m = m(k) = [(k + 1)/2]\ge k/2$. For $k\ge 1$ this gives us the inequality 
		\begin{gather}\label{eq: bound for norm Mk}
			\|M_k\|_2\le 2^k (8\gamma)^m,\qquad m = m(k) = \left[(k + 1)/2\right].
		\end{gather}
		Now formula \eqref{eq: formula for N in terms of M} yields
		\begin{align*}
			\|N_k(t)\|_2\le \sum_{l = 0}^k\|M_l(t)\|_2\cdot \|M_{k - l}(t)\|_2 &\le 2^k \sum_{l = 0}^k (8\gamma)^{m(l) + m(k - l)} 
			\\
			&\le 2^k(k + 1)(8\gamma)^{m(k)}\le 2^{2k}(8\gamma)^{m(k)},
		\end{align*}
		where we used the simple inequality $m(l) + m(k - l)\ge m(k)$ and the assertion $8\gamma \le 1$ as $\eps, \delta\to 0$. For every $k\ge 0$ we have $\|L_k\|_2^2 \le \int_0^1\|N_k(t)\|_2^2\,dt$ hence
		\begin{gather}\label{eq: L_k bound}
			\|L_k\|_2 \le 2^{2k}(8\gamma)^{m(k)}.
		\end{gather}
		Substituting this into \eqref{eq: an bound in terms of L}, we get 
		\begin{gather*}
			|a_{2n}|\le 2^{4n}(8\gamma)^{2m(n)} + \sum_{k = 0}^{n - 1} 2^{4n}(8\gamma)^{m(k) + m(2n - k)}\le (n + 1)2^{7n}\gamma^n.
		\end{gather*}
		Therefore, we get $\sum_{n\ge 4}\left|a_{2n}\right|\le \sum_{n\ge 4} (n + 1)2^{7n}\gamma^n = O(\gamma^4) = o(\eps)$ as $\eps, \delta\to 0$, recall the definition of $\gamma$ given in Lemma \ref{Lemma 1}. 
		Lemma \ref{lemma: first coefficient general situation} states $a_2 = 4\eps$, hence to conclude the proof it is left to show $a_4= o(\eps)$ and $a_6 = o(\eps)$ as $\eps,\delta\to 0$. The estimate $a_{2n} = O(\gamma^n)$ for $n = 2,3$ gives $a_4 = O(\gamma^2)$ and $a_6 = O(\gamma^3)$ respectively, which is not strong enough. 
		For $n = 3$ it improves with more careful consideration of the terms in \eqref{eq: an representation in terms of L}. To deal with $n = 2$ we explicitly write out the representation of $a_4$ in terms of the functions $p$ and $q$, see \eqref{eq: final form of a4} below. The $a_4$ part is more technical so we proceed with the estimate of $a_6$. 
		Equation \eqref{eq: an representation in terms of L} for $n = 3$ becomes
		\begin{align}\label{eq: a_6 representation}
			a_6 = \det(L_3) + \det(L_1 , L_{5})+ 
			\det(L_2 , L_{4}) + \det(L_0 , L_{6}).
		\end{align}
		From \eqref{eq: L_k bound} and \eqref{eq: frobenius norm ineqs} we have
		\begin{gather}
			\nonumber
			|\det(L_3)|\le \|L_3\|_2^2 = O\left(\gamma^{2m(3)}\right) = O(\gamma^4) = o(\eps), \quad \eps,\delta\to 0,
			\\
			\label{eq: o(eps) bound for lk}
			\left|\det(L_1 , L_{5})\right|\le \|L_1\|_2\cdot \|L_5\|_2 = O\left(\gamma^{m(1) + m(5)}\right)  = O(\gamma^4) = o(\eps), \quad \eps,\delta\to 0.
		\end{gather}
		Furthermore, we have $L_0 = N_0 = \I$ hence
		\begin{gather}\label{eq: det L0 + L6}
			\det(L_0 , L_{6}) = \trace(L_6) = \int_0^1\trace(N_6(t))\,dt.
		\end{gather}
		Rewrite $N_6$ using formula \eqref{eq: formula for N in terms of M}:
		\begin{align}\label{eq: trace N_6}
			\trace(N_6)
			&= \trace\left(\sum_{k = 0}^6 M_kM^T_{6 - k}\right)
			\\
			\nonumber
			&= \trace(M_6 + M_6^T + M_2M_4^T + M_4M_2^T) + \trace(M_3 M_3^T + M_1M_5^T + M_5M_1^T).
		\end{align}
		Similarly to \eqref{eq: o(eps) bound for lk}, \eqref{eq: bound for norm Mk} implies
		\begin{gather}\label{eq: trace first part}
			\trace(M_3 M_3^T + M_1M_5^T + M_5M_1^T) = O(\gamma^4) = o(\eps), \quad \eps,\delta\to 0.
		\end{gather}
		Consider the matrix-valued function
		\begin{gather*}
			K(t_1, t_2) = A(t_1)A(t_2)= 
			\begin{pmatrix}
				p(t_1)p(t_2) +  q(t_1) q(t_2)  & p(t_1)q(t_2) - q(t_1)p(t_2)
				\\
				-p(t_1)q(t_2) + q(t_1)p(t_2) & p(t_1)p(t_2) +  q(t_1) q(t_2)
			\end{pmatrix}.
		\end{gather*}
		Formula \eqref{eq: M_k integral representation} for $ k= 2,4,6$ reads as
		\begin{gather*}
			M_2 = \int_0^t\int_0^{t_1} K(t_1,t_2)\,dt_2\,dt_1, \quad M_4 = \int_0^t\ldots\int_0^{t_3} K(t_1,t_2)K(t_3,t_4)\,dt_4\ldots\,dt_1,
			\\
			M_6 = \int_0^t\ldots\int_0^{t_5} K(t_1,t_2)K(t_3,t_4)K(t_5,t_6)\,dt_6\ldots\,dt_1.
		\end{gather*}
		Every entry of $M_6(t)$ is a sum of a $32$ integrals of the form $\pm(f_1\ldots f_6)_t$ where $f_k = p$ or $f_k = q$ for every $1\le k \le 6$; the entries of $M_2M_4^T$ are the similar sums of the terms $(f_1f_2)_t(f_3\ldots f_6)_t$. By Lemma \ref{lemma: long product bound}, if $f_1 = f_2$ or $f_3 = f_4$ or $f_5 = f_6$, then the corresponding term is $O(\gamma^4)= o(\eps)$ as $\eps,\delta\to 0$. Therefore
		\begin{gather*}
			M_6(t) = o(\eps) + \int_0^t\ldots\int_0^{t_5} 
			\tilde K(t_1,t_2)\tilde K(t_3,t_4)\tilde K(t_5,t_6)\,dt_6\ldots\,dt_1,
			\\
			M_2M_4^T =  o(\eps) + \int_0^t\int_0^{t_1} K(t_1,t_2)\,dt_2\,dt_1 \int_0^t\ldots\int_0^{t_3} K(t_1,t_2)K(t_3,t_4)\,dt_4\ldots\,dt_1,
		\end{gather*}
		where the matrix $\tilde K$ is defined by
		\begin{gather*}
			\tilde K(t_1, t_2) = 
			\begin{pmatrix}
				0  & p(t_1)q(t_2) - q(t_1)p(t_2)
				\\
				-p(t_1)q(t_2) + q(t_1)p(t_2) & 0
			\end{pmatrix}.
		\end{gather*}
		Notice that $\tilde K + \tilde K^T = 0$ hence the integrals in the equation above also satisfy similar property and 
		\begin{gather*}
			\trace(M_6 + M_6^T + M_2M_4^T + M_4M_2^T) = o(\eps),\quad\eps,\delta\to 0.
		\end{gather*}
		Substitution of this and \eqref{eq: trace first part} into \eqref{eq: trace N_6} gives $\trace(N_6) = o(\eps)$. Now \eqref{eq: det L0 + L6} and \eqref{eq: a_6 representation} imply
		\begin{gather}
			a_6 = \det(L_2 , L_{4}) + o(\eps), \quad \eps,\delta\to 0.
		\end{gather}
		We have $|\det(L_2 , L_{4})|\le \|L_2\|_2\cdot \|L_4\|_2$ and, by \eqref{eq: L_k bound}, $\|L_4\|_2 = O(\gamma^2)$. Thus the estimate $a_6 = o(\eps)$ will immediately follow from
		\begin{gather}\label{eq: L^2 estimate}
			\|L_2\|_2 = O(\gamma^2), \quad \eps,\delta\to 0.
		\end{gather}
		Let us calculate $L_2$. We have 
		\begin{gather}
			\label{eq: M_0, M_1, M_2 formula}
			M_0= \I,
			\quad
			M_1=
			\begin{pmatrix}
				-(q)_t   &  (p)_t 
				\\
				(p)_t   &  (q)_t 
			\end{pmatrix},
			\quad
			M_2=
			\begin{pmatrix}
				(pp)_t + (qq)_t &  (pq)_t - (qp)_t
				\\
				-(pq)_t + (qp)_t   &  (pp)_t + (qq)_t
			\end{pmatrix}.
		\end{gather}
		Formula \eqref{eq: formula for N in terms of M} for $n = 1,2,3$ becomes
		\begin{gather}
			\nonumber
			N_0 = \I,
			\quad
			N_1 = M_0M_1^T + M_1M_0^T = 2M_1,
			\\
			\nonumber
			M_0M_2^T = M_2^T
			,
			\quad 
			M_1M_1^T = ((p)_t^2 + (q)_t^2)\I = 2((pp)_t + (qq)_t)\I,
			\\
			\label{eq: N_0, N_1, N_2 formula}
			N_2 = M_0M_2^T + M_1M_1^T + M_2M_0^T =
			4((pp)_t + (qq)_t)\I.
		\end{gather}
		By Lemma \ref{lemma: long product bound} we know  $(pp)_t = O(\gamma^2)$ and $(qq)_t = O(\gamma^2)$, which implies \eqref{eq: L^2 estimate} and finishes the $a_6$ part.
		\medskip
		
		Calculation of $a_4$ is more technical. We will show that
		\begin{align}
			\nonumber
			a_4/16 &= 2\int_0^1(qqqq)_t\,dt - 2 \int_0^1(qqq)_t\,dt\cdot \int_0^1(q)_t\,dt + \left(\int_0^1(qq)_t\,dt\right)^2
			\\ \nonumber
			&+ 2\int_0^1(pppp)_t\,dt - 2 \int_0^1(ppp)_t\,dt\cdot \int_0^1(p)_t\,dt + \left(\int_0^1(pp)_t\,dt\right)^2
			\\ \nonumber
			&+ 2\int_0^1(qqpp)_t\,dt + 2\int_0^1(ppqq)_t\,dt + 2\int_0^1(qq)_t\,dt\cdot \int_0^1(pp)_t\,dt
			\\ \label{eq: final form of a4}
			&-2\int_0^1(q)_t\,dt\cdot \int_0^1(qpp)_t\,dt - 2\int_0^1(p)_t\,dt\cdot \int_0^1(pqq)_t\,dt.
		\end{align}
		Applications of Lemmas \ref{lemma: long product bound} and \ref{lemma: short product integrals} to all the integrals immediately give the bound $a_4 = o(\eps)$. Thus to finish the proof of the theorem we need to establish \eqref{eq: final form of a4}. 
		Formula \eqref{eq: an representation in terms of L} for $n = 2 $ reads as 
		\begin{align}
			\label{eq: a4 in terms of L}
			a_4 = \det(L_2) + \det(L_1 , L_{3}) + \det(L_0 , L_{4}).
		\end{align}
		Let us calculate matrices $L_0,\ldots, L_4$. We calculated matrices $M_0, M_1$ and $M_2$ previously in \eqref{eq: M_0, M_1, M_2 formula}. We have 
		\begin{gather*}
			M_3=
			\begin{pmatrix}
				-(qqq)_t - (qpp)_t + (pqp)_t - (ppq)_t &  (qqp)_t  -(qpq)_t + (pqq)_t + (ppp)_t
				\\
				(qqp)_t  -(qpq)_t + (pqq)_t + (ppp)_t   &  (qqq)_t + (qpp)_t - (pqp)_t + (ppq)_t
			\end{pmatrix}.
		\end{gather*}
		The matrix $M_4(t)$ is of the form $\smatrix{x}{y}{-y}{x}$, its first column is given by
		\begin{gather*}
			\begin{pmatrix}
				(qqqq)_t + (qqpp)_t - (qpqp)_t + (qppq)_t + (pqqp)_t - (pqpq)_t + (ppqq)_t + (pppp)_t  &  \ldots
				\\
				-(qqqp)_t + (qqpq)_t - (qpqq)_t - (qppp)_t + (pqqq)_t + (pqpp)_t - (ppqp)_t + (pppq)_t  &  \ldots
			\end{pmatrix}.
		\end{gather*}
		Matrices $N_0, N_1$ and $N_2$ are calculated in \eqref{eq: N_0, N_1, N_2 formula}. Let us write out $N_3, N_4$ using the relation \eqref{eq: formula for N in terms of M}. We have $M_0M_3^T = M_3^T$; $M_1M_2^T$ is of the form $\smatrix{x}{y}{y}{-x}$ and 
		\begin{gather*}
			M_1M_2^T = 
			\begin{pmatrix}
				-(q)_t\cdot (qq)_t - (q)_t\cdot (pp)_t - (p)_t\cdot (qp)_t + (p)_t\cdot (pq)_t & \ldots 
				\\
				(p)_t\cdot (qq)_t + (p)_t\cdot (pp)_t -(q)_t\cdot (qp)_t + (q)_t\cdot (pq)_t   & \ldots
			\end{pmatrix}.    
		\end{gather*}
		From the definition \eqref{eq: short integral def} of $(\ldots)_t$ we see that
		\begin{gather}\label{eq: product of t integrals}
			(f)_t\cdot (g_1g_2\ldots g_n)_t = (fg_1g_2\ldots g_n)_t + (g_1fg_2\ldots g_n)_t + \ldots + (g_1g_2\ldots g_nf)_t.  
		\end{gather}
		Therefore the expression for $M_1M_2^T$ rewrites as 
		\begin{gather*}
			\begin{pmatrix}
				-3(qqq)_t - [(qpp)_t  + (pqp)_t+ (ppq)_t] - [(pqp)_t+ 2(qpp)_t] + [2(ppq)_t + (pqp)_t]& \ldots 
				\\
				[(pqq)_t + (qpq)_t + (qqp)_t] + 3(ppp)_t - [2(qqp)_t + (qpq)_t]  + [(qpq)_t + 2(pqq)_t] &  \ldots
			\end{pmatrix}
			\\
			= \begin{pmatrix}
				-3(qqq)_t   - 3(qpp)_t + (ppq)_t - (pqp)_t& \ldots 
				\\
				3(pqq)_t + 3(ppp)_t - (qqp)_t + (qpq)_t  &\ldots
			\end{pmatrix}.
		\end{gather*}
		It follows that 
		\begin{gather*}
			N_3 = M_3 + M_3^T + M_1M_2^T + M_2M_1^T=
			\begin{pmatrix}
				-8(qqq)_t   - 8(ppq)_t& 8(qqp)_t + 8(ppp)_t
				\\
				8(qqp)_t + 8(ppp)_t & 8(qqq)_t   + 8(ppq)_t
			\end{pmatrix}.
		\end{gather*}
		Similarly to \eqref{eq: det L0 + L6} we have
		\begin{gather}\label{eq: det L0L4}
			\det(L_0 , L_{4}) = \trace(L_4) = \int_0^t\trace(N_4(t))\,dt.
		\end{gather}
		Formula \eqref{eq: formula for N in terms of M} for $n = 4$ is
		\begin{gather}\label{eq: formula N4}
			N_4 = M_4 + M_4^T + M_1M_3^T + M_3M_1^T + M_2M_2^T.
		\end{gather}
		Notice that all of the matrices are of the form $\smatrix{x}{y}{-y}{x}$ hence to find the trace we only need to calculate the upper-left element. For $M_1M_3^T$ it equals
		\begin{gather*}
			-(q)_t \left(-(qqq)_t - (qpp)_t + (pqp)_t - (ppq)_t \right) + (p)_t\left((qqp)_t  -(qpq)_t + (pqq)_t + (ppp)_t\right).
		\end{gather*}
		By formula \eqref{eq: product of t integrals} it rewrites as
		\begin{align*}
			4(qqqq)_t &+ [2(qqpp)_t + (qpqp)_t + (qppq)_t] 
			\\
			&- [(qpqp)_t + 2(pqqp)_t + (pqpq)_t] 
			+[(qppq)_t + (pqpq)_t + 2(ppqq)_t]
			\\
			&+ [(pqqp)_t + (qpqp)_t + 2(qqpp)_t] - [(pqpq)_t + 2(qppq)_t + (qpqp)_t] 
			\\
			&+ [2(ppqq)_t + (pqpq)_t + (pqqp)_t] + 4(pppp)_t
			\\
			&= 4(qqqq)_t + 4(qqpp)_t + 4(ppqq)_t + 4(pppp)_t.
		\end{align*}
		Further, for $M_2M_2^T$ we get
		\begin{align*}
			((qq)_t + (pp)_t)^2 &+ ((pq)_t - (qp)_t)^2 
			\\
			&= (qq)_t^2 + 2(qq)_t(pp)_t + (pp)_t^2 + (pq)_t^2 + (qp)_t^2 - 2(pq)_t(qp)_t .
		\end{align*}
		Similarly to \eqref{eq: product of t integrals} we notice that
		\begin{align*}
			(f_1f_2)_t(g_1g_2)_t = (f_1f_2g_1g_2)_t+ (f_1g_1f_2g_2)_t &+ (f_1g_1g_2f_2)_t + (g_1f_1f_2g_2)_t 
			\\
			&+ (g_1f_1g_2f_2)_t + (g_1g_2f_1f_2)_t.
		\end{align*}
		Hence the expression for $M_2M_2^T$ takes the form
		\begin{gather*}
			6(qqqq)_t + 2[(qqpp)_t + (qpqp)_t + (qppq)_t + (pqqp)_t + (pqpq)_t + (ppqq)_t] 
			+ 6(pppp)_t
			\\
			+[2(qpqp)_t + 4(qqpp)_t] + [2(pqpq)_t + 4(ppqq)_t]
			\\
			- 2[2(qppq)_t + (qpqp)_t + (pqpq)_t + 2(pqqp)_t] 
			\\
			= 6(qqqq)_t + 6(qqpp)_t + 2(qpqp)_t - 2(qppq)_t - 2(pqqp)_t + 2 (pqpq)_t + 6(ppqq)_t + 6(pppp)_t.
		\end{gather*}
		When we sum $M_4 + M_2M_2^T + M_4^T$ the terms $(qpqp)_t, 2(qppq)_t, 2(pqqp)_t ,2 (pqpq)_t$ cancel out hence by \eqref{eq: formula N4} and \eqref{eq: det L0L4} we get $N_4 = 16((qqqq)_t + (qqpp)_t + (ppqq)_t + (pppp)_t)\I$ and
		\begin{gather}
			\label{eq: a_4 part L_0, L_4}
			\det(L_0 , L_{4}) = 32\int_0^1[(qqqq)_t + (qqpp)_t + (ppqq)_t + (pppp)_t]\,dt.
		\end{gather}
		Next, we write
		\begin{align}
			\nonumber
			\det L_2 &= \det\left(\int_0^1N_2(t)\,dt\right)
			= \det\left(4\int_0^1((qq)_t + (pp)_t)\I\,dt\right)
			\\
			\nonumber
			&=16\left(\int_0^1 (qq)_t\,dt + \int_0^1 (pp)_t\,dt\right)^2 
			\\
			\label{eq: a_4 part L_2}
			&= 16 \left(\int_0^1 (qq)_t\,dt\right)^2 + 16 \left(\int_0^1 (pp)_t\,dt\right)^2 + 32 \int_0^1 (qq)_t\,dt\int_0^1 (pp)_t\,dt.
		\end{align}
		The final part is $\det(L_1 , L_{3})$ which we estimate via \eqref{eq: mixed det formula}:
		\begin{gather*}
			\det(L_1 , L_{3}) =\det
			\begin{pmatrix}
				-2\int_0^1(q)_t\,dt   & 8\int_0^1 (qqp)_t + (ppp)_t  \,dt
				\\
				2\int_0^1(p)_t\,dt   &  8\int_0^1 (qqq)_t  + (ppq)_t\,dt
			\end{pmatrix}
			\\
			+
			\det
			\begin{pmatrix}
				-8\int_0^1 (qqq)_t  + (ppq)_t\,dt& 2\int_0^1(p)_t\,dt
				\\
				8\int_0^1 (qqp)_t + (ppp)_t \,dt& 2\int_0^1(q)_t \,dt
			\end{pmatrix}
			\\
			= -32\int_0^1(q)_t\,dt\cdot \int_0^1((qqq)_t + (ppq)_t)\,dt - 32\int_0^1(p)_t\,dt\cdot \int_0^1((ppp)_t + (qqp)_t)\,dt.
		\end{gather*}
		Formula \eqref{eq: final form of a4} follows by substituting the last equality, \eqref{eq: a_4 part L_0, L_4} and \eqref{eq: a_4 part L_2} into \eqref{eq: a4 in terms of L}. The proof of the theorem is concluded.
	\end{proof}
	
	\bibliographystyle{plain}
	\bibliography{ref}

\begin{thebibliography}{10}

\bibitem{Baranov2015DeBF}
A.~Baranov, Y.~Belov, and A.~Poltoratski.
\newblock De {B}ranges functions of {S}chroedinger equations.
\newblock {\em Collect. Math.}, 68(2):251--263, 2017.

\bibitem{BESSONOV2020106851}
R.~Bessonov and S.~Denisov.
\newblock A spectral {S}zeg{\H o} theorem on the real line.
\newblock {\em Adv. Math.}, 359:106851, 41, 2020.

\bibitem{Bessonov2021}
R.~Bessonov and S.~Denisov.
\newblock De {B}ranges canonical systems with finite logarithmic integral.
\newblock {\em Anal. PDE}, 14(5):1509--1556, 2021.

\bibitem{Bessonov20221}
R.~Bessonov and S.~Denisov.
\newblock Sobolev norms of ${L}^2$-solutions to {NLS}.
\newblock {\em arXiv:2211.07051}, 2022.

\bibitem{Bessonov2022}
R.~Bessonov and S.~Denisov.
\newblock Szeg{\H o} condition, scattering, and vibration of {K}rein strings.
\newblock {\em Invent. Math.}, 234(1):291--373, 2023.

\bibitem{Damanik2006}
D.~Damanik and B.~Simon.
\newblock Jost functions and {J}ost solutions for {J}acobi matrices. {II}.
  {D}ecay and analyticity.
\newblock {\em Int. Math. Res. Not.}, pages Art. ID 19396, 32, 2006.

\bibitem{Denisov2003}
S.~Denisov.
\newblock On the continuous analog of {R}akhmanov's theorem for orthogonal
  polynomials.
\newblock {\em J. Funct. Anal.}, 198(2):465--480, 2003.

\bibitem{Denisov2006}
S.~Denisov.
\newblock Continuous analogs of polynomials orthogonal on the unit circle and
  {K}re\u{\i}n systems.
\newblock {\em IMRS Int. Math. Res. Surv.}, pages Art. ID 54517, 148, 2006.

\bibitem{Dyatlov2019}
S.~Dyatlov and M.~Zworski.
\newblock {\em Mathematical theory of scattering resonances}, volume 200 of
  {\em Graduate Studies in Mathematics}.
\newblock American Mathematical Society, Providence, RI, 2019.

\bibitem{Froese1997AsymptoticDO}
R.~Froese.
\newblock Asymptotic distribution of resonances in one dimension.
\newblock {\em J. Differential Equations}, 137(2):251--272, 1997.

\bibitem{Garnett}
J.~Garnett.
\newblock {\em Bounded analytic functions}, volume~96 of {\em Pure and Applied
  Mathematics}.
\newblock Academic Press, New York-London, 1981.

\bibitem{Gubkin2021}
P.~Gubkin.
\newblock Mate-{N}evai-{T}otik theorem for {K}rein systems.
\newblock {\em Integral Equations Operator Theory}, 93(3):Paper No. 33, 24,
  2021.

\bibitem{Gub2024}
P.~Gubkin.
\newblock Dirac operators with exponentially decaying entropy.
\newblock {\em Constructive Approximation}, 2024.

\bibitem{Hitrik1999BoundsOS}
M.~Hitrik.
\newblock Bounds on scattering poles in one dimension.
\newblock {\em Comm. Math. Phys.}, 208(2):381--411, 1999.

\bibitem{Korotyaev2021}
E.~Korotyaev and D.~Mokeev.
\newblock Inverse resonance scattering for {D}irac operators on the half-line.
\newblock {\em Anal. Math. Phys.}, 11(1):Paper No. 32, 26, 2021.

\bibitem{Krein}
M.~Kre{i}n.
\newblock Continuous analogues of propositions on polynomials orthogonal on the
  unit circle.
\newblock {\em Dokl. Akad. Nauk SSSR (N.S.)}, 105:637--640, 1955.

\bibitem{Levin}
B.~Levin.
\newblock {\em Lectures on entire functions}, volume 150 of {\em Translations
  of Mathematical Monographs}.
\newblock American Mathematical Society, Providence, RI, 1996.

\bibitem{Matveev1972}
V.~Matveev and M.~Skriganov.
\newblock Wave operators for a {S}chr\"{o}dinger equation with rapidly
  oscillating potential.
\newblock {\em Dokl. Akad. Nauk SSSR}, 202:755--757, 1972.

\bibitem{Nevai1989}
P.~Nevai and V.~Totik.
\newblock Orthogonal polynomials and their zeros.
\newblock {\em Acta Sci. Math. (Szeged)}, 53(1-2):99--104, 1989.

\bibitem{Reed1975MethodsOM}
M.~Reed and B.~Simon.
\newblock {\em Methods of modern mathematical physics. {II}. {F}ourier
  analysis, self-adjointness}.
\newblock Academic Press, New York-London, 1975.

\bibitem{ReedSimon3}
M.~Reed and B.~Simon.
\newblock {\em Methods of modern mathematical physics. {III}. Scattering
  theory}.
\newblock Academic Press, New York-London, 1979.
\newblock Scattering theory.

\bibitem{Sasaki2007}
I.~Sasaki.
\newblock Schr\"{o}dinger operators with rapidly oscillating potentials.
\newblock {\em Integral Equations Operator Theory}, 58(4):563--571, 2007.

\bibitem{Simon}
B.~Simon.
\newblock {\em Orthogonal polynomials on the unit circle. {P}art 1}, volume~54
  of {\em American Mathematical Society Colloquium Publications}.
\newblock American Mathematical Society, Providence, RI, 2005.
\newblock Classical theory.

\bibitem{Simon2006}
B.~Simon.
\newblock Fine structure of the zeros of orthogonal polynomials. {I}. {A} tale
  of two pictures.
\newblock {\em Electron. Trans. Numer. Anal.}, 25:328--368, 2006.

\bibitem{Skriganov1973}
M.~Skriganov.
\newblock The spectrum of a {S}chr\"{o}dinger operator with rapidly oscillating
  potential.
\newblock {\em Trudy Mat. Inst. Steklov.}, 125:187--195, 235, 1973.
\newblock Boundary value problems of mathematical physics, 8.

\bibitem{Teplyaev2005}
A.~Teplyaev.
\newblock A note on the theorems of {M}. {G}. {K}rein and {L}. {A}.
  {S}akhnovich on continuous analogs of orthogonal polynomials on the circle.
\newblock {\em J. Funct. Anal.}, 226(2):257--280, 2005.

\end{thebibliography}
	
\end{document}